\newtheorem{theorem}{Theorem}[section]
\newtheorem{lemma}[theorem]{Lemma}
\newtheorem{corollary}[theorem]{Corollary}
\newtheorem{proposition}[theorem]{Proposition}
\theoremstyle{definition}
\newtheorem{definition}[theorem]{Definition}
\newtheorem{remark}[theorem]{Remark}
\newtheorem{claim}[theorem]{Claim}
\newcommand{\Aut}{\hbox{\rm Aut}}
\newcommand{\inv}{^{-1}}
\renewcommand{\mod}{\hbox{mod}\, }
\newcommand{\lin}{\overline}
\newcommand{\NN}{\mathbb{N}}
\newcommand{\ZZ}{\mathbb{Z}}
\newcommand{\Cay}{\hbox{\rm Cay}}
\newcommand{\core}{\hbox{\rm core}}
\newcommand{\Cyc}{\hbox{\rm CycCov}}
\newcommand{\GC}{\hbox{\rm GenCov}}
\newcommand{\fib}{{\rm{fib}}}
\DeclareMathOperator{\lcm}{{\rm{lcm}}}
\renewcommand{\inv}{\mathop{{\rm inv}}}
\DeclareMathOperator{\Inv}{{\rm inv}}
\DeclareMathOperator{\beg}{{\rm beg}}
\DeclareMathOperator{\term}{{\rm end}}
\DeclareMathOperator{\val}{{\rm val}}
\newcommand{\ccv}{\rm ccv}
\newcommand{\D}{{\rm D}}
\newcommand{\V}{{\rm V}}
\renewcommand{\mod}{\hbox{\rm{mod }}}
\numberwithin{equation}{section}
\numberwithin{figure}{section}
\tikzstyle{vertu}=[circle,draw=black!,fill=black!,thick]
\tikzstyle{vertv}=[circle,draw=black!,fill=white!,thick]
\tikzstyle{blank}=[circle,draw=white!,fill=white!,thick,inner sep = 0.11mm]
\tikzstyle{post}=[->,shorten >=1pt,>=latex,thick]
\tikzstyle{dentro}=[<-,>=latex,thick]
\title[Cubic vertex-transitive graphs]{Finite cubic graphs admitting a cyclic group of automorphism with at most three orbits on vertices}
\author{Primo\v{z} Poto\v{c}nik}
\author{Micael Toledo}
\address{Primo\v{z} Poto\v{c}nik, Faculty of Mathematics and Physics, University of Ljubljana, Jadranska 21, SI-1000 Ljubljana, Slovenia.\newline
\indent Also affiliated with: Institute of Mathematics, Physics and Mechanics, Jadranska 19, SI-1000 Ljubljana, Slovenia.
}
\email{primoz.potocnik@fmf.uni-lj.si}
\address{Micael Toledo, Institute of Mathematics, Physics and Mechanics, Jadranska 19, SI-1000.\newline
 Also affiliated with: University of Primorska, Faculty of Mathematics, Natural Sciences and Information Technologies, Glagolja\v{s}ka 8, SI-6000 Koper, Slovenia.}
\email{micael.toledo@imfm.uni-lj.si}
\thanks{The authors gratefully acknowledge support of the Slovenian Research Agency: Core Programme P1-0294, Research Project J1-1691 and the Young Researcher Scholarship programme.}
\begin{document}

\begin{abstract}
The theory of voltage graphs has become a standard tool in the study graphs admitting a semiregular group of automorphisms. We introduce the notion of a cyclic generalised voltage graph to extend the scope of this theory to graphs admitting a cyclic group of automorphism that may not be semiregular. We use this new tool to classify all cubic graphs admitting a cyclic group of automorphisms with at most three vertex-orbits and we characterise vertex-transitivity for each of these classes. In particular, we show that a cubic vertex-transitive graph admitting a cyclic group of automorphisms with at most three orbits on vertices either belongs to one of $5$ infinite families or is isomorphic to the well-know Tutte-Coxeter graph.
\end{abstract}
\maketitle

\section{Introduction}
\label{sec:intro}

Graphs $\Gamma$ whose automorphism group $\Aut(\Gamma)$ contains a cyclic subgroup $G$ with a small number of orbits on the vertex-set $\V(\Gamma)$ exhibit many interesting features and have been the focus of study in many settings. For example,
when the cyclic group $G$ has a single orbit on $\V(\Gamma)$, then the graph belongs to the family of circulant, a natural and widely studied class of graphs (see \cite{circ0} for an early mention of circulants in a famous research problem of \'Ad\'am, and \cite{circ1,circ2,circ3,circ4} for some of the most recent papers). When $G$ partitions
$\V(\Gamma)$ into $k$ orbits of equal size, the graph is then called a $k$-multicirculant. A $1$-, $2$- or $3$-multicirculant is often also called a circulant, bicirculant or tricirculant, respectively.
When studying $k$-multicirculants, typically additional graph theoretical and symmetry conditions are imposed, such as vertex-transitivity, edge-transitivity or arc-transitivity
(see \cite{metacirc,multicirc1,multicirc2,bicirc,multicirc3,multicirc4,bic,MPtricirc}, for example).
Multicirculants also feature in the famous {\em multicirculant conjecture} \cite{poly2,poly1,cubicpoly2} which states that
every finite vertex-transitive graph is a $k$-multicirculant for some $k$ smaller than the number of vertices of the graph.

The case where the orbits of $G$ are not all of equal size is far less studied and understood, which is partly due to the lack of appropriate tools to study such graphs: while 
$k$-multicirculant can be (and usually are) studied using the theory of regular covering projections (see, for example, \cite{grosstuc,lift,Sir}), 
no analogous theory  for the case of $G$-orbits of unequal size existed until recently. 

This paper has two main objectives. 
One objective is to adapt the recently developed theory of generalised covers of graphs \cite{MPgc} to the situation where a graph possesses a non-trivial cyclic group of automorphisms (possibly with orbits of unequal size).

The second objective is to use this theory in order to determine all the finite connected
cubic graphs $\Gamma$ admitting a cyclic automorphism group $G$ with at most
three vertex-orbits.
We classify these graphs into 25 families, each family arising from one of 25 possible
quotients of $\Gamma$ by $G$; see Theorem~\ref{the:main}. We then determine which of the graphs in these
families are vertex-transitive and prove the following theorem (the graphs appearing in the statement are defined in Section~\ref{sec:classification}): 

\begin{theorem}
\label{the:vt}
Let $\Gamma$ be a connected, simple, cubic  graph admitting a cyclic group of automorphisms having at most $3$ orbits on the vertices of $\Gamma$. Then $\Gamma$ is vertex-transitive if and only if it is isomorphic to the Tutte-Coxeter graph $\Gamma_{25}(10;1,3)$ or belongs to one of the following five infinite families:
\begin{enumerate}
\item $\Gamma_1(m;r)$ with $m$ even, $m \geq 4$, $\gcd(\frac{m}{2},r)=1$, $r \in \{1,2\}$;
\item $\Gamma_2(m;r,1)$ with $m \geq 3$, $r^2 \equiv \pm 1$ $(\mod m)$, or $m = 10$ and $r=2$;
\item $\Gamma_4(m;r,s)$ with $m \geq 3$, $r \neq s$, $\gcd(m,r,s)=1$;
\item $\Gamma_{22}(m;2,1)$ with $m \geq 4$ and $\frac{m}{2}$ odd;
\item $\Gamma_{23}(m;r,1)$ with $\frac{m}{2} \equiv 1$ $(\mod 4)$ and $r = (\frac{m}{2}+3)/2$,  or $\frac{m}{2} \equiv 3$ $(\mod 4)$ and $r = (\frac{3m}{2}+3)/2$, or $m=4$ and $r=0$.
\end{enumerate} 
\end{theorem}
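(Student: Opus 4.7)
The approach rests on the $25$-family classification already established in Theorem~\ref{the:main}: every cubic graph $\Gamma$ admitting a cyclic group of automorphisms with at most three vertex-orbits is a derived graph of one of the $25$ cyclic generalised voltage graphs (CGVGs) identified there, and hence lies in one of the families $\Gamma_i(m;\cdot)$ with $i \in \{1,\ldots,25\}$. The plan is to examine each family in turn and determine precisely those parameter values for which the derived graph is vertex-transitive.

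The guiding principle is this. If $G \le \Aut(\Gamma)$ is the cyclic group coming from the CGVG construction, then the $G$-orbits on $V(\Gamma)$ refine the orbit partition of $\Aut(\Gamma)$; vertex-transitivity therefore requires automorphisms of $\Gamma$ that merge distinct $G$-orbits. Such an automorphism $\varphi$ either normalises $G$, in which case it projects to an automorphism of the quotient CGVG that is a \emph{lift} of some voltage-preserving (up to gauge) symmetry, or it does not, in which case $\Gamma$ has another cyclic group $G'$ of automorphisms with at most three orbits and so appears again as the derived graph of a different entry in the list of $25$ CGVGs. The possible cross-identifications between the families can be tabulated once and then used to reduce the search for $\varphi$ to the first case. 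Within that first case, the voltage-lifting framework developed earlier in the paper converts the existence of a vertex-mixing lift into a collection of explicit congruences on the parameters $m,r,s,\ldots$.

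Concretely, for each family $\Gamma_i(m;\cdot)$ I would carry out the following four steps: (i) record the $G$-orbits on $V(\Gamma)$ predicted by the CGVG, together with their sizes; (ii) enumerate the automorphisms of the underlying quotient multigraph that preserve the voltage structure up to the standard equivalence; (iii) for each such candidate quotient-automorphism, extract the lifting conditions and decide when the resulting lift actually mixes $G$-orbits; (iv) collect the admissible parameter values and compare with other families to avoid over-counting. In most of the $25$ families this analysis returns no vertex-transitive examples at all, either because the $G$-orbits have pairwise distinct sizes and no cross-family identification exists, or because the lifting congruences are unsolvable. In the surviving cases one recovers exactly the conditions listed in items (1)--(5) of the statement.

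The main obstacle is the exhaustive bookkeeping together with the detection of sporadic parameter values where unexpected symmetries appear. Generic $m$ admits only the ``obvious'' quotient-automorphisms, but for small or highly divisible $m$ extra symmetries of the CGVG can emerge; these have to be found by hand and their lifts checked. The most delicate instances are the exceptional $(m,r)=(10,2)$ in family $\Gamma_2$ and the sporadic entry $\Gamma_{25}(10;1,3)$, which is the Tutte--Coxeter graph and arises because the CGVG of family $25$ at $m=10$ acquires additional voltage-preserving symmetries that do not survive for any other $m$. A secondary, more routine difficulty is disposing of the degenerate parameter choices that would make the derived graph disconnected or non-simple (for example $\gcd(m/2,r)=1$ in (1), $r \neq s$ in (3), and the parity conditions in (4) and (5)); these conditions emerge naturally from the connectedness criterion for CGVGs proved earlier in the paper and are incorporated into the final parameter ranges.
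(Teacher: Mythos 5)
Your overall frame (reduce to the $25$ families of Theorem~\ref{the:main} and determine, family by family, the parameters giving vertex-transitive covers) agrees with the paper's, but the engine you propose for the per-family analysis has a genuine gap. Deciding vertex-transitivity by enumerating voltage-preserving automorphisms of the quotient and checking which ones lift only detects automorphisms of $\Gamma$ lying in the normaliser $N_{\Aut(\Gamma)}(G)$; a graph can be vertex-transitive even though this normaliser is intransitive, because $\Aut(\Gamma)$ may be far larger than anything visible from the quotient. This is not a corner case here: the dodecahedron ${\rm GP}(10,2)=\Gamma_2(10;2,1)$ has $r^2\not\equiv\pm1\pmod{10}$, so the ``standard'' orbit-mixing lift does not exist, yet the graph is vertex-transitive via $A_5\times\ZZ_2$ --- exactly the sporadic entry in item (2). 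Your proposed repair via ``cross-identifications'' does not close the gap: if $\varphi\notin N_{\Aut(\Gamma)}(G)$ then $G^\varphi$ is conjugate to $G$ and yields an isomorphic labelled quotient, i.e.\ the \emph{same} entry among the $25$, so no new family appears and no new information is gained. More importantly, the hard direction is proving the \emph{non-existence} of extra automorphisms outside the normaliser for all but finitely many parameters, and the lifting framework gives no handle on that; saying the sporadic cases ``have to be found by hand'' is precisely the part that needs a method.

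The paper supplies two substitutes that your proposal lacks. First, for the eight families whose covers are genuine $k$-multicirculants ($i\in\{1,2,3,4,22,23,24,25\}$) it does not re-derive anything: it invokes the existing classifications of cubic vertex-transitive bicirculants and tricirculants, which already account for the full automorphism group. Second, for the remaining seventeen families it argues combinatorially about $\Aut(\Gamma)$ itself: vertex-transitivity forces cycle-regularity, the covering structure forces short cycles through specific edges (Lemmas~\ref{lem:loop} and~\ref{lem:cycles}), and then the classifications of cubic girth-regular and arc-transitive graphs of girth at most $6$ (Lemmas~\ref{lem:girthregular}, \ref{lem:smallgirth1}, \ref{lem:smallgirth2}) pin the graph down to a short list of exceptional graphs or rule it out entirely (Claims~\ref{cl:ex} and~\ref{cl:noVT}). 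Without either of these ingredients --- or some other way to bound the full automorphism group rather than just the normaliser of $G$ --- the proposed proof cannot be completed.
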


\begin{remark}
We would like to point out that all the graphs appearing in Theorem~\ref{the:vt} are
in fact $k$-multicirculants for some $k\in \{1,2,3\}$. More precisely,
the Tutte-Coxeter graph $\Gamma_{25}(10,1,3)$ is the smallest $5$-arc-transitive cubic graph, whose automorphism group is isomorphic to $\Aut(S_6)$. It is a tricirculant, but not a bicirculant or a circulant.
Further, the graphs $\Gamma_1(m;r)$ from part (1) of the theorem are the circulants $\Cay(\ZZ_m; \{r,-r,m/2\})$ with $m$ even. The graphs
$\Gamma_2(m;r,1)$ from part (2) are the generalised Petersen graphs ${\rm GP}(m,r)$, which are all bicirculants, some of them even circulants. The graphs $\Gamma_4(m;r,s)$ from part (3) are bicirculants, in fact the cubic cyclic Haar graphs \cite{haar}, isomorphic to the Cayley graph $\Cay(D_m;\{\tau,\rho^r\tau,\rho^s\tau\})$ where $D_m = \langle \rho,\tau \mid \rho^m, \tau^2, (\rho\tau)^2\rangle$ is the dihedral group of order $2m$. The graphs $\Gamma_{22}(m;2,1)$ from part (4) are the tricirculants that were denoted
$Y(m)$ in \cite[Section 5]{MPtricirc}; they are skeletons of maps on the torus with faces of length $6$ (see \cite[Section 5]{MPtricirc} for more details). The graphs $\Gamma_{23}(m;r,1)$ are also tricirculants, denoted $X(m)$ in \cite[Section 4]{MPtricirc}. The graphs $Y(m)$ and $X(m)$ are bicirculants if and only if $m$ is not divisible by $3$.
\end{remark}




In Section \ref{sec:cgc}, we introduce the notion of a cyclic generalised voltage graph $(\Delta,\lambda,\iota,\zeta)$, where $\lambda$, $\iota$ and $\zeta$ are integer-valued functions. 
Cyclic generalised voltage graphs are used both to define as well as 
analyse the connected, simple, cubic graphs admitting a cyclic group of automorphisms with at most three vertex-orbits. A complete classification of these graphs is stated in Section~\ref{sec:classification}.

In Section \ref{sec:gencov}, we recall the definition of a generalised voltage graph, introduced in \cite{MPgc}, and then in Section~\ref{sec:cyccov} we show that the cyclic generalised voltage graphs defined in Section~\ref{sec:cyccov}, are indeed a special case of the these. We also provide translations of some of the results about generalised voltage graph proved in \cite{MPgc} to the much simpler language of cyclic generalised voltage graph.

Using the results proved in Section~\ref{sec:cyccov}, we characterise those cyclic generalised voltage graphs that yield connected, cubic, simple covers in Section \ref{sec:ccv}. We call this class of cyclic generalised voltage graphs ccv-graphs. Using this concept we provide the proof of the classification result states in Section~\ref{sec:classification}.

The last section of this paper is devoted to determining which of the $25$ aforementioned classes admit infinitely many vertex-transitive covers and thus proving Theorem \ref{the:vt}.

\section{Concerning graphs}

Even though we are mainly interested in simple graphs, the treatment of quotients of graphs requires
a more general definition of a graph, allowing multiple edges, loops and semi-edges. This definition
has now become a standard when covers of graphs are considered; see for example \cite{lift,elabcov}.

A {\em graph} is a quadruple $(V,D,\beg,\inv)$ where $V$ is a non-empty set of {\em vertices}, $D$ is a set of {\em darts} (also known as {\em arcs} in the literature), $\beg\colon D \to V$ is a function assigning to each dart $x$ its {\em initial vertex} $\beg x$, and
$\inv\colon D \to D$ is a function assigning to each dart $x$ its {\em inverse dart} $\inv x$ (also denoted $x^{-1}$ when there is no danger for  ambiguity) satisfying 
$\inv\inv x =x$ 
 for every dart $x$.
 If $\Gamma = (V,D,\beg,\inv)$ is a graph, then we let
$\V(\Gamma):=V$, $\D(\Gamma) := D$, $\beg_\Gamma:=\beg$ and $\inv_\Gamma:=\inv$. 

 For $x\in D$, we call the vertex $\beg x^{-1}$ 
the {\em end} of $x$ and denote it by $\term x$.
 Two darts $x$ and $y$ are {\em parallel} if $\beg x = \beg y$ and $\term x = \term y$. 
An {\em edge} of a graph $\Gamma$ is a pair $\{x,x^{-1}\}$ where $x$ is a dart of $\Gamma$.
The vertices $\beg x$ and $\beg x^{-1}$ are then called the {\em endvertices} of the edge. 
An edge $\{x,x^{-1}\}$ is a {\em semi-edge} if $x=x^{-1}$ and is a {\em loop} if $x \neq x^{-1}$ but $\beg x = \term x$.

Two distinct edges $e$ and $e'$ are said to be {\em parallel} if there is a dart in $e$ that is parallel to a dart in $e'$, or equivalently, if $e$ and $e'$ have the same endvertices.
 A graph that has no semi-edges, no loops and no pairs of distinct parallel edges
is {\em simple}. Note that a simple graph is uniquely determined by its vertex-set and its edge-set and the usual terminology of simple graphs applies. In particular, a dart in a simple graph is usually called an {\em arc}.

The {\em neighbourhood}  of a vertex $v\in \V(\Gamma)$ is defined as the set $\Gamma(v):=\{x \in \D(\Gamma) : \beg x = v\}$ and the cardinality $\val_\Gamma(v):=|\Gamma(v)|$ is called the {\em valence} of $v$; if $x\in \Gamma(v)$, we also say that $x$ emanates from $v$.
 
If $\Gamma:=(V,D,\beg,\inv)$  is a graph, then we say $\Gamma':=(V',D',\beg',\inv')$ is a {\em subgraph} of $\Gamma$ (and we write $\Gamma' \leq \Gamma$) if $V' \subseteq V$, $D' \subseteq D$ and the functions $\beg'$ and $\inv'$ are the respective restrictions of $\beg$ and $\inv$ to $D'$. If additionally $V' = V$ then we say $\Gamma'$ is a {\em spanning subgraph} of $\Gamma$. 

A {\em walk}
is a sequence $(x_1,x_2,\ldots,x_n)$ where $x_i\in \D(\Gamma)$ for all $i\in \{1,\ldots,n\}$, and $\term x_i = \beg x_{i+1}$ for all $i \in \{1,\ldots,n-1\}$. 
If $\beg x_1 = u$ and $\term x_n =v$, then we say $u$ is the {\em initial vertex}, $v$ is the {\em final vertex} and the walk is called a $uv$-walk. 
The {\em inverse} of a walk $W=(x_1,x_2,\ldots,x_n)$ is the walk $W^{-1}=(x_n^{-1},x_{n-1}^{-1},\ldots,x_1^{-1})$. If $W_1 = (x_1, \ldots, x_n)$ and $W_2=(y_1, \ldots, y_m)$ are two walks such that
the final vertex of $W_1$ is equal to the initial vertex of $W_2$,
then we denote by $W_1W_2 = (x_1, \ldots, x_n,y_1, \ldots, y_m)$  the {\em concatenation} of $W_1$ and $W_2$.

A walk $W$ is {\em reduced} if it contains no two consecutive darts that are inverse to each other. 
A walk $(x_1,x_2,\ldots,x_n)$ is a {\em path} if it is reduced and $\beg x_i \neq \beg x_j$ for all $0 \leq i<j \leq n$; a {\em closed walk} if $\beg x_1 = \term x_n$, and a {\em cycle} if it is a closed path. Note that a dart underlying a semi-edge or a loop forms a cycle of length $1$.
A graph is connected if for any two vertices $u$ and $v$, there exists a $uv$-walk. A connected graph that contains no cycles is a {\em tree}. Note that a tree is always a simple graph.

A {\em morphism} of graphs $\varphi \colon \Gamma \to \Delta$
 is a function $\varphi: \V(\Gamma) \cup \D(\Gamma) \to \V(\Delta) \cup \D(\Delta) $ that maps vertices to vertices, darts to darts and such that 
 $\varphi( \inv _\Gamma x) = \inv_\Delta \varphi(x)$  and
$\varphi(\beg_\Gamma x) = \beg_\Delta \varphi(x)$.
Due to the latter condition, a morphism of graphs without isolated vertices (vertices of degree $0$)
is uniquely determined by its restriction to the set of darts; we will often exploit this fact and
define a morphism on darts only.

A surjective morphism  is called an {\em epimorphism} and a bijective morphism is called an {\em isomorphism}. An isomorphism of a graph onto itself is called an {\em automorphism}.

For a group $G$ acting on a set $\Omega$ we let $\Omega/G$ denote the
set of orbits of this action; that is $\Omega/G = \{x^G : x \in \Omega\}$.
A subset of $\Omega$ which contains precisely one element from each orbit in $\Omega/G$ is called a {\em transversal} of $\Omega/G$.

Let $\Gamma=(V,D,\beg,\Inv)$ be a graph and let $G \leq \Aut(\Gamma)$. 
Let $\beg' \colon D/G \to V/G$ and $\inv'\colon D/G \to D/G$ be mappings defined by
$\beg'(d^{G})= (\beg d)^{G}$ and $\Inv'd^G= (\Inv d)^G$. Then we call the
graph $(V/G,D/G,\beg',\Inv')$  the {\em $G$-quotient} of $\Gamma$ and denote it by $\Gamma/G$. The mapping $f:\V(\Gamma) \cup \D(\Gamma) \to \V(\Gamma/G) \cup \D(\Gamma/G)$ given by $f(x) = x^G$ is a graph epimorphism and is called the {\em quotient map} associated with $\Gamma$ and $G$.

\section{Cyclic generalised covers}
\label{sec:cgc}

We now introduce the notions of a {\em cyclic generalised voltage graph} and {\em generalised cyclic cover} which are used to define the graphs appearing in Theorem~\ref{the:main}.
We start with formal definitions. Later in the section we explain how to depict the cyclic generalised voltage graphs in the most economical manner, and
 provide an illuminating example at the end of Section~\ref{sec:cgc}.

\subsection{Formal definitions and basic results}

\begin{definition}
\label{def:cgvg}
A pair $(\Delta,\lambda)$ is a {\em dart labelled graph} provided that
 $\Delta$ is a finite connected graph and $\lambda\colon \D(\Delta) \to \NN$ is a function.
If for a dart labelled graph $(\Delta,\lambda)$ there exists a function $\iota\colon \V(\Delta) \to \NN$
such that
\begin{eqnarray}
\label{eq:ratio}\lambda(x)\iota(\beg x) &=& \lambda(x^{-1})\iota(\term x),
\end{eqnarray}
then we say that $(\Delta,\lambda)$ is {\em extendable}. If, in addition,
$\zeta\colon \D(\Delta) \to \ZZ$
is a function such that
\begin{eqnarray}
\label{eq:invvolt}\zeta(x^{-1}) &\equiv& -\zeta(x) \quad (\mod \lambda(x)\iota(\beg x))
\end{eqnarray}
for every dart $x\in \D(\Delta)$, then we say that
the quadruple $(\Delta,\lambda,\iota,\zeta)$ is a {\em cyclic generalised voltage graph}.
\end{definition}


%
\begin{figure}[h!]
\label{fig:examples}
\centering
\includegraphics[width=0.7\textwidth]{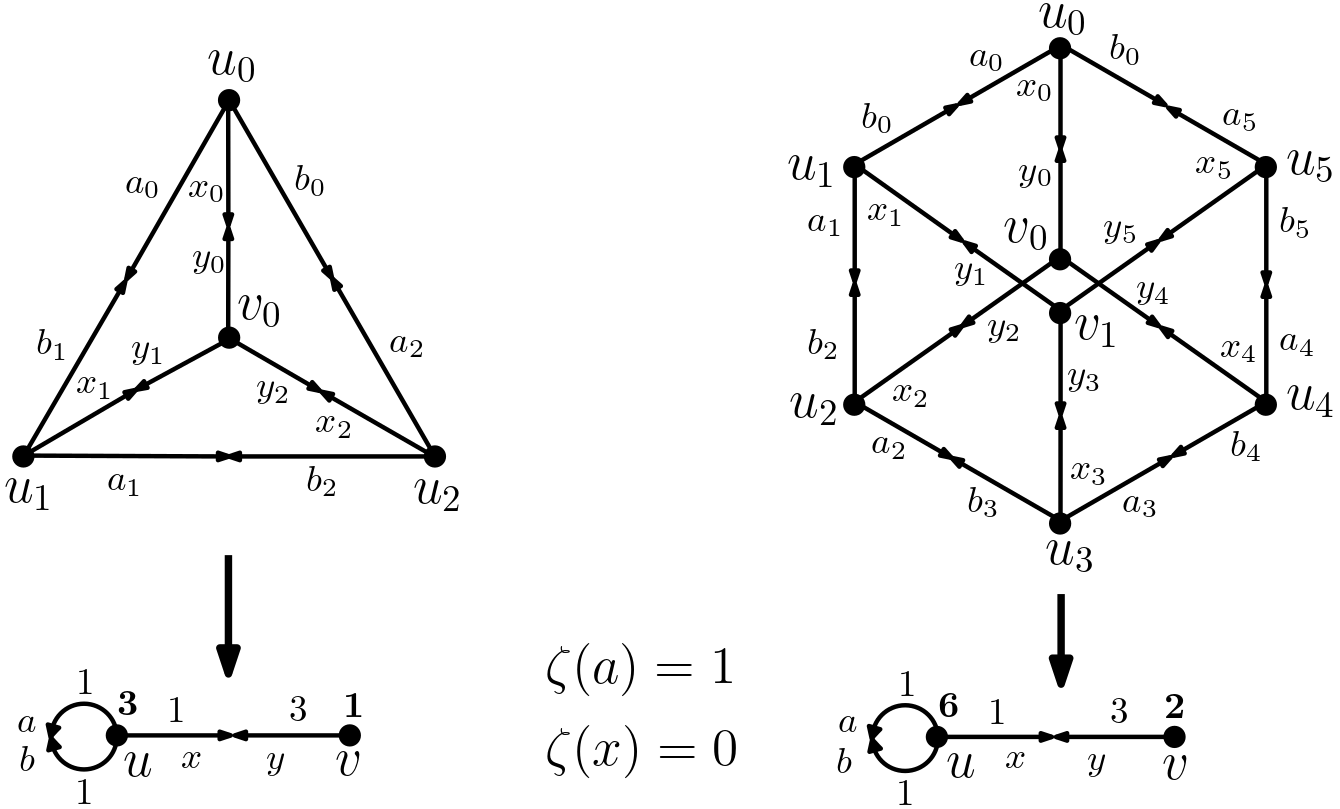}
\caption{$K_4$ and the cube graph $Q_3$ as cyclic covers of a graph with two vertices.}
\end{figure}
%


\begin{definition}
\label{def:cyc}
Let $(\Delta,\lambda,\iota,\zeta)$ be a cyclic generalised voltage graph and let $\Gamma$ be the graph defined by:
\begin{itemize}
\item $\V(\Gamma) = \{v_i \mid v \in \V(\Delta), i \in \ZZ_{\iota(v)}\}$;
\item $\D(\Gamma) = \{x_i \mid x \in \D(\Delta), i \in \ZZ_{\lambda(x)\iota(\beg x)}  \}$;
\item $\beg_{\Gamma}(x_i)= (\beg_{\Delta} x)_i$;
\item $\inv_{\Gamma}(x_i)= (\inv_{\Delta} x)_{i+\zeta(x)}$.
\end{itemize}
Then $\Gamma$ is called the {\em cyclic generalised cover} arising from $(\Delta,\lambda,\iota,\zeta)$ and is denoted by $\Cyc(\Delta,\lambda,\iota,\zeta)$.
\end{definition}

For a given vertex $v \in \V(\Delta)$, the set $\{v_i \mid i \in \ZZ_{\iota(v)}\}$ is called the fibre above $v$ and is denoted by $\fib(v)$. Similarly, for a dart $x \in \D(\Delta)$, we call $\{x_i \mid  i \in \ZZ_{\lambda(x)\iota(\beg x)}  \}$ the fibre above $x$, and denote it $\fib(x)$. The function $\pi: \V(\Gamma) \cup \D(\Gamma) \to \V(\Delta) \cup \D(\Delta)$ mapping each $x_i \in \fib(x)$ to $x$, where $x \in \V(\Gamma) \cup \D(\Gamma)$, is called the {\em generalised covering projection}.

 Of course one should verify that $\Gamma:=\Cyc(\Delta,\lambda,\iota,\zeta)$ is a well-defined graph. That is, that the function $\inv_{\Gamma}$ is well defined and $\inv_{\Gamma}\inv_{\Gamma}x=x$ for all darts $x$. Observe that condition (\ref{eq:ratio}) guarantees that the fibres $\fib(x)$ and $\fib(x^{-1})$ of two mutually inverse darts of $\Delta$ have the same size. Furthermore, it follows from condition (\ref{eq:invvolt}) that $\inv_{\Gamma}\inv_{\Gamma}x=x$. Hence  $\inv_{\Gamma}$ is a well-defined function and $\Gamma$ is a graph. 

Let $(\Delta,\lambda,\iota,\zeta)$ be a cyclic generalised voltage graph, let $x \in \D(\Delta)$ and $u = \beg x$. Consider the cyclic generalised cover $\Gamma:=\Cyc(\Delta,\lambda,\iota,\zeta)$. From Definition \ref{def:cyc} we see that a dart $x_j \in \fib(x)$ emanates from a vertex $u_i \in \fib(u)$ if and only if $j \equiv i$ ($\mod \iota(u)$). Clearly, there are exactly $\lambda(x)$ elements of $\ZZ_{\lambda(x)\iota(u)}$ congruent to $i$ modulo $\iota(u)$. Then, since $|\fib(x)|=\lambda(x)\iota(u)$, we see that $\lambda(x)$ is the number of darts in $\fib(x)$ that are incident to any given vertex in $\fib(u)$. In particular this implies that the valency of a vertex $u_i \in \fib(u)$ is given by the formula 
\begin{align}
\label{eq:degree}
\val_\Gamma(u_i) = \sum\limits_{x \in \Delta(u)} \lambda(x).
\end{align}

Lemma \ref{lem:adjacency} below, which follows almost directly from Definition \ref{def:cyc}, gives in rather simple terms the adjacency rule for the cover of a cyclic generalised voltage graph, and will prove to be quite useful in the following pages.  

\begin{lemma}
\label{lem:adjacency}
Let $(\Delta,\lambda,\iota,\zeta)$ be a cyclic generalised voltage graph let $u,v \in \V(\Delta)$ and let $\Gamma = \Cyc(\Delta,\lambda,\iota,\zeta)$. Two vertices $u_i \in \fib(u)$ and $v_j \in \fib(v)$ are adjacent if and only if 
\begin{align}
j - i \equiv  \zeta(x) \quad (\mod \gcd(\iota(u), \iota(v)))
\end{align}
for some  $x \in \D(\Delta)$ such that $\beg_\Delta x = u$ and $\term_\Delta x = v$.
\end{lemma}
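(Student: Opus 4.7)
The plan is to unwind the adjacency condition in $\Gamma$ into a pair of simultaneous congruences, one coming from the definition of $\beg_\Gamma$ and one from the definition of $\inv_\Gamma$, and then eliminate the auxiliary variable indexing the fibre above a dart by invoking the standard solvability criterion for a linear congruence.

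First I would observe that $u_i$ and $v_j$ are adjacent in $\Gamma$ if and only if there exists a dart $y\in\D(\Gamma)$ with $\beg_\Gamma y = u_i$ and $\term_\Gamma y = v_j$. Every dart of $\Gamma$ has the form $x_k$ for some $x\in\D(\Delta)$ and some $k\in\ZZ_{\lambda(x)\iota(\beg x)}$, and $\beg_\Gamma(x_k) = (\beg_\Delta x)_k$ forces $\beg_\Delta x = u$ and (reading $k$ modulo $\iota(u)$) $k \equiv i \pmod{\iota(u)}$. Computing $\term_\Gamma(x_k) = \beg_\Gamma(\inv_\Gamma x_k) = \beg_\Gamma((x^{-1})_{k+\zeta(x)}) = (\term_\Delta x)_{k+\zeta(x)}$ then forces $\term_\Delta x = v$ and $k+\zeta(x) \equiv j \pmod{\iota(v)}$.

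Hence $u_i\sim v_j$ in $\Gamma$ if and only if there exist a dart $x\in\D(\Delta)$ with $\beg_\Delta x=u$, $\term_\Delta x=v$, together with an integer $k$ satisfying
\begin{align*}
k &\equiv i \pmod{\iota(u)},\\
k &\equiv j - \zeta(x) \pmod{\iota(v)}.
\end{align*}
Writing $k = i + t\iota(u)$ for an unknown integer $t$ and substituting, the question reduces to whether the linear congruence $t\,\iota(u) \equiv j - i - \zeta(x) \pmod{\iota(v)}$ admits a solution in $t$. By the elementary theory of linear congruences this is equivalent to the divisibility condition $\gcd(\iota(u),\iota(v)) \mid (j-i-\zeta(x))$, i.e.\ to $j-i \equiv \zeta(x) \pmod{\gcd(\iota(u),\iota(v))}$, which is exactly what the lemma asserts.

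I do not anticipate any real obstacle; the only bookkeeping subtlety is to keep track of which modulus each index lives in (note that the index of $x_k$ runs in $\ZZ_{\lambda(x)\iota(u)}$, while $\beg_\Gamma$ projects it down to $\ZZ_{\iota(u)}$), and to verify that once $t$ is permitted to range over all of $\ZZ$ the choice $k=i+t\iota(u)$ reaches every residue in $\ZZ_{\lambda(x)\iota(u)}$ congruent to $i$ modulo $\iota(u)$, so that no candidate index is lost. One should also remark that the existence clause of the lemma is symmetric in $u,v$ precisely because condition (\ref{eq:invvolt}) on $\zeta$ guarantees that if $x$ witnesses adjacency, then $x^{-1}$ witnesses the same adjacency from the opposite side.
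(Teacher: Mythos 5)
Your proposal is correct and follows essentially the same route as the paper's proof: both unwind adjacency into the pair of congruences $k\equiv i\ (\mathrm{mod}\ \iota(u))$ and $k+\zeta(x)\equiv j\ (\mathrm{mod}\ \iota(v))$ and then eliminate $k$ via the standard solvability criterion for the resulting linear diophantine equation, yielding the condition modulo $\gcd(\iota(u),\iota(v))$. The bookkeeping points you flag (which modulus each index lives in, and that $k=i+t\iota(u)$ sweeps out all admissible residues in $\ZZ_{\lambda(x)\iota(u)}$) are exactly the right ones and are handled implicitly in the paper as well.
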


\begin{proof}
Observe that $u_i$ is adjacent to $v_j$ if and only if there exist a dart $x \in \D(\Delta)$ with $\beg_\Delta x =u$ and $\term_\Delta x = v$, and a dart $x_{\ell} \in \fib(x)$ such that $\beg_\Gamma x_{\ell} = u_i$ and $\term_\Gamma x_{\ell} = v_j$. By Definition~\ref{def:cyc}, we see that
\begin{align*}
\beg_\Gamma x_{\ell}  =  (\beg_\Delta x)_\ell = u_\ell\,;
\end{align*}
\begin{align*}
\term_\Gamma x_{\ell}  =  \beg_\Gamma (\Inv_\Gamma x_{\ell}) =
\beg_\Gamma ((\Inv_\Delta x)_{\ell+\zeta(x)} ) = (\term_\Delta  x)_{\ell+\zeta(x)} = v_{\ell+\zeta(x)}.
\end{align*}
Recall that the indices at $u$ and $v$ should be considered as elements of $\ZZ_{\iota(u)}$ and $\ZZ_{\iota(v)}$, respectively. Hence $u_i$ is adjacent to $v_j$ if and only if
$i \equiv \ell\> (\mod \iota(u))$ and $j \equiv \ell + \zeta(x)\> (\mod \iota(v))$
for some integer $\ell$. The latter is equivalent to the existence of integers $X,Y$ and $\ell$
such that $\iota(u)X +\ell = i$ and $\iota(v)Y +\ell = j - \zeta(x)$. By subtracting these two equalities,
we see that in this case the diophantine equation $\iota(u)X - \iota(v)Y = i-j+\zeta(x)$ has a solution,
which is equivalent to requiring that $j - i \equiv  \zeta(x) \quad (\mod \gcd(\iota(u), \iota(v)))$.
Conversely, if this diophantine equation has a solution, then by letting $\ell := i -\iota(u)X$, which equals
$j - \iota(v)Y - \zeta(x)$, we see that $i \equiv \ell\> (\mod \iota(u))$ and $j \equiv \ell + \zeta(x)\> (\mod \iota(v))$, and thus $u_i$ is adjacent to $v_j$.
\end{proof}

\begin{remark}
\label{rem:reduced} 
Suppose $(\Delta,\lambda,\iota,\zeta)$ is a cyclic generalised voltage graph and let $\zeta':\D(\Delta) \to \ZZ$ be the function obtained from $\zeta$ by reducing every $\zeta(x)$ modulo $\gcd(\iota(\beg x),\iota(\term x))$. That is, for each dart $x \in \D(\Delta)$, $\zeta'(x)$ is the unique integer such that $\zeta'(x) < d_x$ and $\zeta'(x) \equiv \zeta(x)$ $(\mod d_x)$ where $d_x = \gcd(\iota(\beg x),\iota(\term x))$. Then, from Lemma~\ref{lem:adjacency} we have $\Cyc(\Delta,\lambda,\iota,\zeta)=\Cyc(\Delta,\lambda,\iota,\zeta')$.
\end{remark}

This construction is illustrated in Figure \ref{fig:examples}. Consider the cyclic generalised voltage graph $(\Delta,\lambda,\iota,\zeta)$ with darts $a$, $b$, $x$ and $y$, and vertices $u$ and $v$, shown in the bottom left of the figure. Its cyclic generalised cover, $\Gamma$, is shown immediately above it. Next to each vertex, in bold lettering, is its image under $\iota$. This is, $\iota(u) = 3$ and $\iota (v) = 1$. Then, there are $3$ distinct vertices in the fibre of $u$ (labelled $u_i$ with $i \in \ZZ_3$) and a single vertex in the fibre of $v$. Next to each dart is a number indicating its label $\lambda$. Since $\lambda(y)=3$ and $\beg(y)=v$, there are $3$ darts in the fibre of $y$ beginning at each vertex in the fibre of $v$ (which consists solely of $v_0$). Similarly, for each vertex $u_i$ in the fibre of $u$ there is a single dart in each of the fibres of $x$, $a$ and $b$, that begins at $u_i$.

The voltages are given by $\zeta(a)=1$, $\zeta(b) = 2$ and $\zeta(x) = \zeta(y) = 0$. Consider the dart $a$ and see that, by Lemma \ref{lem:adjacency} (and since $\zeta(a)=1$, $\beg(a)=u$ and $\term(a)=u$) each vertex $u_i$ is adjacent to all vertices $u_j$ such that $j \equiv i + \zeta(a) = i + 1$ modulo $\iota(u) = 3$. This is, each $u_i$ is adjacent to $u_{i+1}$. By the same token, but considering the dart $b$, we see that each $u_i$ is adjacent to $u_{i+2}$. Finally, Lemma \ref{lem:adjacency} also tells us that $v_0$ is adjacent to all $u_0$, $u_1$ and $u_2$.  

\subsection{Extendability of labelled graphs}

We now turn our attention to the question of which labelled graphs $(\Delta,\lambda)$ are
extendable (in the sense of Definition~\ref{def:cgvg}) and which functions $\iota$ then satisfy condition (\ref{eq:ratio}). 
%
 This will then allow us to describe a cyclic generalised voltage graph
in a more economical manner.

For a function $\lambda \colon \D(\Delta) \to \NN$ and a walk $W=(x_1,\ldots,x_n)$ of $\Delta$, let
\begin{align}
\label{eq:rho}
 \rho_\lambda(W) := \prod\limits_{i=1}^n \frac{\lambda(x_i)}{\lambda(x_i^{-1})}.
\end{align}

Then clearly
\begin{align}
\label{eq:rhoW1W2}
\rho_\lambda(W^{-1}) = \rho_\lambda(W)^{-1} \> \hbox{ and } \>
 \rho_\lambda(W_1W_2) = \rho_\lambda(W_1)\rho_\lambda(W_2)
\end{align}
for every two walks for which the concatenation $W_1W_2$ is defined.

Now observe that if 
$\iota\colon \V(\Delta) \to \NN$ is a function which together with $\lambda$ satisfies
the condition (\ref{eq:ratio}), then a repeated application of (\ref{eq:ratio}) to
the darts $x_i$ of the walk $W$ implies that
\begin{align}
\label{eq:li}
\iota(\term x_n)
= \iota(\beg(x_1)) \rho_\lambda(W).
\end{align}

In particular, $\rho_\lambda(W) = 1$ for every closed walk $W$.
This observations has the following useful consequence.

\begin{lemma}
\label{lem:consistent}
A  labelled graph $(\Delta,\lambda)$ is extendable if and only if
$\rho_\lambda(W) = 1$ for every closed walk $W$ of $\Delta$.

Let $u\in \V(\Delta)$, $m\in \NN$ and suppose that $(\Delta,\lambda)$ is extendable.
Then a function $\iota \colon \V(\Delta) \to \NN$ satisfying (\ref{eq:ratio})
and such that $\iota(u) = m$ exists
if and only if $m$ is divisible by $\lcm\{\alpha(v) : v\in \V(\Delta)\}$, where 
$\alpha(v)$ is the smallest integer for which $\alpha(v)\rho_\lambda(W_v) \in \NN$
for one (and thus every) $uv$-walk $W_v$. Such a function $\iota$ is then unique.
\end{lemma}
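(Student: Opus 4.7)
\medskip

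The plan is to build the proof around the observation, already anticipated in the excerpt, that equation (\ref{eq:li}) forces any extending function $\iota$ to be determined by the values $\rho_\lambda(W)$ along walks. Concretely, once a base vertex $u$ and the integer $\iota(u)$ are chosen, the value $\iota(v)$ for every other vertex $v$ is forced to be $\iota(u)\rho_\lambda(W_v)$ for any $uv$-walk $W_v$. Everything then reduces to (i) checking that $\rho_\lambda$ depends only on endpoints, and (ii) characterising when the forced values lie in $\NN$.

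\medskip

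First I would handle the forward implication of the ``iff'' in statement one. If $\iota$ satisfying (\ref{eq:ratio}) exists and $W=(x_1,\ldots,x_n)$ is a closed walk, then iterating (\ref{eq:ratio}) along the darts of $W$, exactly as in the derivation of (\ref{eq:li}), gives $\iota(\beg x_1) = \iota(\beg x_1)\rho_\lambda(W)$, and since $\iota(\beg x_1)\in\NN$ is nonzero we conclude $\rho_\lambda(W)=1$. For the converse, I would argue that the triviality of $\rho_\lambda$ on closed walks together with the multiplicativity (\ref{eq:rhoW1W2}) implies that $\rho_\lambda(W_v)$ depends only on the endpoints $u,v$: indeed, for any two $uv$-walks $W_v, W_v'$, the concatenation $W_v(W_v')^{-1}$ is closed, so $\rho_\lambda(W_v)\rho_\lambda(W_v')^{-1}=1$. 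Denote the common value by $r(v)\in \QQ_{>0}$; note $r(u)=1$.

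\medskip

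With $r$ in hand I would then prove statement two. Fix $u$ and $m\in\NN$ and suppose $\iota$ satisfies (\ref{eq:ratio}) with $\iota(u)=m$. Applying (\ref{eq:li}) to any $uv$-walk yields $\iota(v)=m\,r(v)$, which already gives uniqueness and the necessary condition $m\,r(v)\in\NN$ for every $v$. Writing $r(v)=p_v/q_v$ in lowest terms, the smallest $\alpha\in\NN$ with $\alpha\,r(v)\in\NN$ is precisely $\alpha(v)=q_v$, so $m\,r(v)\in\NN$ for all $v$ is equivalent to $\alpha(v)\mid m$ for all $v$, i.e.\ to $\lcm\{\alpha(v):v\in\V(\Delta)\}\mid m$. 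Conversely, whenever $m$ satisfies this divisibility condition, I would define $\iota(v):=m\,r(v)$ and verify (\ref{eq:ratio}) directly: for any dart $x$ with $\beg x=v$, $\term x=w$, taking a $uv$-walk $W_v$ and appending $x$ gives a $uw$-walk, so $r(w)=r(v)\lambda(x)/\lambda(x^{-1})$, which after multiplying by $m$ is exactly (\ref{eq:ratio}).

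\medskip

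I do not expect any serious obstacle here; the argument is essentially bookkeeping with the homomorphism-like function $\rho_\lambda$. The one mild subtlety is making sure that the base-point choice of $u$ does not matter for defining $\alpha(v)$ (that is, the parenthetical ``one and thus every $uv$-walk''), which follows immediately from the endpoint-invariance of $\rho_\lambda$ established in the previous step. Everything else is a direct translation between the ratio condition (\ref{eq:ratio}) and the multiplicative behaviour of $\rho_\lambda$ along walks.
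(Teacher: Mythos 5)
Your proposal is correct and follows essentially the same route as the paper's proof: both derive the closed-walk condition from iterating (\ref{eq:ratio}), use $\rho_\lambda(W_1W_2^{-1})=1$ to get endpoint-invariance, observe that $\iota(v)$ is forced to equal $\iota(u)\rho_\lambda(W_v)$ (giving uniqueness and the divisibility condition), and verify (\ref{eq:ratio}) for the constructed $\iota$ by appending a single dart to a $uv$-walk. The only cosmetic difference is that you construct $\iota$ directly for every admissible $m$, while the paper constructs it only for the minimal value $\lcm\{\alpha(v)\}$; this is harmless.
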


\begin{proof}
Fix a vertex $u$ of $\Delta$. 
Suppose first that $(\Delta,\lambda)$ is extendable and that $\iota\colon \V(\Delta) \to \NN$ is a function
that together with $\lambda$  satisfies (\ref{eq:ratio}). In view of (\ref{eq:li}), it follows that
$\rho_\lambda(W) = 1$ for every closed walk $W$. Moreover, if $\iota(u) = m$, then
$\iota(v) = m \rho_\lambda(W_v)$ where $W_v$ is an arbitrary $uv$-walk. Since $\iota(v)$ is an integer,
this implies that $m$ is divisible by $\alpha(v)$. Since this is true for every vertex $v\in \V(\Delta)$,
we see that $m$ is divisible by $\lcm\{\alpha(v) : v\in \V(\Delta)\}$, as claimed.

Suppose now that $\rho_\lambda(W) = 1$ for every closed walk $W$.
Let $v\in \V(\Delta)$ and
suppose that $W_1$ and $W_2$ are two $uv$-walks. Then $W_1W_2^{-1}$ is a closed walk
and hence $\rho_\lambda(W_1W_2^{-1}) = 1$. But then, in view of (\ref{eq:rhoW1W2}),
$\rho_\lambda(W_1) =\rho_\lambda(W_2)$.

Now choose a $uv$-walk $W_v$, let $\alpha(v)$ be the smallest integer such that 
$\alpha(v)\rho_\lambda(W_v) \in \NN$ 
and let
\begin{eqnarray}
\label{eq:ext1}\iota(u) &:=& \lcm\{\alpha(v) \mid v \in \V\}; \\ 
\label{eq:ext2}\iota(v) &:= &\iota(u) \rho_\lambda(W_v)\> \hbox { if } v\not = u.
\end{eqnarray}
In view of the the above argument, we see that this defines  $\alpha(v)$ and $\iota(v)$ independently of the choice of the $uv$-walk $W_v$.

To show that  $\iota$ together with $\lambda$ satisfies (\ref{eq:ratio}),
 let $x \in \D(\Delta)$ and let $X$ be the walk of length $1$ consisting only of the dart $x$. 
 Then 
 $
 \iota(\term x)= \iota(u)\rho_\lambda(W_{\term x})=$
$\iota(u)\rho_\lambda(W_{\beg x})\rho_\lambda(X)=$
$\iota(\beg x)\rho_\lambda(X)=\iota(\beg x)\frac{\lambda(x)}{\lambda(x^{-1})}$
and (\ref{eq:ratio}) thus holds.
\end{proof}

\begin{remark}
Note that the condition $\rho_\lambda(W) = 1$ in Lemma~\ref{lem:consistent} only needs to be checked for all cycles $W$. To see that, assume that $\rho_\lambda(W) = 1$ for every cycle $W$ of $\Delta$ and let $W=(x_1,\ldots,x_n)$ be a closed walk of shortest length such that $\rho_\lambda(W) \not= 1$. 
Then $W$ is not a cycle. However, $W$ is a reduced walk since otherwise, by removing a pair of consecutive mutually inverse darts, we obtain a shorter walk $W'$ with $\rho_\lambda(W') = \rho_\lambda(W) \not = 1$, a contradiction. Hence
 there are $i,j$, $1\le i<j\le n$ with $\beg x_i = \beg x_j$.
Let $W_1 = (x_1, \ldots, x_{i-1}, x_j, \ldots,x_n)$ and 
$W_2 = (x_i, \ldots, x_{j-1})$. By the minimality of $W$, we see that $\rho_\lambda(W_i) = 1$ for all $i\in\{1,2\}$. On the other hand, clearly $\rho_\lambda(W)=\rho_\lambda(W_1)\rho_\lambda(W_2)$, a contradiction.

Moreover, an easy argument (which we leave out) shows that
is suffices to check the condition $\rho_\lambda(W) = 1$ on
any complete set of fundamental cycles (recall that
a complete set of {\em fundamental cycles relative to a fixed spanning tree} $T$ is the set of all cycles that contain exactly one cotree dart).
\end{remark}

\begin{remark}
\label{rem:index}
The function $\iota$ that we define in the proof of Lemma \ref{lem:consistent} is such that $\iota(u)$ is the least positive integer $m$ such that $m\rho_\lambda(W_v)$ is an integer for all $v \in \V(\Delta)$.
It follows that $\gcd\{\iota(v) \mid v \in \V(\Delta)\}=1$. This has two straightforward but important consequences. First, if $\iota':\D(\Delta) \to \NN$ is any function that together with $\lambda$ satisfies (\ref{eq:ratio}), 
then $\iota' = c \cdot \iota$ for some positive integer $c$. Second, from Theorem~\ref{theo:concyc}, we see that $\Cyc(\Delta,\lambda,\iota,\zeta)$, where $\zeta$ is defined to be $0$ on every dart, is a connected graph. 
\end{remark}


\subsection{Depicting cyclic generalised voltage graphs}

We will try to be as economical as possible when drawing a cyclic generalised voltage graph $(\Delta,\lambda,\iota,\zeta)$. We will draw every edge simply as a line segment connecting its endpoints, every loop as a closed curve beginning and ending at its unique endpoint, and every semi-edge as a pendant line segment. We will write the label $\lambda(x)$ near to each dart that belongs to an edge, so an edge will have two labels, one next to each of its endpoints. We will omit the label on a dart $x$ whenever $\lambda(x)=\lambda(x^{-1})=1$. Moreover, since the index function $\iota$ is completely determined by its image on one vertex, it suffices to specify the image of $\iota$ on a distinguished vertex $v$. We will often use the letter $m$ to denote $\iota(v)$. To indicate voltages, we will draw an arrowhead in the middle of every loop or edge $\{x,x^{-1}\}$. We will write $\zeta(x)$ next to the arrowhead if it is oriented from $\beg x$ to $\term x$ (otherwise, we will write $\zeta(x^{-1})$). We will omit the voltage on edges and loops having trivial voltage. A semi-edge $x$ with no specified voltage in the picture will be assumed to have voltage $\iota(\beg x )/2$.


\begin{figure}[h]
\centering
\includegraphics[width=0.4\textwidth]{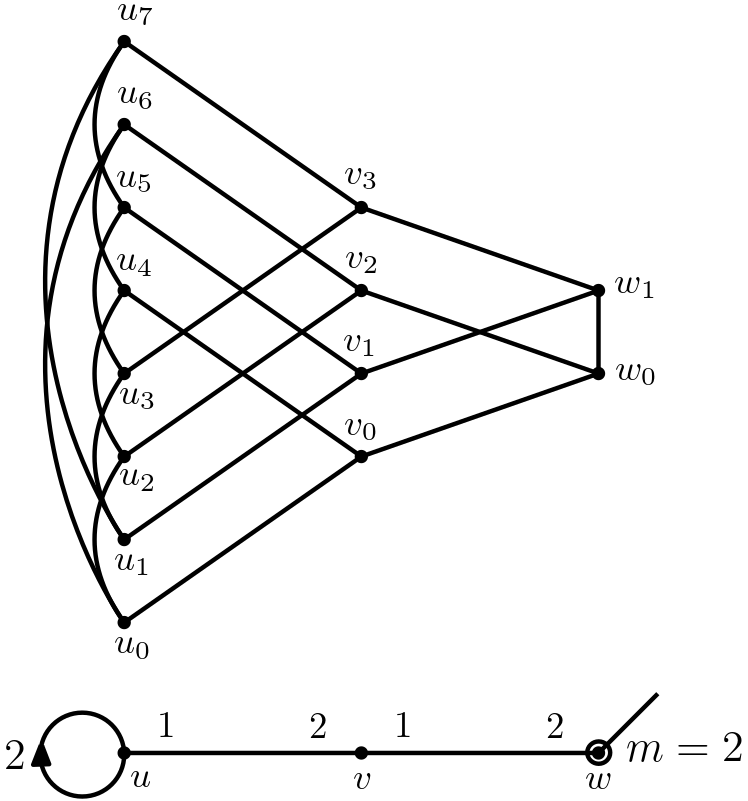}
\caption{A diagram and its covering graph}
\label{fig:firstexample}
\end{figure}

We now provide an example that illustrates the above described economical way of depicting the cyclic generalised voltage graphs and describe the
cyclic generalised cover arising from it in as much an intuitive way as possible.

Consider the cyclic generalised voltage graph $(\Delta,\lambda,\iota,\zeta)$ in the bottom of Figure~\ref{fig:firstexample}, consisting of
three vertices $u$, $v$ and $w$,  and having one {\em loop} attached to the vertex $u$, two ordinary edges linking
$u$ with $v$ and $v$ with $w$, respectively, and
a {\em semi-edge} attached to $w$. We will
explain how it gives rise
to its {\em cyclic generalised cover}.

The index function $\iota$ is given as follows. First, note that $w$ is the distinguished vertex and has a positive integer $m$ (in this case $m=2$) assigned to it. This indicates that $\iota(w)=m$ or, simply put, that the vertex $w$ gives rise to $m$ distinct vertices in $\Gamma$ (the fibre above $w$). The images of other vertices under $\iota$ (and thus also the size of the fibres above those vertices) are determined by $\iota(w)$. Recall that for all darts $x$ we have $\lambda(x)\iota(\beg x) = \lambda(x^{-1})\iota(\term x)$. By letting $x$ be the dart beginning at $w$ and ending at $v$ we have $\lambda(x)\iota(w) = \lambda(x^{-1})\iota(v)$, and so $\iota(v) = 2 \cdot \iota(w)=4$. Similarly, $\iota(u) = 2 \iota(v) = 8$. 

Adjacency rules are given by Lemma \ref{lem:adjacency}. Recall that an edge lacking an arrowhead indicates that both darts comprising it have trivial voltage. For instance all darts in the edge between $w$ and $v$, as well as those in the edge between $v$ and $u$, have trivial voltage. We see that a vertex $w_i \in \fib(w)$ is adjacent to all vertices $v_j \in \fib(v)$ such that $j \equiv i$ $(\mod 2)$. This is each $w_i$ is adjacent to $v_i$ and $v_{i+2}$. Similarly each $v_i$ is adjacent to $u_i$ and $u_{i+4}$. Now, the loop at $u$ has an arrowhead with the number $2$ written next to it. This is, one dart in the loop at $u$ has voltage $2$ while the other has voltage $-2$ (or equivalently, $6$, since the indices of the vertices in $\fib(u)$ are taken modulo $\iota(u)=8$). Then every $u_i \in \fib(u)$ is adjacent to $u_{i+2}$ and $u_{i-2}$. Finally, the semi-edge at $w$ has no specified voltage in the picture. This means that it has voltage $\iota(w)/2 = 1$. Then every vertex $w_i \in \fib(w)$ is adjacent to $w_{i+1}$.

\section{Classification theorem}
\label{sec:classification}

We now use the language of cyclic generalised covers to present a classification of  all connected simple cubic graphs admitting an automorphism with at most three vertex-orbits. 

\begin{definition}
\label{def:ccv}
Let $(\Delta,\lambda,\iota,\zeta)$ be a cyclic generalised voltage graph. We say that $(\Delta,\lambda,\iota,\zeta)$ is a cyclic cubic voltage graph (or simply a $\ccv$-graph) if $\Cyc(\Delta,\lambda,\iota,\zeta)$ is a simple connected cubic (that is, $3$-regular) graph.
\end{definition}

Let us now turn our attention to Figure \ref{fig:quotients}. The $25$ graphs depicted here, which we denote by $\Delta_i$ for $i \in \{1,\ldots,25\}$, are all the dart-labelled graphs on at most $3$ vertices that can be extended to a ccv-graph. The proof that this family of $25$ dart-labelled graphs is indeed complete is postponed until Section~\ref{sec:ccv}, where it was obtained by checking all the graphs on at most $3$ vertices and maximal valence at most $3$,
and then for each of these graphs finding all the labelings $\lambda$
satisfying the conditions given in Proposition~\ref{prop:consistentccv} (note that this is a finite problem for each graph).
For each dart-labelled graph $\Delta_i$ we have chosen a distinguished vertex shown as a white vertex with a black circle around it and the letter $m$ next to it. A voltage assignment, which may assign  values $r$ or $s$ to some darts, has also been specified in the drawing. 

\begin{figure}[h!]
\includegraphics[width=0.75\textwidth]{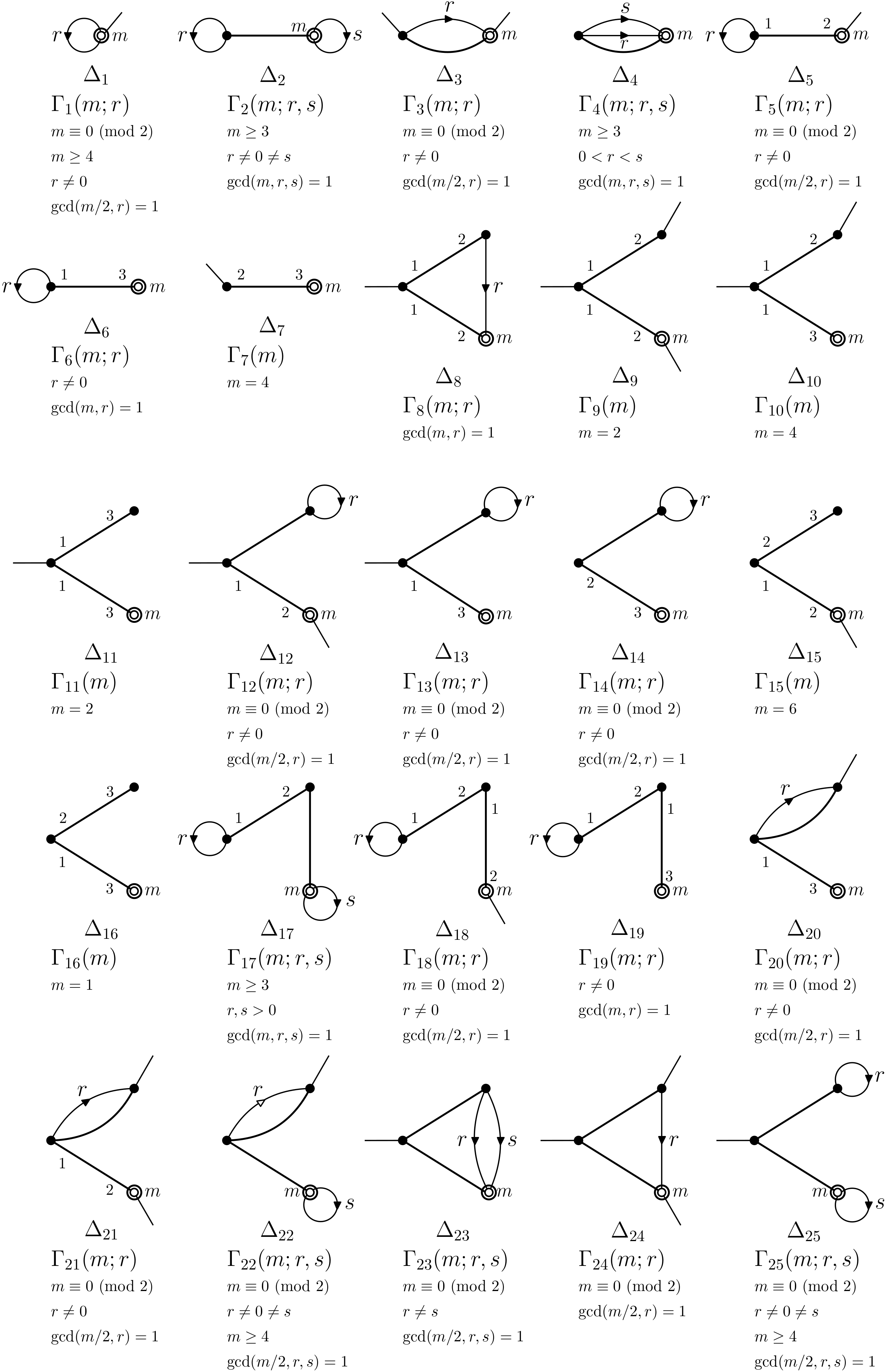}
\caption{The $25$ dart-labelled graphs on at most three vertices that can be extended to a ccv-graph.}
\label{fig:quotients}
\end{figure}

Let $i \in \{1,\ldots,25\}$ and let $m,r,s > 0$. We define $\Gamma_i(m;r,s)$ (alternatively $\Gamma_i(m;r)$ or $\Gamma_i(m)$) as the cyclic generalised cover of the cyclic generalised voltage graph $(\Delta,\lambda,\iota,\zeta)$ where $(\Delta,\lambda)=\Delta_i$, $\iota$ is the index function defined by $\iota(v_0)=m$, where $v_0$ is the distinguished vertex of $\Delta_i$, and $\zeta$ is the voltage assignment shown in the picture, where we take the voltage of a semi-edge $x$ to be $\iota(\beg x)/2$. We will also assume that the integers $m$, $r$ and $s$ satisfy the conditions listed underneath $\Delta_i$ in Figure \ref{fig:quotients}. As we will see in Section \ref{sec:ccv}, these conditions are sufficent and necessary for $(\Delta,\lambda,\iota,\zeta)$ to be a $ccv$-graph. Hence, the graph $\Gamma_i(m;r,s)$ (or $\Gamma_i(m;r)$ or $\Gamma_i(m)$, accordingly) is a connected, cubic, simple graph. We will say that a graph $\Gamma$ is a ccv-cover of $\Delta_i$ for some $i \in \{1,\ldots,25\}$ if $\Gamma$ is isomorphic to $\Gamma_i(m;r,s)$ (or $\Gamma_i(m;r)$ or $\Gamma_i(m)$, accordingly).

Consider for instance the labelled graph $\Delta_6$. The covering graph $\Gamma_6(1;1)$ is the cyclic generalised cover of the ccv-graph obtained by assigning voltage $r=1$ to (one dart underlying) the loop and letting $\iota(v_0)=1$. Then, if $u$ is the remaining vertex, we see that $\iota(u) = 3$. Observe that the ccv-graph thus obtained is precisely the generalised cyclic voltage graph shown in the bottom left of Figure \ref{fig:examples}. Then $\Gamma_6(1;1)$ is isomorphic to $K_4$, as shown in the figure. Similarly, $\Gamma_6(2;1)$ is isomorphic to the cube $Q_3$ (see Figure \ref{fig:examples}). The graph constructed in the example in the end of the preceding chapter and shown in Figure \ref{fig:firstexample} is the graph $\Gamma_{18}(2;2)$.

The proof of the following theorem will be given in Section~\ref{ssec:proof}.

\begin{theorem}
\label{the:main}
A graph $\Gamma$ is a connected, cubic, simple graph with a cyclic group of automorphisms $G \leq \Aut(\Gamma)$ having at most $3$ orbits on vertices if and only if it is isomorphic to one of the following:
\begin{enumerate}
\item $\Gamma_7(2)$, $\Gamma_{9}(4)$, $\Gamma_{10}(2)$, $\Gamma_{11}(4)$,  $\Gamma_{15}(6)$ or $\Gamma_{16}(1)$;
\item $\Gamma_1(m;r)$ with $m$ even, $m \geq 4$, $r \neq 0$, $\gcd(\frac{m}{2},r)=1$;
\item $\Gamma_{i}(m;r)$ with $i \in \{3,12,13,14,18,20,21\}$, $m$ even, $r \neq 0$, $\gcd(\frac{m}{2},r)=1$;
\item $\Gamma_i(m;r)$ with $i \in \{6,19\}$, $r \neq 0$, $\gcd(m,r)=1$;
\item $\Gamma_8(m;r)$ with $\gcd(m,r)=1$;
\item $\Gamma_{i}(m;r,s)$ with $i \in \{2,17\}$, $m \geq 3$, $r \neq 0 \neq s$, $\gcd(m,r,s)=1$;
\item $\Gamma_4(m;r,s)$ with $m \geq 3$, $0 < r < s$, $\gcd(m,r,s)=1$;
\item $\Gamma_5(m;r,s)$ with $m$ even, $r \neq 0$, $\gcd(\frac{m}{2},r)=1$;
\item $\Gamma_{23}(m;r,s)$ with $m$ even, $r \neq s$, $\gcd(\frac{m}{2},r)=1$;
\item $\Gamma_{24}(m;r,s)$ with $m$ even, $\gcd(\frac{m}{2},r)=1$;
\item $\Gamma_{i}(m;r,s)$ with $i \in \{22,25\}$, $m$ even, $m \geq 4$, $r \neq 0 \neq s$, $\gcd(\frac{m}{2},r,s)=1$.
\end{enumerate}
\end{theorem}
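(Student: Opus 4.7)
The plan is to prove Theorem~\ref{the:main} by setting up a correspondence between isomorphism classes of connected, simple, cubic graphs $\Gamma$ admitting a cyclic group $G\le \Aut(\Gamma)$ with at most three vertex-orbits, and ccv-graphs whose underlying labelled graph appears among the twenty-five objects $\Delta_1,\ldots,\Delta_{25}$ of Figure~\ref{fig:quotients} under the parameter restrictions (1)--(11).

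The ``if'' direction is almost immediate from the construction. For each of the listed families $\Gamma_i(m;r,s)$, the specified data $(\Delta_i,\lambda,\iota,\zeta)$ is by hypothesis a ccv-graph in the sense of Definition~\ref{def:ccv}, so by definition the cover $\Gamma_i(m;r,s)$ is a connected, simple, cubic graph. The deck group of the cyclic generalised cover (acting on each vertex-fibre as $v_i\mapsto v_{i+1}$) is then a cyclic subgroup of $\Aut(\Gamma_i(m;r,s))$ whose orbits are exactly the fibres of the projection $\pi$ of Definition~\ref{def:cyc}, of which there are at most three because $|\V(\Delta_i)|\le 3$.

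For the ``only if'' direction, let $\Gamma$ be a connected simple cubic graph and $G$ a cyclic subgroup of $\Aut(\Gamma)$ with at most three vertex-orbits, and set $\Delta := \Gamma/G$. By the theory developed in Section~\ref{sec:cyccov}, $\Gamma$ is isomorphic to $\Cyc(\Delta,\lambda,\iota,\zeta)$ for suitable $\lambda,\iota,\zeta$, the resulting cyclic generalised voltage graph being a ccv-graph. As $|\V(\Delta)|\le 3$, it suffices to identify $(\Delta,\lambda)$ with one of the labelled graphs $\Delta_i$. I would enumerate all dart-labelled graphs on at most three vertices with maximum valence at most $3$: the valence formula~\eqref{eq:degree} forces the dart-labels $\lambda(x)$ emanating from a vertex to be positive integers summing to $3$, so at each vertex the local configuration is one of finitely many types. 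The extendability criterion of Lemma~\ref{lem:consistent} and the further ccv-conditions of Proposition~\ref{prop:consistentccv} then prune the resulting finite list to the twenty-five graphs of Figure~\ref{fig:quotients}. For each $\Delta_i$, Lemma~\ref{lem:consistent} determines $\iota$ up to the single free parameter $m = \iota(v_0)$, while Remark~\ref{rem:reduced} allows one to replace each voltage $\zeta(x)$ by its residue modulo $\gcd(\iota(\beg x),\iota(\term x))$, so that $\zeta$ is encoded by at most two further parameters $r$, $s$.

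The last and most substantial step is to determine, for each of the twenty-five labelled graphs, exactly the triples $(m;r,s)$ for which $\Gamma_i(m;r,s)$ is connected and simple. Connectivity of $\Cyc(\Delta,\lambda,\iota,\zeta)$ translates (via the results of Section~\ref{sec:cyccov} and Remark~\ref{rem:index}) into a gcd condition relating $m$ to the voltages carried by a complete set of fundamental cycles of $\Delta_i$ with respect to a chosen spanning tree; evaluating this condition for each $\Delta_i$ produces the clauses $\gcd(m,r)=1$, $\gcd(\frac{m}{2},r)=1$ or $\gcd(m,r,s)=1$ of (1)--(11). Simplicity is verified using Lemma~\ref{lem:adjacency}: one asks whether two distinct darts of $\Delta_i$ with the same endpoints could produce multiple adjacencies between fixed vertices of two fibres, and whether a single dart could produce a loop or a semi-edge in the cover; these translate into the residual constraints $m\ge 3$, $m\ge 4$, $r\ne 0\ne s$, $r\ne s$ and the parity conditions on $m$ that appear in the statement. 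The main obstacle will be this twenty-five-fold case analysis, together with the identifications forced by the inner symmetries of certain $\Delta_i$ (swapping parallel darts, exchanging two semi-edges at a vertex, or interchanging the two non-distinguished vertices), which conflate different voltage triples and therefore require canonical parameter ranges (such as $0<r<s$ in~(7) or $r\ne s$ in~(9)) if each cover is to be named exactly once; verifying that the eleven families are also pairwise disjoint is a further bookkeeping task, and a secondary difficulty is checking exhaustiveness of the list of twenty-five quotients, which is a finite but somewhat delicate enumeration over isomorphism types of graphs on at most three vertices with loops, semi-edges and parallel edges permitted.
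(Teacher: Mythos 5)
Your proposal follows essentially the same route as the paper: quotient $\Gamma$ by $G$, reconstruct it as a cyclic generalised cover via Theorems~\ref{the:lift} and~\ref{theo:corresp}, enumerate the finitely many admissible dart-labelled quotients on at most three vertices using Proposition~\ref{prop:consistentccv} (the paper does this by computer), read off the parameter constraints from the connectivity and simplicity criteria (Theorems~\ref{theo:concyc} and~\ref{prop:simple}), and obtain the converse from the fibre-transitive cyclic deck group of Lemma~\ref{lem:fiberperm}. The only cosmetic difference is that you additionally worry about pairwise disjointness of the eleven families, which the theorem does not actually assert.
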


\section{Generalised covers}
\label{sec:gencov}

In this and next section we prove that cyclic generalised voltage graphs
are a special case of a much more general notion of a {\em generalised voltage graph}. This fact will come useful in Section~\ref{sec:cyccov}, where
we translate several facts about generalised voltage graphs proved in \cite{MPgc} into the language of cyclic generalised voltage graphs.

For a group $G$, we let $S(G)$ denote the set of all subgroups of $G$ and for $g,h\in G$, we let $g^h:=h^{-1}gh$ be the conjugate of $g$ by $h$.
The {\em core} of a subgroup $H$ in a group $G$, denoted $\core_G(H)$ is the intersection $\cap_{x\in G}{H^x}$ of all the conjugates of $H$.
 We can now present the notions of the {\em generalised voltage graph} and of the {\em generalised cover} introduced in \cite{MPgc}.

\begin{definition}
Let $\Delta$ be a connected graph, let $G$ be a group, and 
let $\omega\colon \V(\Delta) \cup \D(\Delta) \to S(G)$ and $\zeta\colon \D(\Delta) \to G$ be two functions such that the following hold for all $x \in \D(\Delta)$:
\begin{align}
\label{eq:volt1} \omega(x) \leq  \omega(\beg_\Delta x);\\ 
\label{eq:volt2} \omega(x) = \omega(x^{-1})^{\zeta(x)};\\
\label{eq:volt3} \zeta(x^{-1})\zeta(x) \in \omega(x). 	
\end{align}

We then say that the quadruple $(\Delta,G,\omega,\zeta)$ is a {\em generalised voltage graph} and we call the functions $\omega$ and $\zeta$ a {\em weight function} and a {\em voltage assignment}, respectively. 
The generalised voltage graph $(\Delta,G,\omega,\zeta)$ is 
{\em faithful} provided that
  $\core_G\big(\bigcap_{x\in \D(\Delta)} \omega(x)\big) = 1.$
\end{definition}

\begin{definition}
\label{def:gencov}
Let $(\Delta,G,\omega,\zeta)$ be a generalised voltage graph and let $\Gamma$ be the graph defined by:
\begin{itemize}
\item $\V(\Gamma) = \{(v,\omega(v)g) \mid g \in G, v \in \V(\Delta)\}$;
\item $\D(\Gamma) = \{(x,\omega(x)g) \mid g \in G, x \in \D(\Delta)\}$;
\item $\beg_{\Gamma}(x, \omega(x)g)=(\beg_{\Delta} (x), \omega(\beg_{\Delta} x)g)$;
\item $\inv_{\Gamma}(x, \omega(x)g)=(\inv_\Delta x, \omega(\inv_\Delta x)\zeta(x) g)$.
\end{itemize}
Then $\Gamma$ is called the {\em generalised cover} arising from $(\Delta,G,\omega,\zeta)$ and is denoted by $\GC(\Delta,G,\omega,\zeta)$.
\end{definition}


The following theorem is the crucial fact of the theory of generalised covers and states that a graph $\Gamma$ admitting a group of automorphisms $G$ can be reconstructed from a voltage graph $(\Gamma/G,G,\omega,\zeta)$ (for some appropriate $\omega$ and $\zeta$). This theorem follows directly from Theorem~14 and Remark~17 of \cite{MPgc}. 

\begin{theorem}
\label{the:lift}
Let $\Gamma$ be a graph and let $G \leq \Aut(\Gamma)$. Then there exist functions $\zeta \colon \D(\Gamma/G) \to G$ and $\omega \colon \V(\Gamma/G) \cup \D(\Gamma/G) \to S(G)$, such that $(\Gamma/G,G,\omega,\zeta)$ is a faithful generalised faithful voltage graph and $\Gamma$ is isomorphic to the associated generalised covering graph $\GC(\Gamma/G,G,\omega,\zeta)$.
\end{theorem}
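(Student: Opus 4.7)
The plan is to reconstruct the voltage data $(\omega,\zeta)$ directly from the $G$-action on $\Gamma$ by choosing transversals for the $G$-orbits, and then to verify that the resulting generalised cover is isomorphic to $\Gamma$. Concretely, for each vertex $v \in \V(\Gamma/G)$ I pick a representative $\tv \in \V(\Gamma)$ with $\tv^G = v$, and for each dart $x \in \D(\Gamma/G)$ I pick a representative $\tx \in \D(\Gamma)$ in the $G$-orbit $x$ satisfying the coherence condition $\beg_\Gamma \tx = \widetilde{\beg x}$; this is possible because $G$ acts transitively on each dart-orbit and $\beg$ is $G$-equivariant. I then set $\omega(v) := G_{\tv}$ and $\omega(x) := G_{\tx}$ (pointwise stabilisers), and let $\zeta(x) \in G$ be any element satisfying $(\widetilde{x^{-1}})^{\zeta(x)} = \tx^{-1}$, which exists since $\widetilde{x^{-1}}$ and $\tx^{-1}$ lie in the same $G$-orbit.

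Next I would verify the three conditions (\ref{eq:volt1})--(\ref{eq:volt3}). Condition (\ref{eq:volt1}) is immediate, since any automorphism fixing $\tx$ also fixes its initial vertex $\widetilde{\beg x}$. For (\ref{eq:volt2}), conjugation of $G_{\widetilde{x^{-1}}}$ by $\zeta(x)$ gives the stabiliser of $(\widetilde{x^{-1}})^{\zeta(x)} = \tx^{-1}$, which equals $G_{\tx} = \omega(x)$ because $\inv_\Gamma$ commutes with the $G$-action. For (\ref{eq:volt3}), applying the defining equations for $\zeta(x^{-1})$ and $\zeta(x)$ in succession to $\tx$ yields $\tx^{\zeta(x^{-1})\zeta(x)} = \tx$, so $\zeta(x^{-1})\zeta(x) \in G_{\tx} = \omega(x)$.

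I would then exhibit the isomorphism $\Phi\colon \GC(\Gamma/G,G,\omega,\zeta) \to \Gamma$ sending $(v,G_{\tv}g) \mapsto \tv^g$ on vertices and $(x,G_{\tx}g) \mapsto \tx^g$ on darts. The orbit--stabiliser theorem ensures that $\Phi$ is well-defined and bijective on each fibre, and direct substitution into Definition~\ref{def:gencov} shows that $\Phi$ commutes with $\beg$ and $\inv$, the latter precisely because of the defining equation for $\zeta(x)$. Faithfulness of $(\Gamma/G,G,\omega,\zeta)$ then follows from the observation that $\core_G\big(\bigcap_{x} G_{\tx}\big)$ equals the pointwise $G$-stabiliser of $\bigcup_{g\in G,\, x\in \D(\Gamma/G)} \{\tx^g\} = \D(\Gamma)$, which is trivial since $G \leq \Aut(\Gamma)$ acts faithfully on $\Gamma$.

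The main obstacle is the coherent choice of transversals: the condition $\beg_\Gamma \tx = \widetilde{\beg x}$ must be arranged simultaneously for every dart $x$, since otherwise (\ref{eq:volt1}) can fail, and one must additionally check that the element $\zeta(x)$, which is determined only modulo $\omega(x^{-1})$, does not affect any of the conclusions up to isomorphism of generalised voltage graphs. Once these bookkeeping issues are settled, the whole argument reduces to rewriting the $G$-action on $\Gamma$ in terms of cosets of stabilisers and conjugation, recovering precisely the construction of Theorem~14 and Remark~17 of \cite{MPgc}.
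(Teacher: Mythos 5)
Your argument is correct in substance, but it is worth noting that the paper does not actually prove this theorem: it is dispatched entirely by citation to Theorem~14 and Remark~17 of \cite{MPgc}. Your reconstruction --- coherent orbit representatives $\tv$ and $\tx$ with $\beg_\Gamma\tx=\widetilde{\beg x}$, weights given by stabilisers $\omega(v)=G_{\tv}$, $\omega(x)=G_{\tx}$, voltages determined by $(\widetilde{x^{-1}})^{\zeta(x)}=\tx^{-1}$, and the fibre bijections $(x,G_{\tx}g)\mapsto \tx^g$ from the orbit--stabiliser theorem --- is precisely the construction underlying the cited result, and the verification of (\ref{eq:volt1})--(\ref{eq:volt3}) and of the compatibility of the map with $\beg$ and $\inv$ goes through exactly as you describe; so your write-up is a valid self-contained replacement for the citation. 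The one step that is slightly too quick is faithfulness: $\core_G\bigl(\bigcap_{x}G_{\tx}\bigr)$ is the kernel of the action of $G$ on $\D(\Gamma)$ only, not on $\V(\Gamma)\cup\D(\Gamma)$, so you still need the observation that an automorphism fixing every dart fixes every non-isolated vertex, and that no vertex of $\Gamma$ is isolated once $\Gamma/G$ is connected and has at least one dart (as the definition of a generalised voltage graph requires $\Gamma/G$ connected, this covers every case relevant to the paper; the truly degenerate dartless case is where the statement itself breaks down, not your argument).
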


\section{Cyclic generalised covers as a special case}
\label{sec:cyccov}

Let $(\Delta,\ZZ_n,\omega,\zeta)$ be a faithful generalised voltage graph. Since $\ZZ_n$ is cyclic, every subgroup $\omega(x) \leq \ZZ_n$, is determined uniquely by its index in $\ZZ_n$. This allows us to define two functions $\lambda$ and $\iota$ such that $(\Delta,\lambda,\iota,\zeta)$ is a cyclic generalised voltage graph and $\GC(\Delta,\ZZ_n,\omega,\zeta) \cong \Cyc(\Delta,\lambda,\iota,\zeta)$. Informally, we can think of a cyclic generalised voltage graph simply as a particular type of a generalised voltage graph. 
 Let us make this more precise.
 

Let $\ZZ_n$ be the cyclic group of integers modulo $n$, and let $(\Delta,\ZZ_n,\omega,\zeta)$ be a generalised voltage graph. Define $\iota: \V(\Delta) \to \ZZ$ and $\lambda: \D(\Delta) \to \ZZ$ by 
\begin{align}
\label{eq:iotaC} \iota(v) & =|\ZZ_n:\omega(v)|; \\
\label{eq:lambdaC} \lambda(x) & = |\omega(\beg x):\omega(x)|.
\end{align}
Now observe that $\omega(v)$ is the subgroup generated by $\iota(v)$, where we view the integer $\iota(v)$ as an element of $\ZZ_n$. Similarly, $\omega(x)$ is generated by its index in $\ZZ_n$, which equals $|\ZZ_n:\omega(x)|=|\ZZ_n:\omega(\beg x)|\cdot|\omega(\beg x):\omega(x)| = \iota(\beg x)\lambda(x)$. Hence, the function $\omega$ can be reconstructed from the integer valued functions $\iota\colon \V(\Delta) \to \ZZ$ and $\lambda\colon \D(\Delta) \to \ZZ$ using the formulas
\begin{align}
\label{eq:omega1}\omega(v) &= \langle \iota(v) \rangle;\\
\label{eq:omega2}\omega(x) &= \langle \iota(\beg x)\lambda(x)\rangle.
\end{align}
Furthermore, if the generalised voltage graph $(\Delta,\ZZ_n,\omega,\zeta)$ is faithful, the order $n$ of $\ZZ_n$ is determined by the functions $\iota$ and $\lambda$, as shown in the lemma below.

\begin{lemma}
\label{lem:cycfaithful}
If $(\Delta,\ZZ_n,\omega,\zeta)$ is a generalised voltage graph, then it is faithful if and only if
\begin{equation}
\label{eq:n}
n = \lcm\{\iota(\beg x)\lambda(x) \mid x \in \D(\Delta)\}.
\end{equation}
\end{lemma}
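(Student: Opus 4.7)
The plan is to reduce faithfulness to a single divisibility condition in the subgroup lattice of $\ZZ_n$. Since $\ZZ_n$ is abelian, every subgroup is normal and so equals its own $\ZZ_n$-core. Hence the faithfulness condition $\core_{\ZZ_n}\bigl(\bigcap_{x\in \D(\Delta)} \omega(x)\bigr) = 1$ collapses to the simpler assertion
\[
\bigcap_{x\in \D(\Delta)} \omega(x) = \{0\},
\]
and the task becomes computing this intersection in terms of $\iota$ and $\lambda$.

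Using (\ref{eq:omega2}), I would rewrite $\omega(x) = \langle a_x \rangle$ with $a_x := \iota(\beg x)\lambda(x)$. By (\ref{eq:iotaC})--(\ref{eq:lambdaC}), this integer equals the index $|\ZZ_n:\omega(x)|$ and therefore divides $n$; consequently $\omega(x)$ is precisely the set of residues in $\ZZ_n$ that are multiples of $a_x$. Invoking the standard fact that, inside a cyclic group, the intersection of subgroups generated by divisors of $n$ is generated by their least common multiple, I would then deduce
\[
\bigcap_{x\in \D(\Delta)} \omega(x) = \langle L \rangle, \qquad L := \lcm\{\iota(\beg x)\lambda(x) : x \in \D(\Delta)\},
\]
where the lcm is taken over the finite set $\D(\Delta)$ and itself divides $n$ since each $a_x$ does.

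Finally, $\langle L \rangle = \{0\}$ in $\ZZ_n$ if and only if $n \mid L$, which combined with $L \mid n$ is equivalent to $L = n$, i.e.\ precisely condition (\ref{eq:n}). The only conceptual observation in the argument is the collapse of the core operation in the abelian setting; everything else is routine bookkeeping in the subgroup lattice of a finite cyclic group, so I do not anticipate any technical obstacle.
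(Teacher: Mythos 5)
Your proof is correct and follows essentially the same route as the paper: both reduce faithfulness to triviality of $\bigcap_x \omega(x)$ using that $\ZZ_n$ is abelian, identify this intersection as the subgroup generated by $\lcm\{\iota(\beg x)\lambda(x)\}$, and conclude that it is trivial exactly when that lcm equals $n$. Your explicit remark that each $\iota(\beg x)\lambda(x)$ divides $n$ (being the index of a subgroup of $\ZZ_n$) is a small clarifying addition, but the argument is the same.
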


\begin{proof}
Let $G=\ZZ_n$.
By definition, $(\Delta,G,\omega,\zeta)$ is faithful if and only if $\core_{G}(\cap_{x \in \D(\Delta)} \omega(x)) = 1$. Since $G$ is abelian and since $\omega(x) = \langle \iota(\beg x)\lambda(x) \rangle$, this is equivalent to the condition that the group $$ H:= \bigcap\limits_{x \in \D(\Delta)} \langle \iota(\beg x)\lambda(x) \rangle$$ is trivial. Observe that $H$ is precisely the subgroup of $\ZZ_n$ generated by  $\lcm\{\iota(\beg x)\lambda(x)  \mid x \in \D(\Delta)\}$. Finally, $|\langle \lcm\{\iota(\beg x)\lambda(x)  \mid x \in \D(\Delta)\}\rangle|= 1$ if and only if $\lcm\{\iota(\beg x)\lambda(x)  \mid x \in \D(\Delta)\} = n$. 
\end{proof}

The discussion so far allows us to define a function $\Phi$ which assigns to each faithful generalised voltage graph $(\Delta,\ZZ_n,\omega,\zeta)$ a cyclic generalised voltage graph $\Phi(\Delta,\ZZ_n,\omega,\zeta):=(\Delta,\lambda,\iota,\zeta)$ where $\iota$ and $\lambda$ are given by (\ref{eq:iotaC}) and (\ref{eq:lambdaC}). Conversely, one can define a mapping $\Psi$ that assigns to each cyclic generalised voltage graph, $(\Delta,\lambda,\iota,\zeta)$ a faithful generalised voltage graph $\Psi(\Delta,\lambda,\iota,\zeta):=(\Delta,\ZZ_n,\omega,\zeta)$, where $\omega$ and $n$ are given by (\ref{eq:omega1}), (\ref{eq:omega2}) and (\ref{eq:n}). The mappings $\Phi$ and $\Psi$ are inverse to each other.

\begin{theorem}
\label{theo:corresp}
Let $(\Delta,\ZZ_n,\omega,\zeta)$ be a generalised voltage graph and let $(\Delta,\lambda,\iota,\zeta)=\Phi(\Delta,\ZZ_n,\omega,\zeta)$. Then $\Cyc(\Delta,\lambda,\iota,\zeta) \cong \GC(\Delta,\ZZ_n,\omega,\zeta)$.
\end{theorem}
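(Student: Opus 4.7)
The plan is to exhibit an explicit isomorphism between the two covers, based on the observation that cosets of a subgroup of $\ZZ_n$ can be canonically identified with residues modulo its index. Since $\ZZ_n$ is cyclic, every subgroup is uniquely determined by its index, so (\ref{eq:omega1}) and (\ref{eq:omega2}) give $\omega(v) = \langle \iota(v) \rangle$ and $\omega(x) = \langle \iota(\beg x)\lambda(x)\rangle$ inside $\ZZ_n$. Consequently, the coset $\omega(v) + g$ depends only on $g \bmod \iota(v)$, and similarly $\omega(x) + g$ depends only on $g \bmod \iota(\beg x)\lambda(x)$. This lets me define
\[
\phi\bigl(v,\ \omega(v) + g\bigr) := v_{\,g \bmod \iota(v)}, \qquad \phi\bigl(x,\ \omega(x) + g\bigr) := x_{\,g \bmod \iota(\beg x)\lambda(x)},
\]
a map from $\GC(\Delta,\ZZ_n,\omega,\zeta)$ to $\Cyc(\Delta,\lambda,\iota,\zeta)$ which is well defined on cosets by the preceding observation.

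Next I would verify that $\phi$ is a bijection and that it commutes with $\beg$. Bijectivity follows because on each fibre $\phi$ is the canonical isomorphism $\ZZ_n/\langle k\rangle \to \ZZ_k$ (with $k = \iota(v)$ or $k = \iota(\beg x)\lambda(x)$), and the fibres on both sides have matching cardinalities $\iota(v)$ and $\iota(\beg x)\lambda(x)$, respectively. Preservation of $\beg$ is then an immediate unwinding of the definitions: $\phi(\beg_{\GC}(x, \omega(x)+g)) = (\beg_\Delta x)_{g \bmod \iota(\beg_\Delta x)}$, whereas $\beg_{\Cyc}(\phi(x, \omega(x)+g))$ first reduces $g$ modulo $\iota(\beg x)\lambda(x)$ and then modulo $\iota(\beg x)$, which yields the same residue because $\iota(\beg x)$ divides $\iota(\beg x)\lambda(x)$.

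The main subtle point is compatibility with $\inv$. Setting $y := \inv_\Delta x$, evaluating both sides gives
\[
\phi\bigl(\inv_{\GC}(x, \omega(x)+g)\bigr) = y_{\,(g+\zeta(x)) \bmod \iota(\beg y)\lambda(y)}, \qquad \inv_{\Cyc}\bigl(\phi(x, \omega(x)+g)\bigr) = y_{\,(g+\zeta(x)) \bmod \iota(\beg x)\lambda(x)},
\]
and these agree because condition (\ref{eq:ratio}) forces $\iota(\beg x)\lambda(x) = \iota(\term x)\lambda(x^{-1}) = \iota(\beg y)\lambda(y)$. I expect this modulus-matching to be the only delicate step, since the rest is routine bookkeeping in the abelian group $\ZZ_n$; condition (\ref{eq:ratio}) is precisely the compatibility baked into the definition of a cyclic generalised voltage graph that guarantees this to work, and condition (\ref{eq:invvolt}), implied by (\ref{eq:volt3}), ensures that $\zeta$ interacts correctly with inverse darts.
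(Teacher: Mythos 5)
Your map is exactly the one used in the paper's proof (the paper defines $\varphi(x,\omega(x)+i)=x_i$ on darts, checks well-definedness via $\omega(x)=\langle\lambda(x)\iota(\beg x)\rangle$, and leaves the remaining verifications as routine), so the proposal is correct and takes essentially the same approach. Your added details---the modulus-matching for $\inv$ via condition (\ref{eq:ratio}) and the identification of (\ref{eq:invvolt}) with (\ref{eq:volt3}) in additive notation---are precisely the ``straightforward'' steps the paper omits.
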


\begin{proof}
Let $\varphi\colon \D(\GC(\Delta,\ZZ_n,\omega,\zeta)) \to \D(\Cyc(\Delta,\lambda,\iota,\zeta))$ be given by
\begin{align*}
\varphi(x,\omega(x) + i) = x_i,
\end{align*}
where the coset of $\omega(x) \leq \ZZ_n$ is written in additive notation and the subindex of $x_i$ is taken modulo $\lambda(x)\iota(\beg x)$. To see that $\varphi$ is well defined suppose $\omega(x)+i = \omega(x)+j$. Then $i - j \in \omega(x)=\langle \lambda(x)\iota(\beg x) \rangle$ which implies that $i \equiv j$ $(\mod \lambda(x)\iota(\beg x))$. This is $x_i = x_j$. It is straightforward to verify that $\varphi$ is indeed an isomorphism.
\end{proof}


We devote the remainder of this section to stating some fundamental properties of cyclic generalised covers. Each proposition hereafter is a special case, where the voltage group $G$ is cyclic, of a more general proposition for a generalised voltage graph $(\Delta,G,\omega,\zeta)$ proved in \cite{MPgc}. We state these results here in the language of cyclic generalised voltage graphs. Even though each of the following propositions is simply a translation of a (special case of a) proposition stated and proved in \cite{MPgc}, we provide a sketch of a proof for Theorems \ref{theo:concyc} and \ref{prop:simple}, as in these cases the translation might not be straightforward.

\begin{lemma}
\label{lem:fiberperm}
\cite[Lemma 18]{MPgc}
Let $(\Delta,\lambda,\iota,\zeta)$ be a cyclic generalised voltage graph, let $n = \lcm\{\iota(\beg x)\lambda(x) \mid x \in \D(\Delta)\}$ and let $\Gamma = \Cyc(\Delta,\lambda,\iota,\zeta)$. For an element $a \in \ZZ_n$ the mapping $f_a:\V(\Gamma) \cup \D(\Gamma) \to \V(\Gamma) \cup \D(\Gamma)$ given by $f_a(x_i)=x_{i+a}$ is an automorphism of $\Gamma$ whose orbits (on vertices and darts) are precisely the fibres of $\Gamma$. Moreover, the group homomorphism $\varphi: \ZZ_n \to \Aut(\Gamma)$ mapping every $a \in \ZZ_n$ to $f_a$ is an embedding.
\end{lemma}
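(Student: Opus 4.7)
The proof plan is to verify, in order, that (i) the formula $f_a(x_i) = x_{i+a}$ makes sense, (ii) it defines a graph automorphism, (iii) the resulting action has the fibres as its orbits, and (iv) the assignment $a \mapsto f_a$ is an injective homomorphism. None of these steps is delicate; the only point worth noting carefully is that the integer $n$ is chosen precisely so that both well-definedness (forcing $n$ to be a multiple of every $\iota(\beg x)\lambda(x)$) and injectivity (forcing $n$ to be no larger than the lcm) hold simultaneously.

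For well-definedness, note that for every vertex $v$ incident with some dart $x$ we have $\iota(v) = \iota(\beg x)$ dividing $\iota(\beg x)\lambda(x)$, and by definition of $n$ as an lcm, $\iota(\beg x)\lambda(x)$ divides $n$. Hence the reduction of an element of $\ZZ_n$ modulo $\iota(v)$ (for vertex indices) or modulo $\iota(\beg x)\lambda(x)$ (for dart indices) is well defined, so $f_a$ is a well-defined self-map of $\V(\Gamma) \cup \D(\Gamma)$. Next, by directly unwinding Definition~\ref{def:cyc}, one checks
\begin{align*}
f_a(\beg_\Gamma x_i) &= f_a((\beg_\Delta x)_i) = (\beg_\Delta x)_{i+a} = \beg_\Gamma(x_{i+a}) = \beg_\Gamma(f_a(x_i)), \\
f_a(\inv_\Gamma x_i) &= f_a((\inv_\Delta x)_{i+\zeta(x)}) = (\inv_\Delta x)_{i+\zeta(x)+a} = \inv_\Gamma(x_{i+a}) = \inv_\Gamma(f_a(x_i)),
\end{align*}
so $f_a$ is a graph morphism. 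An equally direct computation gives $f_a \circ f_b(x_i) = x_{i+a+b} = f_{a+b}(x_i)$, so $\varphi$ is a group homomorphism; in particular $f_a$ admits $f_{-a}$ as its two-sided inverse and is therefore an automorphism of $\Gamma$.

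For the orbit description, observe that $f_a$ always sends an element in the fibre $\fib(x)$ or $\fib(v)$ back into the same fibre, so each orbit of $\varphi(\ZZ_n)$ is contained in a fibre. Conversely, given two elements $x_i, x_j$ (respectively $v_i, v_j$) of the same fibre, the element $a = j - i \in \ZZ_n$ satisfies $f_a(x_i) = x_j$ (resp.\ $f_a(v_i) = v_j$), so fibres are contained in orbits and the two partitions coincide. Finally, suppose $f_a$ is the identity. Then for every dart $x \in \D(\Delta)$ we have $i + a \equiv i \pmod{\iota(\beg x)\lambda(x)}$, i.e.\ $\iota(\beg x)\lambda(x) \mid a$; taking the lcm over all darts yields $n \mid a$, which means $a = 0$ in $\ZZ_n$. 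Hence $\ker \varphi = 0$ and $\varphi$ is an embedding, as claimed.
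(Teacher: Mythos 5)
Your proof is correct. Note that the paper itself offers no proof of this lemma --- it is quoted as a special case of \cite[Lemma 18]{MPgc}, and the authors only sketch proofs for Theorems~\ref{theo:concyc} and~\ref{prop:simple} where the translation is less immediate --- so there is nothing to compare against; your direct verification (well-definedness of the index shifts because $\iota(\beg x)\lambda(x)\mid n$, compatibility with $\beg_\Gamma$ and $\inv_\Gamma$, transitivity on each fibre, and triviality of the kernel via the lcm) supplies exactly the routine argument being cited, with the right reading of ``orbits'' as orbits of the image group $\varphi(\ZZ_n)$.
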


\begin{lemma}
\label{lem:action}
\cite[Lemma 21]{MPgc} Let $(\Delta,\lambda,\iota,\zeta)$ be a cyclic generalised voltage graph and let $n = \lcm\{\iota(\beg x)\lambda(x) \mid x \in \D(\Delta)\}$. For an integer $a$ coprime to $n$ let $\zeta_a(x): \D(\Delta) \to \ZZ_n$ be given by the rule $\zeta_a(x) = \zeta(x)a$. Then $\Cyc(\Delta,\lambda,\iota,\zeta) \cong \Cyc(\Delta,\lambda,\iota,\zeta_a)$.
\end{lemma}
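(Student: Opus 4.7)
The plan is to construct an explicit graph isomorphism $\varphi$ between $\Gamma := \Cyc(\Delta,\lambda,\iota,\zeta)$ and $\Gamma' := \Cyc(\Delta,\lambda,\iota,\zeta_a)$ by scaling every fibre-index by $a$: set $\varphi(v_i) := v_{ai}$ for each vertex $v\in \V(\Delta)$, $i \in \ZZ_{\iota(v)}$, and $\varphi(x_i) := x_{ai}$ for each dart $x \in \D(\Delta)$, $i \in \ZZ_{\lambda(x)\iota(\beg x)}$. Before doing anything with $\varphi$, the first step would be to verify that the quadruple $(\Delta,\lambda,\iota,\zeta_a)$ is actually a cyclic generalised voltage graph, i.e.\ that $\zeta_a$ satisfies condition~(\ref{eq:invvolt}); multiplying the corresponding identity for $\zeta$ by $a$ yields this immediately.

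Next I would check that $\varphi$ is well defined, noting that both $\iota(v)$ and $\lambda(x)\iota(\beg x)$ divide $n$, so that any congruence between indices modulo one of these quantities is preserved under multiplication by $a$. The two morphism axioms would then follow by direct calculation: preservation of $\beg$ is immediate since $\varphi(\beg_\Gamma x_i) = (\beg x)_{ai} = \beg_{\Gamma'}\varphi(x_i)$, while preservation of $\inv$ reduces to the identity $a(i+\zeta(x)) \equiv ai + \zeta_a(x) \pmod{\lambda(x)\iota(\beg x)}$, which is the defining formula for $\inv_{\Gamma'}$ applied to $x_{ai}$. Bijectivity of $\varphi$ is then the statement that multiplication by $a$ permutes each of the cyclic groups $\ZZ_{\iota(v)}$ and $\ZZ_{\lambda(x)\iota(\beg x)}$ indexing the fibres.

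The only mildly delicate point — and hence the main obstacle, such as it is — is ensuring that the unit $a \in \ZZ_n$ remains a unit modulo each of the (possibly smaller) moduli $\iota(v)$ and $\lambda(x)\iota(\beg x)$; this is guaranteed because $n$ was defined as the least common multiple of all the quantities $\lambda(x)\iota(\beg x)$, and every $\iota(v)$ divides $\lambda(x)\iota(\beg x)$ for any dart $x$ emanating from $v$. A cleaner alternative route would be to invoke Theorem~\ref{theo:corresp} to pass to the generalised voltage graph $\Psi(\Delta,\lambda,\iota,\zeta) = (\Delta,\ZZ_n,\omega,\zeta)$ and deduce the result directly from Lemma~21 of \cite{MPgc}, bypassing the explicit construction altogether.
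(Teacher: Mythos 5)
Your proposal is correct. Note, however, that the paper does not actually prove this lemma: it is one of the statements explicitly presented as a mere translation, via the correspondence $\Psi$ of Theorem~\ref{theo:corresp}, of a result proved in \cite{MPgc} (Lemma~21 there), and the authors only supply proof sketches for Theorems~\ref{theo:concyc} and~\ref{prop:simple}, where the translation is less immediate. So the ``cleaner alternative route'' you mention at the end is precisely the paper's route, while your main argument --- the explicit fibre-rescaling isomorphism $v_i \mapsto v_{ai}$, $x_i \mapsto x_{ai}$ --- is a self-contained proof that the paper does not give. Your argument is sound at every step: $\zeta_a$ satisfies (\ref{eq:invvolt}) because multiplying a congruence by $a$ preserves it; the map is well defined and bijective because each modulus $\iota(v)$ and $\lambda(x)\iota(\beg x)$ divides $n$, so $\gcd(a,n)=1$ forces $a$ to be a unit modulo each of them; and compatibility with $\inv$ is exactly the computation $a(i+\zeta(x)) = ai + \zeta_a(x)$, where one should just note (as is guaranteed by (\ref{eq:ratio})) that the fibres of $x$ and $x^{-1}$ have the same size, so rescaling by $a$ is applied consistently on both. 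What the direct construction buys is independence from the machinery of generalised voltage graphs over arbitrary groups; what the citation route buys is brevity and consistency with how the paper treats the neighbouring Lemmas~\ref{lem:fiberperm} and~\ref{lem:coprimemulti}. Either is acceptable here.
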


\begin{lemma}
\cite[Lemma 22]{MPgc}
Let $(\Delta,\lambda,\iota,\zeta)$ be a cyclic generalised voltage graph,
let $\Gamma=\Cyc(\Delta,\lambda,\iota,\zeta)$,
let $\varphi \in \Aut(\Delta)$ and let $a$ be an integer coprime to $n$.
If 
$\iota(v^\varphi) = \iota(v)$ for every $v\in \V(\Delta)$ and $\lambda(x^\varphi)=\lambda(x)$, $\zeta(x^\varphi) = a\zeta(x)$ for every $x\in \D(\Delta)$. 
Then the permutation mapping a dart
$ x_i $ to the dart $(x^\varphi)_{ai}$  for every $x \in \D(\Delta)$
extends to an automorphism of $\Gamma$.
\label{lem:coprimemulti}
\end{lemma}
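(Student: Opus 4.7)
The plan is to define the proposed permutation explicitly on both vertices and darts, verify that it is well-defined as a bijection of $\V(\Gamma)\cup\D(\Gamma)$, and then check the two graph axioms (commuting with $\beg_\Gamma$ and $\inv_\Gamma$) by direct computation, using each of the three hypotheses on $\varphi$ in exactly one place.

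First I would extend the prescription to vertices by $\tilde\varphi(v_i) := (v^\varphi)_{ai}$ for $v\in\V(\Delta)$, $i\in\ZZ_{\iota(v)}$, alongside the given rule $\tilde\varphi(x_i) := (x^\varphi)_{ai}$ on darts. Since $\varphi$ is an automorphism of $\Delta$ with $\iota(v^\varphi)=\iota(v)$ and $\lambda(x^\varphi)=\lambda(x)$, the target fibres have the same sizes as the source fibres, namely $\iota(v)$ and $\lambda(x)\iota(\beg x)$, respectively. Each of these sizes divides $n=\lcm\{\lambda(x)\iota(\beg x):x\in\D(\Delta)\}$ (Lemma~\ref{lem:cycfaithful}), so the hypothesis $\gcd(a,n)=1$ makes $a$ coprime to each modulus; hence multiplication by $a$ is a bijection on each $\ZZ_{\iota(v)}$ and $\ZZ_{\lambda(x)\iota(\beg x)}$. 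Combined with the bijectivity of $\varphi$ on $\V(\Delta)$ and $\D(\Delta)$, this shows $\tilde\varphi$ is a well-defined bijection.

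Next I would verify the two functorial conditions of a graph morphism. For $\beg_\Gamma$, from Definition~\ref{def:cyc},
\begin{align*}
\beg_\Gamma(\tilde\varphi(x_i)) &= \beg_\Gamma((x^\varphi)_{ai}) = (\beg_\Delta x^\varphi)_{ai} = ((\beg_\Delta x)^\varphi)_{ai} \\
&= \tilde\varphi((\beg_\Delta x)_i) = \tilde\varphi(\beg_\Gamma(x_i)),
\end{align*}
using only that $\varphi$ commutes with $\beg_\Delta$. For $\inv_\Gamma$, on one side
\[
\tilde\varphi(\inv_\Gamma(x_i)) = \tilde\varphi\bigl((\inv_\Delta x)_{i+\zeta(x)}\bigr) = \bigl((\inv_\Delta x)^\varphi\bigr)_{a(i+\zeta(x))} = \bigl(\inv_\Delta(x^\varphi)\bigr)_{ai+a\zeta(x)},
\]
while on the other
\[
\inv_\Gamma(\tilde\varphi(x_i)) = \inv_\Gamma\bigl((x^\varphi)_{ai}\bigr) = \bigl(\inv_\Delta(x^\varphi)\bigr)_{ai+\zeta(x^\varphi)}.
\]
These agree precisely because of the hypothesis $\zeta(x^\varphi)=a\zeta(x)$, with the equality read modulo $\lambda(x^\varphi)\iota(\beg x^\varphi)=\lambda(x)\iota(\beg x)$.

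There is no real obstacle here; the result is essentially a book-keeping verification. The only point worth flagging is the consistency of moduli: one must observe that all three hypotheses together guarantee that, for each dart $x$, the index modulus in the source fibre of $x$ and in the target fibre of $x^\varphi$ coincide, so that the scaling $i\mapsto ai$ is simultaneously a well-defined bijection on every fibre and sends the offset $\zeta(x)$ to $\zeta(x^\varphi)$. Given this, being an automorphism of $\Gamma$ follows from bijectivity together with the two identities above.
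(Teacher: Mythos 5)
Your proof is correct. The paper itself gives no argument for this lemma---it is quoted as a special case of Lemma~22 of the reference on generalised voltage graphs---so your direct verification in the cyclic setting is exactly the computation one would expect: extend to vertices by $v_i\mapsto (v^\varphi)_{ai}$, use $\gcd(a,n)=1$ (together with the fact that every fibre size $\iota(v)$ or $\lambda(x)\iota(\beg x)$ divides $n$) for bijectivity, and check commutation with $\beg_\Gamma$ and $\inv_\Gamma$, the latter being precisely where $\zeta(x^\varphi)=a\zeta(x)$ is needed. Your closing remark about the consistency of moduli (that $\lambda(x^\varphi)\iota(\beg x^\varphi)=\lambda(x)\iota(\beg x)$, so the scaling $i\mapsto ai$ and the congruence for $\zeta$ live in the same $\ZZ_{\lambda(x)\iota(\beg x)}$) is the one point that genuinely requires all three hypotheses, and you have handled it correctly.
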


\begin{theorem}
\cite[Theorem 26]{MPgc}
\label{theo:tnormalised}
Let $(\Delta,\lambda,\iota,\zeta)$ be a cyclic voltage graph and let $T$ be a spanning tree of $\Delta$. Then there exists a voltage assignment $\zeta'$ such that $\zeta'(x)=0$ for all $x \in \D(T)$ and $\Cyc(\Delta,\lambda,\iota,\zeta) \cong \Cyc(\Delta,\lambda,\iota,\zeta')$.
\end{theorem}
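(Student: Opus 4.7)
The plan is to perform a standard gauge transformation along the spanning tree $T$: I would build a shift function on the vertices that records how to rotate each fibre, adjust $\zeta$ to $\zeta'$ by the coboundary of this shift, and then write down the corresponding isomorphism of covers explicitly.

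First I would root $T$ at an arbitrary vertex $v_0$ and define $s \colon \V(\Delta) \to \ZZ$ recursively by $s(v_0) := 0$ and, for every other vertex $v$, $s(v) := s(\beg e_v) - \zeta(e_v)$, where $e_v \in \D(T)$ is the unique tree dart on the $v_0 v$-path in $T$ terminating at $v$. I would then set
$$\zeta'(x) := \zeta(x) + s(\term x) - s(\beg x) \quad \text{for every } x \in \D(\Delta).$$
Property (\ref{eq:invvolt}) for $\zeta'$ is immediate from the identity $\zeta'(x^{-1}) + \zeta'(x) = \zeta(x^{-1}) + \zeta(x)$ together with the same property for $\zeta$. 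For a forward tree dart $x = e_v$ from $u$ to $v$, the recursive definition of $s(v)$ gives $\zeta'(e_v) = 0$, while (\ref{eq:invvolt}) then forces $\zeta'(e_v^{-1}) \equiv 0 \pmod{\lambda(e_v^{-1})\iota(v)}$; since the cover depends on voltages only modulo these quantities, I may replace $\zeta'(e_v^{-1})$ by $0$ to ensure that $\zeta'$ vanishes on every dart of $\D(T)$.

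Writing $\Gamma := \Cyc(\Delta,\lambda,\iota,\zeta)$ and $\Gamma' := \Cyc(\Delta,\lambda,\iota,\zeta')$, I would finally define a map $\varphi \colon \V(\Gamma) \cup \D(\Gamma) \to \V(\Gamma') \cup \D(\Gamma')$ by
$$\varphi(v_i) := v_{i + s(v)}, \qquad \varphi(x_j) := x_{j + s(\beg x)},$$
with the indices reduced modulo $\iota(v)$ and $\lambda(x)\iota(\beg x)$ respectively. The map shifts each fibre bijectively onto the corresponding fibre of $\Gamma'$; commutation with $\beg$ is automatic, and commutation with $\inv$ reduces to the congruence $j + s(\beg x) + \zeta'(x) \equiv j + \zeta(x) + s(\term x) \pmod{\lambda(x)\iota(\beg x)}$, which holds by the very definition of $\zeta'(x)$ as an equality of integers.

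The only subtlety I anticipate is bookkeeping with the different moduli: the congruence needed to show that $\varphi$ preserves $\inv$ is modulo $\lambda(x)\iota(\beg x)$, while $s(\term x)$ is naturally only defined modulo $\iota(\term x)$. Defining $\zeta'(x)$ as an honest integer rather than a residue class side-steps this issue, since the equality $\zeta'(x) = \zeta(x) + s(\term x) - s(\beg x)$ then holds modulo every modulus. Alternatively one could deduce the theorem from \cite[Theorem~26]{MPgc} via the correspondence $\Phi,\Psi$ of Theorem~\ref{theo:corresp}, but the direct construction above is short and stays entirely within the cyclic framework.
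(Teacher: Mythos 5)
Your proof is correct, but it takes a different route from the paper: the paper offers no independent argument for this statement, instead deriving it as the cyclic special case of \cite[Theorem 26]{MPgc} via the correspondence $\Phi,\Psi$ of Theorem~\ref{theo:corresp} (it explicitly sketches translations only for Theorems~\ref{theo:concyc} and~\ref{prop:simple}, treating this one as a routine import). You instead give a self-contained gauge-transformation argument entirely inside the cyclic framework, and it checks out: the shift $s$ is well defined by recursion along the rooted tree; $\zeta'(e_v)=0$ on forward tree darts by construction; condition (\ref{eq:invvolt}) for $\zeta'$ follows since the coboundary terms cancel in $\zeta'(x)+\zeta'(x^{-1})$; the replacement of $\zeta'(e_v^{-1})$ by $0$ is legitimate because (\ref{eq:invvolt}) together with (\ref{eq:ratio}) gives $\zeta(e_v^{-1})+\zeta(e_v)\equiv 0$ modulo $\lambda(e_v^{-1})\iota(\term e_v)$, the modulus actually used in Definition~\ref{def:cyc}; and the fibre-shift map $\varphi$ commutes with $\beg$ and $\inv$ exactly as you compute (the $\inv$-check for the adjusted reverse tree darts again reduces to (\ref{eq:invvolt})). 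Your handling of the moduli --- defining $s$ and $\zeta'$ as honest integers so that the one congruence needed holds modulo every relevant quantity --- is precisely the right way to avoid the bookkeeping trap. What each approach buys: the paper's citation keeps all structural results flowing uniformly from the general theory of \cite{MPgc}, at the cost of opacity for the reader; your direct proof is elementary, verifiable on the spot, and makes visible that $T$-normalisation is nothing more than a compatible relabelling of the fibres.
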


\begin{definition}
A voltage assignment in which $\zeta(x) = 0$  for all darts $x$ belonging to a prescribed spanning tree $T$ (as $\zeta'$ is in the lemma above) is said to be $T$-normalised.
\end{definition}

\begin{theorem}
\cite[Theorem 32]{MPgc}
\label{theo:concyc}
Let $(\Delta,\lambda,\iota,\zeta)$ be a cyclic generalised voltage graph where $\zeta$ is $T$-normalised for some spanning tree $T$ of $\Delta$. Let $A=\{\zeta(x) | x \in \D(\Delta)\}$ and $B=\{\iota(v) | v \in \V(\Delta)\}$. Then, $\Cyc(\Delta,\lambda,\iota,\zeta)$ is connected if and only if $\gcd(A \cup B)=1$.
\end{theorem}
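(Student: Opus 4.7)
The plan is to argue both directions directly, using the cyclic subgroup of $\Aut(\Gamma)$ provided by Lemma~\ref{lem:fiberperm} (where $\Gamma = \Cyc(\Delta,\lambda,\iota,\zeta)$ and $n = \lcm\{\iota(\beg x)\lambda(x) : x \in \D(\Delta)\}$), together with the adjacency rule of Lemma~\ref{lem:adjacency}.

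For the direction ``$\Gamma$ connected $\Rightarrow \gcd(A \cup B)=1$'' I would prove the contrapositive. Set $d := \gcd(A\cup B)$ and assume $d > 1$. Partition $\V(\Gamma)$ into the classes $V_r := \{w_i \in \V(\Gamma) : i \equiv r \pmod d\}$ for $r \in \{0,1,\ldots,d-1\}$; this is well-defined because $d$ divides every $\iota(w)$. By Lemma~\ref{lem:adjacency}, if $u_i$ is adjacent to $v_j$ then some dart $x : u \to v$ in $\Delta$ satisfies $j \equiv i + \zeta(x) \pmod{\gcd(\iota(u),\iota(v))}$; reducing modulo $d$, which divides both $\gcd(\iota(u),\iota(v))$ and $\zeta(x)$, yields $j \equiv i \pmod d$. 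Hence the partition is adjacency-preserving, and since each $V_r$ is non-empty (each fibre $\fib(w)$ meets every class as $\iota(w) \geq d$), $\Gamma$ has at least $d \geq 2$ connected components.

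For the converse, assume $d = 1$. Fix the root $\bar v$ of $T$, let $C$ be the connected component of $\bar v_0$ in $\Gamma$, and set $H := \{a \in \ZZ_n : f_a(C) = C\}$, which is a subgroup of $\ZZ_n$. Since $\zeta$ is $T$-normalised, the unique $T$-path from $\bar v$ to any $v \in \V(\Delta)$ lifts, by choosing the zero-indexed dart of each tree fibre at every step, to a walk from $\bar v_0$ to $v_0$ in $\Gamma$; hence $v_0 \in C$ for every $v \in \V(\Delta)$. This immediately gives $\iota(v) \in H$, because $f_{\iota(v)}$ fixes $v_0 \in C$ and thus preserves its component. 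Moreover, for any dart $x \in \D(\Delta)$ from $u$ to $v$, the dart $(x)_0 \in \fib(x)$ joins $u_0 \in C$ to $v_{\zeta(x) \bmod \iota(v)}$, placing the latter in $C$; since $f_{\zeta(x)}(v_0) = v_{\zeta(x) \bmod \iota(v)}$ now lies in $C$ together with $v_0$, we conclude $f_{\zeta(x)}(C) = C$, i.e.\ $\zeta(x) \in H$. Thus $A \cup B \subseteq H$. Because each $\iota(v) \in B$ divides $n$, we have $\gcd(A\cup B) \mid n$, and so the subgroup of $\ZZ_n$ generated by $A\cup B$ equals $\langle \gcd(A\cup B)\rangle$; under the hypothesis $\gcd(A\cup B)=1$ this is all of $\ZZ_n$, forcing $H = \ZZ_n$. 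Finally, any vertex $w_k \in \V(\Gamma)$ satisfies $w_k = f_k(w_0)$ with $w_0 \in C$ and $f_k(C) = C$, so $w_k \in C$ and $\Gamma$ is connected.

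The most delicate step is the lifting of the spanning-tree path: one must verify that for every tree dart $x$ the zero lift $(x)_0 \in \fib(x)$ genuinely starts at $(\beg x)_0$ and ends at $(\term x)_0$, which uses simultaneously $\zeta(x)=0$ and the compatibility $\lambda(x)\iota(\beg x) = \lambda(x^{-1})\iota(\term x)$. Once this is in hand, the $\ZZ_n$-orbit bookkeeping that turns the inclusion $A\cup B \subseteq H$ into $H = \ZZ_n$ is straightforward.
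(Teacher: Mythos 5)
Your proof is correct, but it takes a genuinely different route from the paper's. The paper proves this statement purely by translation: it applies $\Psi$ to pass to the faithful generalised voltage graph $(\Delta,\ZZ_n,\omega,\zeta)$, invokes Theorem~32 of \cite{MPgc} (connectivity of the generalised cover is equivalent to $\ZZ_n=\langle A,\bar B\rangle$ with $\bar B=\{\omega(v)\}$), and then observes that since $\omega(v)=\langle\iota(v)\rangle$ and $n$ is a multiple of every $\iota(v)$, the condition collapses to $\gcd(A\cup B)=1$. You instead give a self-contained argument inside the cyclic cover itself: for one direction, the adjacency-preserving partition of $\V(\Gamma)$ by residues modulo $d=\gcd(A\cup B)$ (well-defined precisely because $d$ divides every $\iota(v)$, and invariant by Lemma~\ref{lem:adjacency} because $d$ divides every $\zeta(x)$ and every $\gcd(\iota(u),\iota(v))$); for the other, the lift of the $T$-paths to reach every $v_0$, followed by the observation that the setwise stabiliser $H\le\ZZ_n$ of the component of $\bar v_0$ contains $A\cup B$ and hence is all of $\ZZ_n$ when $\gcd(A\cup B)=1$. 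Both directions are sound, including the delicate point you flag about the zero lift of a tree dart genuinely running from $(\beg x)_0$ to $(\term x)_0$. What your approach buys is independence from the external reference \cite{MPgc} and an explicit description of the components (the residue classes modulo $\gcd(A\cup B)$) when the cover is disconnected; what the paper's approach buys is brevity and uniformity with the other translated results in Section~\ref{sec:cyccov}.
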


\begin{proof}
Let $(\Delta,\ZZ_n,\omega,\zeta) = \Psi(\Delta,\lambda,\iota,\zeta)$ and let $\bar{\Gamma}=\GC(\Delta,\ZZ_n,\omega,\zeta)$. Then $\Gamma$ is connected if and only if $\bar{\Gamma}$ is connected. By Theorem 32 of \cite{MPgc}, $\bar{\Gamma}$ is connected if and only if $\ZZ_n = \langle A, \bar{B}\rangle$ where $\bar{B} = \{\omega(v) | v \in \V(\Delta)\}$ and  $n = \lcm\{\iota(\beg x)\lambda(x) \mid x \in \D(\Delta)\}$. However, $\omega(v)$ is precisely the subgroup of $\ZZ_n$ generated by $\iota(v)$. Hence $\bar{\Gamma}$ is connected if and only if $\ZZ_n = \langle A, B \rangle$. This equality, in turn, holds if and only if $\gcd(\{n\} \cup A \cup B) = 1$. Since $n$ is a multiple of $\iota(v)$ for all $v \in \V(\Delta)$, we see that $\gcd(\{n\} \cup A \cup B) = \gcd(A \cup B)$. This completes the proof. 
\end{proof}


\begin{theorem}
\cite[Theorem 37]{MPgc}
\label{prop:simple}
Let $(\Delta,\lambda,\iota,\zeta)$ be a cyclic generalised voltage graph and let $\Gamma=\Cyc(\Delta,\lambda,\iota,\zeta)$. Then $\Gamma$ is a simple graph if and only if for all darts $x\in \D(\Delta)$
all of the following conditions hold:
\begin{enumerate}
\item $\gcd(\lambda(x),\lambda(x^{-1})) = 1$ for all $x \in \D(\Delta)$.
\item $\zeta(x) \not \equiv \zeta(y)$ $(\mod \gcd(\iota(\beg x),\iota(\term x)))$ for any two parallel darts $x,y \in \D(\Delta)$.
\item $\zeta(x) \not \equiv 0$ $(\mod \iota(\beg x))$ for all darts $x$ in a semi-edge.
\end{enumerate}
\end{theorem}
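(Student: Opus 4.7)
The plan is to analyze the three ways in which $\Gamma = \Cyc(\Delta,\lambda,\iota,\zeta)$ can fail to be simple---that is, by containing a semi-edge, a loop, or a pair of parallel edges---and to match each failure mode with the violation of one of the three conditions. The main tools are the explicit dart-level formulas $\beg_\Gamma x_i = (\beg_\Delta x)_i$ and $\inv_\Gamma x_i = (\inv_\Delta x)_{i+\zeta(x)}$ from Definition~\ref{def:cyc}, together with the B\'ezout/CRT step already used in the proof of Lemma~\ref{lem:adjacency}.

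First I would deal with semi-edges and loops. A dart $x_i$ of $\Gamma$ is a semi-edge if and only if $\inv_\Delta x = x$ and $\zeta(x) \equiv 0 \pmod{\lambda(x)\iota(\beg x)}$; since $x=x^{-1}$ forces $\lambda(x)=\lambda(x^{-1})$, condition (1) gives $\lambda(x)=1$ for any dart in a semi-edge of $\Delta$, reducing the above to $\zeta(x)\equiv 0 \pmod{\iota(\beg x)}$, which is exactly what (3) forbids. Similarly, $x_i$ lies in a loop of $\Gamma$ if and only if $\beg_\Delta x = \term_\Delta x$, $\zeta(x) \equiv 0 \pmod{\iota(\beg x)}$, and $x_i \neq \inv_\Gamma x_i$; semi-edge darts have just been ruled out, and for a loop dart $x \neq x^{-1}$ one applies (2) to the parallel pair $x, x^{-1}$ to obtain $\zeta(x) \not\equiv \zeta(x^{-1}) \pmod{\iota(\beg x)}$, which combined with $\zeta(x^{-1}) \equiv -\zeta(x)$ rules out $\zeta(x) \equiv 0 \pmod{\iota(\beg x)}$.

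For parallel edges, a pair of them arises from two darts $x_i, y_j$ of $\Gamma$ that are parallel but underlie distinct edges. If $x = y$, a short counting argument in $\ZZ_{\lambda(x)\iota(\beg x)}$ shows that the number of indices $j$ with $\beg_\Gamma x_j = \beg_\Gamma x_i$ and $\term_\Gamma x_j = \term_\Gamma x_i$ equals $\gcd(\lambda(x),\lambda(x^{-1}))$, and one checks that once this gcd is at least $2$ at least two of the resulting darts lie in distinct edges, forcing parallel edges in $\Gamma$ unless (1) holds. If $x \neq y$ are two parallel darts of $\Delta$, then solving the simultaneous congruences $i \equiv j \pmod{\iota(\beg x)}$ and $i+\zeta(x) \equiv j+\zeta(y) \pmod{\iota(\term x)}$ (via the same B\'ezout/CRT step used in the proof of Lemma~\ref{lem:adjacency}) yields a solution if and only if $\zeta(x) \equiv \zeta(y) \pmod{\gcd(\iota(\beg x),\iota(\term x))}$, which is precisely what (2) forbids.

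The main obstacle I anticipate is the careful bookkeeping of the degenerate cases in which $x$ is itself a loop or semi-edge dart: there $x$ and $x^{-1}$ are themselves parallel in $\Delta$, so conditions (1) and (2) interact, and when counting same-fibre parallel darts one must check that the dart $x_{i+\zeta(x)} = \inv_\Gamma x_i$ is not erroneously counted as yielding a distinct edge. Once these overlaps are handled, collecting the four arguments above in both directions yields the equivalence asserted in the theorem.
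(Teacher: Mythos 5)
Your proposal is correct in outline, but it takes a genuinely different route from the paper. The paper does not prove Theorem~\ref{prop:simple} from scratch: it applies the translation functor $\Psi$ to obtain a faithful generalised voltage graph $(\Delta,\ZZ_n,\omega,\zeta)$, invokes Theorem~37 of \cite{MPgc} to get three conditions (1')--(3') phrased in terms of the subgroups $\langle a\rangle$, $\langle b\rangle$, $\langle c\rangle$ of $\ZZ_n$ (where $a=\iota(\beg x)$, $b=\iota(\term x)$, $c=a\lambda(x)$), and then spends the proof showing that each primed condition is equivalent to the corresponding numbered condition --- e.g.\ that $\langle c\rangle=\langle a\rangle\cap\langle b\rangle$ iff $\gcd(\lambda(x),\lambda(x^{-1}))=1$. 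You instead argue directly from Definition~\ref{def:cyc}, classifying the three failure modes of simplicity (semi-edges, loops, parallel edges of $\Gamma$) and matching each to a violated condition via the same B\'ezout/CRT step as in Lemma~\ref{lem:adjacency}. What each buys: the paper's argument is shorter given the machinery of Sections~5--6 and makes the logical dependence on \cite{MPgc} explicit (the theorem is, after all, cited as a special case of that result); yours is self-contained and more transparent --- in particular your identification of $\gcd(\lambda(x),\lambda(x^{-1}))$ as the exact number of mutually parallel darts of $\Gamma$ within a single dart-fibre explains condition~(1) in a way the coset computation does not. Your computation checks out ($N/\lcm(\iota(u),\iota(v))=\gcd(\lambda(x),\lambda(x^{-1}))$ where $N=\lambda(x)\iota(u)$ is the fibre size), and you correctly flag the only delicate points: for a semi-edge dart $x=x^{-1}$ with $\lambda(x)\ge 2$ the parallel darts in $\fib(x)$ may pair up into loops rather than distinct edges (but the cover is non-simple either way), and for a loop dart the pair $x,x^{-1}$ is itself parallel, so condition~(2) applied to that pair is what excludes loops in the cover. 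Provided those cases are written out, your argument is a complete and valid alternative proof.
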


\begin{proof}
Let $(\Delta,\ZZ_n,\omega,\zeta) = \Psi(\Delta,\lambda,\iota,\zeta)$ and let $\bar{\Gamma}=\GC(\Delta,\ZZ_n,\omega,\zeta)$. Then $\Gamma$ is simple if and only if $\bar{\Gamma}$ is simple. Let $x \in \D(\Delta)$. Set $a= \iota(\beg x)$, $b = \iota(\term x)$ and $c = a\lambda(x)$. Note that $a$, $b$ and $c$ are the smallest integers such that $\omega(\beg x) = \langle a \rangle$, $\omega(\term x) = \langle b \rangle$ and $\omega(x) = \langle c \rangle$. By Theorem 37 of \cite{MPgc} (and by the fact that $\ZZ_n$ is abelian), $\bar{\Gamma}$ is simple if and only if the three following conditions hold:
\begin{itemize}
\item[(1')] $h \notin \langle b \rangle$ for all $h \in \langle a \rangle \setminus \langle c \rangle$;         
\item[(2')] $h + \zeta(y) - \zeta(x) \notin \langle c \rangle$
          for all darts $y$, $y\not = x$, which are parallel to $x$ in $\Delta$
          and for all $h\in \langle a \rangle$;
\item[(3')] $\zeta(x)\not \in \langle a \rangle$ if $x=x^{-1}$.
\end{itemize} 
We will show that items (1), (2) and (3) of Theorem \ref{prop:simple} are respectively equivalent to items (1'), (2') and (3') above.

Suppose (1') holds. Then, since $\ZZ_n$ is abelian, for all $h \in \langle a \rangle - \langle c \rangle$ we have $h \notin \langle b \rangle$. From (\ref{eq:volt1}) (and again from the fact that $\ZZ_n$ is abelian), $\langle c \rangle \leq \langle a \rangle \cap \langle b \rangle$ and so $\langle c \rangle = \langle a \rangle \cap \langle b \rangle = \langle \lcm(a,b) \rangle$. This is $c = \lcm(a,b)$.  Now, $c = a\cdot\lambda(x)$ and $b = a\cdot\lambda(x)/\lambda(x^{-1})$ by equation (\ref{eq:ratio}). Hence $c = \lcm(a,b)$ if and only if $ a\cdot\lambda(x) = \lcm(a,a\cdot\lambda(x)/\lambda(x^{-1}))$. The latter equality holds if and only if $\gcd(\lambda(x),\lambda(x^{-1}))=1$. Therefore (1) holds if and only if (1'). 

Now, let $y \in \D(\Delta)$ be parallel to $x$. Suppose (2') holds. Then, for all $h \in \langle a \rangle$, $h + \zeta(y) - \zeta(x) \notin \langle b \rangle$. In particular, $ab \in \langle a \rangle$ and so $b$ cannot divide $\zeta(y) - \zeta(x)$. It follows that $\zeta(x) \not \equiv \zeta(y)$ modulo $\gcd(a,b)$. Finally, recall that $\gcd(a,b) = \gcd(\iota(\beg x),\iota(\term x))$. Thus (2) holds. Conversely, suppose $\zeta(x) \not \equiv \zeta(y)$ $\mod (\gcd(a,b))$. If for some for some $h \in \langle a \rangle$ we have $h + \zeta(y) - \zeta(x) \in \langle b \rangle$, then, since $\gcd(a,b)$ divides both $h$ and $b$, $\gcd(a,b)$ must also divide $\zeta(y) - \zeta(x)$, a contradiction. Thus (2') holds. We conclude that (2) is equivalent to (2').

Finally, it is a plain observation that (3) is equivalent to (3'). 
\end{proof}

\section{ccv-graphs and the proof of Theorem~\ref{the:main}}
\label{sec:ccv}

\subsection{ccv-graphs}

Recall that a ccv-graph is a cyclic generalised voltage graph $(\Delta,\lambda,\iota,\zeta)$ such that its covering graph  is simple, connected and cubic; see Definition~\ref{def:ccv}. In view of Theorem~\ref{theo:tnormalised}, we will assume that $\zeta$ is $T$-normalised for some spanning tree $T$.
Furthermore, in light of the results in the preceding section, a cyclic generalised voltage graph $(\Delta,\lambda,\iota,\zeta)$  is a $\ccv$-graph if and only if $\zeta$ and $\iota$ agree with Theorems \ref{theo:concyc} and \ref{prop:simple}, and if for all $v \in \V(\Delta)$ we have 
\begin{align}
\label{eq:cubic}
\sum\limits_{x \in \Delta(v)} \lambda(x) =3;
\end{align} 

see formula \ref{eq:degree}. Let $(\Delta,\lambda)$ be a dart-labelled graph and recall that $(\Delta,\lambda)$ can be extended to a cyclic generalised voltage graph if there are functions $\iota$ and $\zeta$ satisfying (\ref{eq:ratio}) and (\ref{eq:invvolt}), respectively. It is a plain observation that the cyclic generalised voltage graph $(\Delta,\lambda, \iota, \zeta)$ thus obtained might not be a ccv-graph (this is, the covering graph $\Cyc(\Delta,\lambda,\iota,\zeta)$ need not be simple, connected or cubic). Then, in order to prove Theorem \ref{the:main}, we must determine sufficient and necessary conditions on the dart-labelling $\lambda$ for $(\Delta,\lambda)$ to be extendable to a ccv-graph. 

\begin{proposition}
\label{prop:consistentccv}
Let $(\Delta,\lambda)$ be a connected dart-labelled graph and suppose it is extendable. Then $(\Delta,\lambda)$ can be extended to a ccv-graph $(\Delta,\lambda,\iota,\zeta)$ if and only if the following holds:
\begin{enumerate} 

\item $\sum\limits_{x \in \Delta(v)} \lambda(x) = 3$ for all vertices $v \in \V(\Delta)$.
\item $\lambda(x)=\lambda(x^{-1})$ implies $\lambda(x)=1$.
\item $\lambda(x) = \lambda(y) = 1$ for any two parallel darts $x$ and $y$.
\item $\lambda(x) = 1$ for every dart $x$ underlying a semi-edge.

\end{enumerate}
\end{proposition}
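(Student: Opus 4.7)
The proof is naturally divided into the two implications. For necessity, I would derive each of (1)--(4) directly from the ccv-graph hypothesis. Condition (1) is immediate from formula~(\ref{eq:degree}) and the cubic property of the cover. Conditions (2) and (4) are both consequences of Theorem~\ref{prop:simple}(1): for (4) one has $x=x^{-1}$, hence automatically $\lambda(x)=\lambda(x^{-1})$, and in both situations this equality combined with $\gcd(\lambda(x),\lambda(x^{-1}))=1$ forces $\lambda(x)=1$. For condition (3), let $x$ and $y$ be two parallel darts with $\beg x = \beg y =u$ and $\term x = \term y = v$. The extendability relation~(\ref{eq:ratio}) gives
\[
\tfrac{\lambda(x)}{\lambda(x^{-1})} \;=\; \tfrac{\iota(v)}{\iota(u)} \;=\; \tfrac{\lambda(y)}{\lambda(y^{-1})},
\]
and since each of the two outer fractions is in lowest terms by Theorem~\ref{prop:simple}(1), I conclude $\lambda(x)=\lambda(y)$ and $\lambda(x^{-1})=\lambda(y^{-1})$. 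The cubic condition (1) applied at $u$ then gives $2\lambda(x) \le 3$, forcing $\lambda(x)=\lambda(y)=1$.

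For sufficiency, the first observation is that (1)--(4) alone imply the condition $\gcd(\lambda(x),\lambda(x^{-1}))=1$ of Theorem~\ref{prop:simple}(1): (1) bounds both values by $3$, and the only pairs in $\{1,2,3\}^{2}$ of gcd larger than $1$, namely $(2,2)$ and $(3,3)$, are excluded by (2). Since $(\Delta,\lambda)$ is extendable, Lemma~\ref{lem:consistent} yields a minimal index function $\iota_0$, and by Remark~\ref{rem:index} every valid $\iota$ has the form $c\,\iota_0$ with $c\in\NN$. The plan is to pick $c$ so that $\iota(v) = c\iota_0(v)$ is even whenever $v$ is incident to a semi-edge and $\iota(u)\geq k$ whenever $u$ is joined to some other vertex by $k\leq 3$ parallel darts (note that by~(3), applied both to the parallel darts $x,y$ and to their parallel inverses $x^{-1},y^{-1}$, all these darts have $\lambda=1$, so $\iota$ is constant on the endpoints); any $c$ divisible by $6$ works. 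With $\iota$ fixed, I would then fix a spanning tree $T$ of $\Delta$, set $\zeta(x)=0$ for $x\in \D(T)$, set $\zeta(x) = \iota(\beg x)/2$ on each semi-edge, and assign pairwise distinct residues modulo $\iota(u)$ on any set of parallel cotree darts leaving $u$; on the remaining cotree darts any value consistent with~(\ref{eq:invvolt}) works. Conditions (2) and (3) of Theorem~\ref{prop:simple} are then satisfied, so the resulting cover is simple, and it is cubic by~(\ref{eq:degree}).

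The main obstacle is connectivity, which by Theorem~\ref{theo:concyc} requires $\gcd(A\cup B)=1$, where $A$ is the image of $\zeta$ and $B$ the image of $\iota$. Scaling $\iota_0$ by $c$ multiplies every index by $c$, so $\gcd(B) = c$ by Remark~\ref{rem:index}, and naive voltage choices may leave $\gcd(A\cup B)>1$. I would handle this by using the freedom on cotree darts: if $\Delta$ has any cotree dart that is not a semi-edge, its voltage can be taken to be $1$, which immediately forces $\gcd(A\cup B)=1$ without violating any simplicity requirement. When $\Delta$ is a tree whose only ``extra'' darts are semi-edges, I would instead pick the smallest admissible $c$ (either $c=1$ or $c=2$, depending on the parity of $\iota_0$ on the semi-edge endpoints), and then a short verification based on Remark~\ref{rem:index}, which ensures $\gcd\{\iota_0(v) : v\in\V(\Delta)\}=1$, shows that the semi-edge voltages $\iota(\beg x)/2$ together with the indices have gcd~$1$. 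This final case analysis is routine and completes the proof.
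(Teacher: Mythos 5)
Your proof is correct and follows essentially the same route as the paper's: necessity via the degree formula~(\ref{eq:degree}), Theorem~\ref{prop:simple} and the ratio condition~(\ref{eq:ratio}), and sufficiency by scaling the canonical index function of Lemma~\ref{lem:consistent} and taking a tree-normalised voltage equal to $\iota(\beg x)/2$ on semi-edges. The only substantive differences are that you prove (3) by a reduced-fraction argument where the paper rules out a $[1,2]$-pair of parallel darts by a parity argument, you absorb the dipole into the general construction instead of treating it as a separate case, and you explicitly carry out the connectivity verification via Theorem~\ref{theo:concyc} and Remark~\ref{rem:index} that the paper leaves to the reader.
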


\begin{proof}
Suppose there are functions $\iota$ and $\zeta$ such that $(\Delta,\lambda,\iota,\zeta)$ is a ccv-graph. Observe that (1) holds by formula (\ref{eq:cubic}). Now suppose $\lambda(x)=\lambda(x^{-1})$ for some $x \in \D(\Delta)$. By item (1) of Theorem \ref{prop:simple}, we have $1 = \gcd(\lambda(x),\lambda(x^{-1})) = \lambda(x)$. Hence, (2) holds. Now suppose $x,y \in \D(\Delta)$ are two parallel darts. By (1), $\lambda(x) + \lambda(y) \leq 3$, so either $\lambda(x) = \lambda(y) = 1$ or $\lambda(x) = 1$ and $\lambda(y) = 2$. In the latter case, from (\ref{eq:ratio}) we have $\lambda(y^{-1}) = 2\lambda(x^{-1})$. Then $\lambda(y^{-1})$ is even and $\lambda(y^{-1}) \leq 3$. This is $\lambda(y^{-1})=2=\lambda(y)$, contradicting item (2). Then (3) holds. Finally, item (4) follows at once from item (1) of Proposition \ref{prop:simple}.

For the converse suppose $\lambda$ satisfies conditions (1)--(4). Suppose $\Delta$ is a dipole with vertices $u$ and $v$. Let $\iota(v)=\iota(u)=3$, and let $\{x_0,x_1,x_2\}$ be the set of darts incident to $u$. By letting $\zeta(x_i) = i$ and $\zeta(x_i^{-1}) = -\zeta(x_i)$ for all $x_i \in \{x_0,x_1,x_2\}$, we obtain a ccv-graph $(\Delta,\lambda,\iota,\zeta)$. 

Now, suppose $\Delta$ is not a dipole. Let $\iota$ be the index function given by (\ref{eq:ext1}) and (\ref{eq:ext2}) in the proof of Lemma \ref{lem:consistent}. Let $c$ be the smallest positive integer such that $c\cdot\iota(v)$ is even if $v$ is incident to a semi-edge or to a pair of parallel links, and $c\cdot\iota(v)\geq 3$ if $v$ is incident to a loop. Define $\overline{\iota}: \V(\Delta) \to \ZZ$ by $\lin{\iota}(v) = c\cdot\iota(v)$ for all $v \in \V(\Delta)$. 

To define an appropriate voltage assignment, let $n=\lcm\{\lin{\iota}(\beg)\lambda(x) \mid x\in\D(\Delta)\}$ and let $\Delta'$ be a maximal simple subgraph of $\Delta$ (note that $\Delta'$ always exists and can be found by removing all darts, all loops and one edge from each pair of parallel edges in $\Delta$). Let $D^*$ be a subset of $\D(\Delta)$ containing exactly one dart for each edge of $\Delta$. Define $\zeta^*: D^* \to \ZZ_n$ as follows: 

$$
\zeta^*(x) = \left\{
        \begin{array}{ll}        	
            0 & \quad \hbox{ if $x \in \D(\Delta')$};\\
            \iota(\beg x)/2 & \quad \hbox{ if $x$ underlies a semi-edge};\\
            1 & \quad \hbox { otherwise}.
        \end{array}
    \right.
$$

We can now extend $\zeta^*$ to a voltage assignment $\zeta$ by letting $\zeta(x) = \zeta^*(x)$ if $x \in D^*$ and $\zeta(x) = \zeta^*(x^{-1})^{-1}$ if $x \notin D^*$. It is straightforward to see that $\zeta$ satisfies (\ref{eq:invvolt}) and that it is $T$-normalised. Therefore, $(\Delta,\lambda,\lin{\iota},\zeta)$ is a cyclic generalised voltage graph. We leave to the reader to verify that $(\Delta,\lambda,\lin{\iota},\zeta)$ is indeed a ccv-graph.
\end{proof}

Let $(\Delta,\lambda,\iota,\zeta)$ be a $\ccv$-graph and let $\{x,x^{-1}\}$ be an edge of $\Delta$. Then we say $xx^{-1}$ is an edge of type $[\lambda(x),\lambda(x^{-1})]$, or simply a $[\lambda(x),\lambda(x^{-1})]$-edge (note that the order in which we write $xx^{-1}$ is important). If $\beg x = u$ and $\beg x^{-1}=v$ then we will also say, when there is no possibility of ambiguity, that $uv$ is a $[\lambda(x),\lambda(x^{-1})]$-edge. The following is a direct consequence of Proposition~\ref{prop:consistentccv}, 

\begin{corollary}
A $\ccv$-graph only has edges of type $[1,1]$, $[1,2]$, $[1,3]$ and $[2,3]$. In particular, parallel edges in a ccv-graph are necessarily $[1,1]$-edges. 
\end{corollary}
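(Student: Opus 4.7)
The plan is to derive the corollary as an immediate consequence of Proposition~\ref{prop:consistentccv}, essentially by enumerating the possibilities for $[\lambda(x),\lambda(x^{-1})]$ allowed by its four conditions, and then invoking one further condition for the parallel edge statement.

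First I would observe that condition (1) of Proposition~\ref{prop:consistentccv} forces $\sum_{y\in\Delta(v)}\lambda(y) = 3$ at every vertex $v$, so in particular every label $\lambda(x)$ is a positive integer at most $3$. This leaves at most nine a priori possible edge types $[\lambda(x),\lambda(x^{-1})]$ with entries in $\{1,2,3\}$. Next I would invoke condition (2) of the same proposition, which asserts that $\lambda(x) = \lambda(x^{-1})$ implies $\lambda(x) = 1$; this eliminates the types $[2,2]$ and $[3,3]$. Up to swapping the distinguished dart of the edge (that is, reading $[a,b]$ as $[b,a]$), the remaining possibilities are exactly $[1,1]$, $[1,2]$, $[1,3]$ and $[2,3]$, which gives the first assertion of the corollary.

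For the second assertion about parallel edges, I would proceed as follows. Suppose $\{x,x^{-1}\}$ and $\{y,y^{-1}\}$ are two distinct parallel edges of $\Delta$. By the definition of parallel edges recalled earlier in the paper, we may choose the distinguished darts so that $\beg x = \beg y$ and $\term x = \term y$, making $x$ and $y$ parallel darts; the same choice makes $x^{-1}$ and $y^{-1}$ parallel darts. Applying condition (3) of Proposition~\ref{prop:consistentccv} to each of these two pairs yields $\lambda(x) = \lambda(y) = 1$ and $\lambda(x^{-1}) = \lambda(y^{-1}) = 1$, so both edges are of type $[1,1]$.

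There is no real obstacle here: the corollary is a straightforward bookkeeping exercise once Proposition~\ref{prop:consistentccv} is in hand, and I expect the writeup to be only a few lines long.
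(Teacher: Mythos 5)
Your argument is correct and matches the paper's intent exactly: the paper states this corollary as a direct consequence of Proposition~\ref{prop:consistentccv} without writing out the details, and your enumeration via conditions (1) and (2) for the first claim and condition (3) (applied to both pairs of parallel darts) for the second is precisely the intended bookkeeping. No gaps.
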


Recall that we will always assume a voltage assignment to be trivial on a prescribed spanning tree of $\Delta$; see Theorem \ref{theo:tnormalised}. In the particular case of ccv-graphs 
we can chose this tree so that it contains all $[i,j]$-edges with $i \neq j$. Even further assumptions, stated in Lemma \ref{lem:simplified} at the end of this subsection, can be made about $\zeta$, so that $\zeta$ is "as nice as possible".

We say a dart-labelled graph $(\Delta',\lambda')$ is a subgraph of $(\Delta,\lambda)$ if $\Delta' < \Delta$ and $\lambda'$ is the restriction of $\lambda$ to $\D(\Delta')$. 

\begin{proposition}
\label{prop:tree}
Let $(\Delta, \lambda, \iota, \zeta)$ be a $\ccv$-graph. Then $(\Delta, \lambda, \iota, \zeta)$ has a spanning tree $\mathcal{T}$ that contains all the $[i,j]$-edges with $i \neq j$.
\end{proposition}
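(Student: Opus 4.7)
My plan is to let $H$ be the spanning subgraph of $\Delta$ containing precisely the $[i,j]$-edges with $i\neq j$, and prove that $H$ is a forest; then any spanning tree of the connected graph $\Delta$ that contains the edges of $H$ (obtained by extending $H$ with further edges one at a time so as to avoid cycles) will satisfy the conclusion. The crux is thus to rule out cycles in $H$, which I will do by contradiction.

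First I would reduce to the case where a hypothetical cycle $C=(x_1,\dots,x_n)$ in $H$ consists entirely of $[1,2]$-edges. Loops and parallel edges are $[1,1]$-edges: a loop has $\beg x=\term x$ so (\ref{eq:ratio}) gives $\lambda(x)=\lambda(x^{-1})$ and then Proposition~\ref{prop:consistentccv}(2) forces $\lambda(x)=1$; for parallel edges this is item (3) of the same proposition. Hence $n\geq 3$. Moreover, if some edge of $C$ had a dart $x$ of label $3$ at a vertex $v$, then condition (\ref{eq:cubic}) would force $\Delta(v)=\{x\}$, so $v$ would have $\Delta$-valence $1$ and could not lie on any cycle; hence every edge of $C$ must be a $[1,2]$-edge.

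Next I would orient each $[1,2]$-edge from its $[1]$-endpoint to its $[2]$-endpoint and set $\epsilon_i=+1$ if the dart $x_i$ traverses its edge in this canonical direction, $\epsilon_i=-1$ otherwise. Writing $v_i:=\beg x_i$, the key observation is that the cyclic sequence $(\epsilon_1,\dots,\epsilon_n)$ admits no transition $(\epsilon_{i-1},\epsilon_i)=(+1,-1)$: in that case $v_i$ would be the $[2]$-endpoint of both of its incident $C$-edges, so two darts of label $2$ would emanate from $v_i$, forcing $\sum_{x\in\Delta(v_i)}\lambda(x)\geq 4$ and contradicting (\ref{eq:cubic}). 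Propagating this constraint cyclically (if some $\epsilon_i=-1$, one back-chains $\epsilon_{i-1}=\epsilon_{i-2}=\cdots=-1$) forces all $\epsilon_i$ to be equal.

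Finally, a direct calculation from the definition of $\epsilon_i$ gives $\lambda(x_i)/\lambda(x_i^{-1})=2^{-\epsilon_i}$, so $\rho_\lambda(C)=2^{-\sum_i\epsilon_i}$; since $(\Delta,\lambda)$ is extendable, Lemma~\ref{lem:consistent} ensures $\rho_\lambda(C)=1$, and hence $\sum_i\epsilon_i=0$. Combined with all $\epsilon_i$ being equal and $n\geq 3$, this yields $\sum_i\epsilon_i=\pm n\neq 0$, the desired contradiction. The main obstacle I anticipate is bookkeeping the orientation conventions so that the forbidden-transition condition is stated correctly; once this local cubicness constraint is in place, the proof comes out cleanly as a marriage between it and the global extendability condition $\rho_\lambda(C)=1$.
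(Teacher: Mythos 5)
Your proof is correct, but it takes a different route from the one in the paper. Both arguments reduce the proposition to showing that the subgraph spanned by the $[i,j]$-edges with $i\neq j$ is acyclic, and both derive the contradiction from the cubic condition $\sum_{x\in\Delta(v)}\lambda(x)=3$; the difference lies in how the offending vertex or configuration is located. The paper's proof is an extremal argument: it picks a vertex $u$ of the hypothetical cycle with $\iota(u)$ minimal and observes via (\ref{eq:ratio}) that both cycle-darts emanating from $u$ must then carry label at least $2$, immediately violating (\ref{eq:cubic}); this handles $[1,2]$-, $[1,3]$- and $[2,3]$-edges uniformly in three lines and never needs the extendability identity $\rho_\lambda(C)=1$ explicitly. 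You instead first reduce to a cycle of $[1,2]$-edges (correctly ruling out loops, parallel edges and label-$3$ darts), then run an orientation/sign argument: the cubic condition forbids the transition $(+1,-1)$, forcing all $\epsilon_i$ equal, while $\rho_\lambda(C)=2^{-\sum_i\epsilon_i}=1$ forces $\sum_i\epsilon_i=0$. The two are close kin --- your forbidden transition at $v_i$ is exactly the paper's configuration at the minimal-index vertex, and $\rho_\lambda(C)=1$ is just the statement that $\iota$ is well defined along the cycle --- but your version trades the single extremal choice for a global parity-of-orientations count, at the cost of the extra reduction step and an explicit appeal to Lemma~\ref{lem:consistent}. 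What your approach buys is that it makes visible the structural fact that $[1,2]$-edges are coherently oriented along any cycle they would form, which is a reusable observation; what the paper's buys is brevity and uniformity over all edge types.
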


\begin{proof}
Let $(\Delta',\lambda')$ be the subgraph of $(\Delta,\lambda)$ induced by all $[i,j]$-edges of $(\Delta, \lambda, \iota, \zeta)$ with $i \neq j$. We will show that $\Delta'$ is acyclic. Suppose to the contrary that $\Delta'$ contains a cycle $C$. Let $u$ be the the vertex of $C$ with the least index, that is $\iota(u) \leq \iota(v)$ for all $v \in \V(C)$. Let $v$ and $w$ be the two neighbours of $u$ in $C$. Now, $uv$ cannot be a $[1,1]$-edge as $\Delta'$ contains no such edges. If $uv$ is a $[1,j]$-edge with $j \in \{2,3\}$, then $\iota(v)=\frac{1}{j}\cdot\iota(u)<\iota(u)$, which contradicts $u$ having minimal index. Hence, $vu$ must be a $[1,j]$-edge with $j \in \{2,3\}$. Similarly, $wu$ is a $[1,j]$-edge with $j \in \{2,3\}$. This implies that the sum of the labels of darts in $\Delta(u)$ is at least $4$, contradicting formula (\ref{eq:cubic}). Hence, $\Delta'$ is acyclic and thus can always be completed to a spanning tree $\mathcal{T}$ of $\Delta$.
\end{proof}

\begin{lemma}
\label{lem:simplified}
Let $(\Delta,\lambda,\iota,\zeta)$ be a ccv-graph. Then there exists a voltage assignment $\zeta'$ such that $\Cyc(\Delta, \lambda,\iota,\zeta) \cong \Cyc(\Delta, \lambda,\iota,\zeta')$ and for all $x \in \D(\Delta)$, the following holds:
\begin{enumerate}
\item $\zeta'(x) < \gcd(\iota(\beg x),\iota(\term x))$ for all $x \in \D(\Delta)$;
\item $\zeta'$ is $T$-normalised for a spanning tree of $\Delta$ containing all $[i,j]$-edges with $i \neq j$;
\item $\zeta'(x) = \iota(\beg x)/2$ whenever $x$ underlies a semi-edge;
\item $0 < \zeta'(x) < \iota(\beg x)$ and $\zeta'(x) \neq \iota(\beg x)/2$ whenever $x$ underlies a loop.
\end{enumerate}  
\end{lemma}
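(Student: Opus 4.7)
The plan is a two-step modification of $\zeta$. First, by Proposition~\ref{prop:tree}, fix a spanning tree $T$ of $\Delta$ containing every $[i,j]$-edge with $i\neq j$, and apply Theorem~\ref{theo:tnormalised} to replace $\zeta$ by a $T$-normalised voltage assignment $\zeta_T$ with $\Cyc(\Delta,\lambda,\iota,\zeta)\cong \Cyc(\Delta,\lambda,\iota,\zeta_T)$. Second, for each dart $x$ let $\zeta'(x)$ be the unique representative in $[0,d_x)$ of $\zeta_T(x)$ reduced modulo $d_x:=\gcd(\iota(\beg x),\iota(\term x))$. By Remark~\ref{rem:reduced} this yields the same cover, namely $\Cyc(\Delta,\lambda,\iota,\zeta_T)=\Cyc(\Delta,\lambda,\iota,\zeta')$. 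Condition (1) is then immediate, while (2) follows because $\zeta_T$ already vanishes on $D(T)$ and the reduction of $0$ is $0$.

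Before addressing (3) and (4), one must check that $\zeta'$ is a bona fide voltage assignment, i.e., that it still satisfies (\ref{eq:invvolt}). For an $[i,j]$-edge with $i\neq j$, both darts lie in $D(T)$ by the choice of $T$, so $\zeta_T$ and hence $\zeta'$ vanish on them and there is nothing to check. For loops, semi-edges and $[1,1]$-edges, Proposition~\ref{prop:consistentccv} forces $\lambda(x)=\lambda(x^{-1})=1$, and then (\ref{eq:ratio}) yields $\iota(\beg x)=\iota(\term x)$, so $d_x=\iota(\beg x)=\lambda(x)\iota(\beg x)$. Hence reduction modulo $d_x$ coincides with reduction modulo $\lambda(x)\iota(\beg x)$, and (\ref{eq:invvolt}) is preserved automatically.

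For (3), a dart $x$ underlying a semi-edge satisfies $x=x^{-1}$ and $\lambda(x)=1$ by Proposition~\ref{prop:consistentccv}(4); substituting into (\ref{eq:invvolt}) gives $2\zeta'(x)\equiv 0\pmod{\iota(\beg x)}$, while Theorem~\ref{prop:simple}(3) forbids $\zeta'(x)\equiv 0\pmod{\iota(\beg x)}$. Since $0\leq \zeta'(x)<d_x=\iota(\beg x)$, the only remaining possibility is $\zeta'(x)=\iota(\beg x)/2$ (which in particular forces $\iota(\beg x)$ to be even). For (4), if $x$ underlies a loop then again $\lambda(x)=1$, and the darts $x$ and $x^{-1}$ are parallel; applying Theorem~\ref{prop:simple}(2) together with $\zeta'(x^{-1})\equiv -\zeta'(x)\pmod{\iota(\beg x)}$ yields $2\zeta'(x)\not\equiv 0\pmod{\iota(\beg x)}$. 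Combined with the bound $0\leq\zeta'(x)<\iota(\beg x)$ provided by (1), this gives $0<\zeta'(x)<\iota(\beg x)$ and $\zeta'(x)\neq\iota(\beg x)/2$, as required.

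The only delicate point in this scheme is the compatibility of the mod-$d_x$ reduction with (\ref{eq:invvolt}), which is a congruence modulo $\lambda(x)\iota(\beg x)$ -- a quantity that can strictly exceed $d_x$ precisely when $\lambda(x)\neq\lambda(x^{-1})$. This is exactly why the spanning tree supplied by Proposition~\ref{prop:tree} is tailored to absorb all $[i,j]$-edges with $i\neq j$: their voltages are trivialised by $T$-normalisation, leaving only the innocuous cotree edges (loops, semi-edges and $[1,1]$-edges) to be reduced, where the reduction is harmless.
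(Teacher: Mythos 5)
Your proposal is correct and follows essentially the same route as the paper: fix the spanning tree of Proposition~\ref{prop:tree}, $T$-normalise via Theorem~\ref{theo:tnormalised}, reduce modulo $\gcd(\iota(\beg x),\iota(\term x))$ using Remark~\ref{rem:reduced}, and then derive (3) and (4) from condition (\ref{eq:invvolt}) together with Theorem~\ref{prop:simple}. Your extra check that the reduced assignment still satisfies (\ref{eq:invvolt}) on cotree darts (where $\lambda(x)=\lambda(x^{-1})=1$ forces $d_x=\lambda(x)\iota(\beg x)$) is a welcome point of care that the paper leaves implicit.
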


\begin{proof}
By Proposition \ref{prop:tree}, $(\Delta,\lambda,\iota,\zeta)$ admits a spanning tree $T$ that contains all $[i,j]$-edges with $i\neq j$. By Theorem \ref{theo:tnormalised} we can assume without loss of generality that $\zeta$ is $T$-normalised. Let $\zeta'$ be the voltage obtained by reducing $\zeta(x)$ modulo $\gcd(\iota(\beg x),\iota(\term x))$ for all $x \in \D(\Delta)$. By Remark~\ref{rem:reduced}, $\Cyc(\Delta, \lambda,\iota,\zeta) = \Cyc(\Delta, \lambda,\iota,\zeta')$. Items (1) and (2) follow at once from our choice of $\zeta'$. 

Suppose a dart $x \in \D(\Delta)$ underlies a semi-edge. Then $\zeta'(x) \neq 0$ by Theorem~\ref{prop:simple}. Moreover $\zeta'(x)=\zeta'(x^{-1})$ since $x = x^{-1}$. Then by equality (\ref{eq:invvolt}) we have $\zeta'(x) \equiv -\zeta'(x)$ ($\mod \iota(\beg x)$), or equivalently, $2\cdot\zeta'(x) \equiv 0$ ($\mod \iota(\beg x)$). Since $0 < \zeta'(x) < \iota(\beg x)$, we see that $\iota(\beg x)$ is even and $\zeta'(x) = \iota(\beg x)/2$. Hence (3) holds.

Finally, if $x$ underlies a loop, by Theorem~\ref{prop:simple}, $\zeta'(x) \not \equiv \zeta'(x^{-1})$ ($\mod \iota(\beg x)$) and therefore $0 \neq \zeta'(x) \neq \iota(\beg x)/2$. Hence (4) holds. This completes the proof.
\end{proof}

We call a voltage assignment like in Lemma \ref{lem:simplified} a {\em simplified voltage assignment}. Since every simple, connected, cubic graph is the cyclic generalised cover of a ccv-graph with a simplified voltage assignment, we will henceforth always assume that the voltage of a ccv-graph is simplified.

\subsection{Proof of Theorem \ref{the:main}}
\label{ssec:proof}
We would like to begin by making a few observations about the graphs $\Delta_i$ appearing in Figure \ref{fig:quotients}. These $25$ graphs comprise the complete list of graphs on at most $3$ vertices with a dart-labelling that agrees with Proposition \ref{prop:consistentccv}. Although it is a simple enough computation to be done by hand, in order to avoid human error, we used a computer programme written in SAGE \cite{sage} to construct, by brute force, all such graphs (up to label-preserving isomorphism). Then, any ccv-graph on at most $3$ vertices can be obtained by extending some $\Delta_i$ to a cyclic generalised voltage graph. 
In light of Lemma \ref{lem:simplified}, when extending a labelled graph $\Delta_i$ to a ccv-graph, we will only consider those voltage assignments that agree with the corresponding voltage shown in Figure \ref{fig:quotients} (that is, a voltage assignment that is trivial on every edge lacking an arrowhead, and that assigns voltage $\iota(\beg x)/2$ to every semi-edge $x$). The conditions under each $\Delta_i$ are derived from equalities (\ref{eq:ratio}) and (\ref{eq:invvolt}), and from Theorems \ref{theo:concyc} and \ref{prop:simple}. Hence, an extension $(\Delta,\lambda,\iota,\zeta)$ of $\Delta_i$, for some $i \in \{1,\ldots,25\}$, is a ccv-graph if and only if $\iota$ and $\zeta$ satisfy the corresponding conditions listed in Theorem \ref{the:main} (and agree with the corresponding voltage assignment shown in Figure \ref{fig:quotients}). 

Let $\Gamma$ be a connected, cubic, simple graph and suppose it admits a cyclic group of automorphism $G$ having at most $3$ vertex-orbits. By Theorem \ref{the:lift}, $\Gamma$ is the generalised cover of a generalised voltage graph $(\Gamma/G,G,\omega,\zeta)$, where the quotient $\Gamma/G$ has at most $3$ vertices. Since $G$ is a cyclic group, by Theorem \ref{theo:corresp}, $\Gamma \cong \Cyc(\Gamma/G,\lambda,\iota,\zeta)$ where $(\Gamma/G,\lambda,\iota,\zeta) = \Phi(\Gamma/G,G,\omega,\zeta)$. Moreover, since $G$ is connected, cubic and simple $(\Gamma/G,\lambda,\iota,\zeta)$ is a ccv-graph. Then $(\Gamma/G,\lambda,\iota,\zeta)$ is an extension of $\Delta_i$, for some $i \in \{1,\dots,25\}$. That is $\Gamma$ is isomorphic to $\Gamma_i(m;r,s)$ (or $\Gamma(m;r)$ or $\Gamma(m)$) for some $m$, $r$ and $s$ satisfying the corresponding conditions listed in Theorem \ref{the:main}.

For the converse, let $(\Delta,\lambda,\iota,\zeta)$ be a ccv-graph obtained by extending one dart-labelled graph $\Delta_i$ from Figure \ref{fig:quotients}. Recall that the generalised cover of a ccv-graph is a simple, cubic, connected graph. Moreover, the group $\ZZ_n$, where $n = \lcm\{\lambda(x)\iota(\beg x) \mid x \in \D(\Delta_i)\}$, acts as a group of automorphism of $\Cyc(\Delta,\lambda,\iota,\zeta)$, and the orbits on vertices and darts under this action are precisely the fibres of $\Cyc(\Delta,\lambda,\iota,\zeta)$ (see Lemma \ref{lem:fiberperm}). This completes the proof of Theorem~\ref{the:main}.

\section{Cubic vertex-transitive graphs with a cyclic group having at most $3$ orbits}
\label{sec:vt}

We devote this section to the proof of Theorem \ref{the:vt}
stated in Section~\ref{sec:intro}.
That is, 
 we will show that a connected cubic vertex-transitive graph admitting a cyclic group of automorphisms  having at most $3$ vertex-orbits 
is either isomorphic to the Tutte-Coxeter graph $\Gamma_{25}(10,1,3)$ 
 or
it belongs to one the following $5$ infinite families:
 \begin{enumerate}
\item $\Gamma_1(m;r)$ with $m \equiv 0$ $(\mod 2)$, $m \geq 4$, $\gcd(\frac{m}{2},r)=1$, $r \in \{1,2\}$;
\item $\Gamma_2(m;r,1)$ with $m \geq 3$, $r^2 \equiv \pm 1$ $(\mod m)$, or $m = 10$ and $r=2$;
\item $\Gamma_4(m;r,s)$ with $m \geq 3$, $r \neq s$, $\gcd(m,r,s)=1$;
\item $\Gamma_{22}(m;2,1)$ with $m \geq 4$ and $\frac{m}{2}$ odd;
\item $\Gamma_{23}(m;r,1)$ with $\frac{m}{2} \equiv 1$ $(\mod 4)$ and $r = (\frac{m}{2}+3)/2$,  or $\frac{m}{2} \equiv 3$ $(\mod 4)$ and $r = (\frac{3m}{2}+3)/2$, or $m=4$ and $r=0$. 
\end{enumerate} 

Let $E$ be the set containing the following $9$ exceptional graphs: $K_4$, $K_{3,3}$, the cube $Q_3$, the Petersen graph, the Heawood graph, the generalised Petersen graph $\textrm{GP}(8,3)$, the Pappus graph, the dodecahedron $\textrm{GP}(10,2)$ and the generalised Petersen graph $\textrm{GP}(10,3)$ (see \cite{girth7} for definitions and properties of the Pappus graph, the Heawood graph and the generalised Petersen graphs).

We will partition the set $\{1,\ldots,25\}$ into three sets: $I_M = \{1,2,3,4,22,23,24,25\}$, $I_E = \{5,6,8,16,18,19,21\}$ and $I_C = \{7,9,10,11,12,13,14,15,17,20\}$. The reason for this is that a ccv-cover of $\Delta_i$ with $i \in I_M$ is necessarily a $k$-multicirculant for some $k \in \{1,2,3\}$. We can then resort to the classification of cubic vertex-transitive bicirculants ($2$-multicirculants) \cite{bic} and tricirculants ($3$-multicirculant) \cite{MPtricirc} to deal with ccv-covers of $\Delta_i$ when $i \in I_M$. For the remaining indices in $\{1,\ldots,25\} \setminus I_M$, we distinguish those for which $\Delta_i$ admits no vertex-transitive ccv-cover (these conform the set $I_C$) and those who admit at least one vertex-transitive ccv-cover (these conform the set $I_E$). As it transpires, all vertex-transitive ccv-covers of $\Delta_i$ with $i \in I_E$ belong to the set $E$ of exceptional graphs. Theorem~\ref{the:vt} will then follow from Claims \ref{cl:ex} and \ref{cl:noVT} (which will be proved at the end of this section), as well as the classification of cubic vertex-transitive bicirculants and tricirculants.

\begin{claim}
\label{cl:ex}
If $i \in I_E$, then a vertex-transitive ccv-cover of $\Delta_i$ must be one of the $9$ exceptional graphs in $E$. 
\end{claim}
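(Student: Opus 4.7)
The plan is a case analysis over the seven indices $i\in I_E=\{5,6,8,16,18,19,21\}$. The defining feature shared by these quotients is that each $\Delta_i$ contains at least one edge of type $[j,k]$ with $j\neq k$, which by equation (\ref{eq:ratio}) forces the index function $\iota$ to take at least two distinct values on $\V(\Delta_i)$. Consequently, for any parameters, the cover $\Gamma_i(m;r,s)$ decomposes into fibres of unequal sizes under the canonical cyclic action of Lemma \ref{lem:fiberperm}, so that group alone does not witness vertex-transitivity: additional automorphisms must exist if $\Gamma_i(m;r,s)$ is to be vertex-transitive, and these must send a vertex in a small fibre to one in a large fibre.

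For each $i\in I_E$ I would then pick a vertex $v\in \V(\Delta_i)$ with $\iota(v)$ minimal and a vertex $u\in\V(\Delta_i)$ with $\iota(u)$ strictly larger, and compare a local $\Aut(\Gamma_i(m;r,s))$-invariant of $v_0\in\fib(v)$ with the same invariant of $u_0\in\fib(u)$. The natural choice is the number $c_k(x)$ of closed walks (or cycles) of length $k$ through a vertex $x$, for $k$ equal to the girth of $\Gamma_i(m;r,s)$. Using Lemma \ref{lem:adjacency} together with the explicit voltage assignment recorded under $\Delta_i$ in Figure \ref{fig:quotients}, the values $c_k(v_0)$ and $c_k(u_0)$ can be written as explicit functions of the parameters $(m,r,s)$. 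For all but finitely many parameter values these two counts differ, because short cycles through $v_0$ must close using voltages forced into restrictive congruence classes modulo $\gcd(\iota(\beg x),\iota(\term x))$, whereas short cycles through $u_0$ are constrained by the larger modulus $\iota(u)$. Vertex-transitivity therefore forces a bounded list of parameter triples per index $i$, and for each remaining triple one checks directly, using Lemma \ref{lem:adjacency}, that $\Gamma_i(m;r,s)$ is isomorphic to one of the nine graphs of $E$. The examples $\Gamma_6(1;1)\cong K_4$ and $\Gamma_6(2;1)\cong Q_3$ already exhibited in the paper illustrate the expected pattern.

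The hard part will be carrying out the cycle-count comparison uniformly across the seven quotients, since each $\Delta_i$ has its own combinatorial silhouette (loops, semi-edges, pairs of parallel edges, triangles, etc.), giving a different formula for $c_k$ and a different girth. In cases where a single cycle count is insufficient, I would supplement it with one of two backup tools: either a second invariant such as the multiset of cycle lengths meeting a given $2$-arc at $v_0$, or the Tutte bound $|\Aut(\Gamma)_v|\le 48$ for cubic arc-transitive graphs together with the observation that $|\V(\Gamma_i(m;r,s))|=\sum_{v\in \V(\Delta_i)}\iota(v)$ grows linearly in $m$ while any vertex-transitive automorphism group has order $|\V(\Gamma)|\cdot|\Aut(\Gamma)_v|$. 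Combining these, one eliminates all large-$m$ candidates and verifies by inspection that the surviving small-$m$ covers exhaust the list $E$.
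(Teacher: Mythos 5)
Your proposal is a plan rather than a proof, and its two load-bearing steps are not secured. First, the central claim --- that for all but finitely many parameter values the girth-cycle count through a vertex in a small fibre differs from that through a vertex in a large fibre --- is asserted, not established, and it is not in fact how the finiteness arises. The paper's route is different: for each $i\in I_E$ it exhibits a configuration in $\Delta_i$ (a loop attached to an endpoint of a $[1,2]$- or $[1,3]$-edge, or a semi-edge together with such an edge) that \emph{forces} short cycles in every cover --- Lemma~\ref{lem:cycles} produces unions of $3$-cycles, $4$-cycles or copies of $K_{3,2}$, and Lemma~\ref{lem:loop} produces $6$-cycles through every vertex of certain fibres. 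This pins the girth at most $6$ independently of $m$, $r$, $s$, and the finiteness then comes entirely from quoting classification theorems: girth-regular cubic graphs of girth at most $5$ with all-positive signature (Lemma~\ref{lem:girthregular}), cubic arc-transitive graphs of girth less than $6$ (Lemma~\ref{lem:smallgirth1}), and cubic arc-transitive graphs of girth $6$ having an edge on at least three $6$-cycles (Lemma~\ref{lem:smallgirth2}), with arc-transitivity supplied by Remark~\ref{rem:label3} whenever $\Delta_i$ has a $[1,3]$- or $[2,3]$-edge. For $i=8$, for example, \emph{every} vertex lies on a $3$-cycle for \emph{every} choice of parameters, so the conclusion $\Gamma\cong K_4$ holds uniformly; there is no ``all but finitely many'' dichotomy to exploit.

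Second, your fallback tool fails. Tutte's bound $|\Aut(\Gamma)_v|\le 48$ together with $|\Aut(\Gamma)|=|\V(\Gamma)|\cdot|\Aut(\Gamma)_v|$ produces no contradiction for large $m$: a vertex-transitive automorphism group of order growing linearly in $|\V(\Gamma)|$ is exactly what one expects, so this cannot ``eliminate all large-$m$ candidates.'' If you want to salvage your approach, the missing idea is to show that the local configurations in each $\Delta_i$ force cycles of bounded length through some vertex regardless of the parameters, and then to lean on the small-girth classifications; without that, the case analysis you defer to ``the hard part'' is the entire proof.
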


\begin{claim}
\label{cl:noVT}
If $i \in I_C$ then $\Delta_i$ admits no vertex-transitive ccv-cover.
\end{claim}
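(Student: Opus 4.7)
The strategy is a finite case analysis over $i \in I_C = \{7,9,10,11,12,13,14,15,17,20\}$. For each such $i$ and every admissible choice of parameters $(m,r,s)$, I would construct a vertex invariant of $\Gamma := \Gamma_i(m;r,s)$ that is preserved by every automorphism but is not constant on $\V(\Gamma)$. Because the cyclic group $\ZZ_n$ from Lemma \ref{lem:fiberperm} acts transitively on each fibre, such an invariant is automatically constant on every fibre; it therefore suffices to exhibit a value distinguishing at least two fibres to conclude that $\Gamma$ is not vertex-transitive.

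The natural candidates are counts of closed (non-backtracking) walks of small length through a vertex, or equivalently counts of cycles of length $\ell$ through a vertex, for small $\ell$. By Lemma \ref{lem:adjacency}, a closed walk in $\Gamma$ through $u_i \in \fib(u)$ projects to a closed walk in $\Delta_i$ rooted at $u$, and conversely a closed walk in $\Delta_i$ rooted at $u$ lifts to a closed walk in $\Gamma$ at $u_i$ precisely when its accumulated voltage vanishes modulo the appropriate $\iota$-values. The resulting count is independent of $i$ within a fixed fibre and can be read off directly from the local structure of $(\Delta_i,\lambda,\iota,\zeta)$ around $u$, so it is a true fibre invariant.

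For most of the ten $\Delta_i$ in $I_C$ the discrepancy between fibres is already visible at length $\ell \leq 6$: the presence of a loop, a semi-edge, a pair of parallel edges, or an incident $[1,2]$- or $[2,3]$-edge at a vertex $v \in \V(\Delta_i)$ forces short closed walks through every vertex in $\fib(v)$, and one verifies that no analogous configuration at the other vertices produces matching counts. Concretely, for each $i$ I would write $\Delta_i$ down explicitly, enumerate its short rooted closed walks at each vertex, extract the ones of zero accumulated voltage via the voltage rule of Definition \ref{def:cyc}, and confirm that the resulting tally separates two of the fibres for every admissible $(m,r,s)$. Because $\lambda$ and $\iota$ take only a handful of values on each $\Delta_i$, this is a finite computation in every instance.

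The main obstacle is the handful of cases in which the simplest length-$3$ and length-$4$ invariants happen to coincide on all fibres; for these one must pass to length-$5$ or length-$6$ counts, or combine several invariants (for example the multiset of valences encountered in the second neighbourhood). Here the admissibility conditions listed in Theorem \ref{the:main} restrict $(m,r,s)$ severely enough that the few residual configurations can be dispatched by direct enumeration, using Lemma \ref{lem:adjacency} and, where convenient, Lemma \ref{lem:action} to normalise the voltage before counting. In this way every $i \in I_C$ yields a concrete fibre invariant that obstructs vertex-transitivity, completing the claim.
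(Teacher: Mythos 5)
Your overall idea -- count short cycles through a vertex, note that any such count is constant on fibres because of the cyclic fibre-transitive group of Lemma \ref{lem:fiberperm}, and then try to separate two fibres -- is in the same spirit as the paper's use of $c$-signatures, but your proposal has a genuine gap at the step where you claim everything reduces to ``a finite computation in every instance.'' For a fixed $i\in I_C$ the admissible parameters $(m,r,s)$ form an \emph{infinite} family (e.g.\ for $i=13$ one has all even $m$ with $\gcd(m/2,r)=1$), and whether a given rooted closed walk of $\Delta_i$ lifts to a closed walk upstairs depends on voltage congruences in $m$, $r$, $s$. So the cycle counts are functions of the parameters, and showing they separate two fibres for \emph{every} admissible choice is not a finite enumeration; worse, for some $i$ (e.g.\ $i=13$) the small-cycle counts need not differ between fibres at all, so no such invariant is available and the obstruction to vertex-transitivity must come from elsewhere.

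The paper closes exactly this gap with two ingredients you do not mention. First, vertex-transitivity is upgraded to arc-transitivity: every $\Delta_i$ with $i\in I_C$ either has a $[1,3]$- or $[2,3]$-edge (Remark \ref{rem:label3}) or a loop attached to a $[1,2]$-edge (Lemma \ref{lem:loop}), and in either case a vertex-transitive cover is forced to be arc-transitive. Second, one invokes the classification results for arc-transitive or girth-regular cubic graphs of girth at most $6$ (Lemmas \ref{lem:girthregular}, \ref{lem:smallgirth1}, \ref{lem:smallgirth2}): the forced short cycles coming from loops, semi-edges and parallel edges (Lemma \ref{lem:cycles}) either pin the girth below $6$, or force some edge to lie on at least three $6$-cycles, so $\Gamma$ must be one of finitely many exceptional graphs; a precomputed table then shows none of these is a ccv-cover of $\Delta_i$ for $i\in I_C$, giving the contradiction. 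Only after this reduction do the remaining cases $i\in\{12,20\}$ collapse to congruences such as $3r\equiv 0\ (\mathrm{mod}\ 2m)$ with finitely many solutions that can be checked directly. Without the arc-transitivity step and the appeal to these classification theorems, your plan cannot be completed as described.
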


We will now introduce the concept of the {\em signature} of a cubic graph, which we will need to prove the claims above. Let $\Gamma$ be a simple cubic graph. For an edge $e$ of $\Gamma$ and a positive integer $c$ denote by $\epsilon_c(e)$ the number of $c$-cycles (cycles of length $c$) that pass through $e$. For a vertex $v$ of $\Gamma$, let $\{e_1,e_2,e_3\}$ be the set of edges incident to $v$ ordered in such a way that $\epsilon_c(e_1) \leq \epsilon_c(e_2) \leq \epsilon_c(e_3)$. The triplet $(\epsilon_c(e_1),\epsilon_c(e_2),\epsilon_c(e_3))$ is then called the {\em $c$-signature} of $v$. If for a positive integer $c$ all vertices of $\Gamma$ have the same $c$-signature, we say that $\Gamma$ is {\em $c$-cycle-regular}. In particular, if $\Gamma$ is $g$-cycle-regular, where $g$ is the girth of $\Gamma$, we will say that $\Gamma$ is {\em girth-regular}, following the nomenclature of \cite{signature}. Observe that if $\Gamma$ is vertex-transitive, then $\Gamma$ is $c$-cycle-regular for all $c \in \NN$. 

We will need the five following lemmas. Lemma \ref{lem:girthregular} is proved in \cite{signature}. Lemma \ref{lem:smallgirth1} is almost folklore, it is mentioned in \cite{smallgirth} and \cite{girth7} but a direct proof is not provided. It follows as a corollary of Theorem 5 of \cite{signature}. Meanwhile, Lemma \ref{lem:smallgirth2} is proved in \cite{girth7}. We provide full proof of Lemmas \ref{lem:loop} and \ref{lem:cycles}.

\begin{lemma}
\cite[Theorem 5]{signature}
\label{lem:girthregular}
Let $\Gamma$ be a cubic girth-regular graph of girth $g \leq 5$. Then either the $g$-signature of $\Gamma$ is $(0,1,1)$ or one of the following occurs:
\begin{enumerate}
\item $g=3$ and $\Gamma \cong K_4$;
\item $g=4$ and $\Gamma$ is isomorphic to a prism or a M\"{o}bius ladder; 
\item $g=5$ and $\Gamma$ is isomorphic to the Petersen graph or the Dodecahedron.
\end{enumerate}
\end{lemma}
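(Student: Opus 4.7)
The plan is to proceed by case analysis on the girth $g \in \{3,4,5\}$. The key arithmetic observation underlying every case is that each $g$-cycle through a vertex $v$ uses exactly two of the three edges incident to $v$, so the sum $a_1+a_2+a_3$ of the entries of the common $g$-signature equals $2 c_g(v)$, where $c_g(v)$ is the total number of $g$-cycles through $v$. This places strong constraints on what signatures can occur and lets me compare $c_g(v)$ against the number of walks of length $\lfloor g/2 \rfloor$ emanating from $v$.

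For $g=3$, I would note that a triangle through $v$ corresponds to an edge between two of the three neighbors of $v$, so $c_3(v) \leq \binom{3}{2}=3$. The signature $(0,1,1)$ corresponds to a single triangle at $v$. If $a_1 \geq 1$, then every pair of neighbors of $v$ is adjacent, so the closed neighborhood of $v$ spans $K_4$, and $3$-regularity forces $\Gamma \cong K_4$. If $a_1=0$ but $(a_2,a_3)\neq(1,1)$, then the two "triangle-bearing" edges at $v$ each lie in more than one triangle; I would trace the forced adjacencies in the ball of radius $2$ around $v$ and show that the only way girth $3$ and cubicness can coexist with girth-regularity is again $K_4$.

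For $g=4$, a $4$-cycle through $v$ corresponds to a pair of distinct neighbors $u_1,u_2$ of $v$ with a common neighbor different from $v$. I would enumerate the admissible $4$-signatures and for each non-$(0,1,1)$ candidate argue that the local structure around $v$ propagates, via girth-regularity, to a global "ladder" of $4$-cycles sharing a common edge. The main tool is that every extra $4$-cycle through $v$ forces two length-$2$ paths from $v$ whose endpoints coincide, and consistency of these coincidences across the graph yields a decomposition into an $n$-cycle plus a perfect matching in which the matching is either straight or half-twisted; these are exactly the prism $C_n \,\square\, K_2$ and the M\"obius ladder $M_n$.

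For $g=5$, I would argue that if the signature differs from $(0,1,1)$, then the ball of radius $2$ around any vertex exhibits forced coincidences, and a counting argument comparing the number of length-$2$ walks from $v$ to the number of $5$-cycles through $v$ forces the intersection numbers at every vertex to be constant. Hence $\Gamma$ is distance-regular with a specific intersection array, and I would then invoke the classification of distance-regular cubic graphs of girth $5$ to conclude that $\Gamma$ is the Petersen graph or the dodecahedron. This is where I expect the main obstacle: ruling out intermediate signatures at $g=5$ by hand is delicate, as it requires either a slick counting/eigenvalue computation in the spirit of the Moore bound or an appeal to a substantial body of distance-regular graph theory, and pinning the final list down to exactly two graphs without heavy machinery is the genuinely hard step.
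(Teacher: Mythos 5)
The paper does not actually prove this lemma: it is quoted verbatim from \cite[Theorem~5]{signature} (Poto\v{c}nik--Vidali, \emph{Girth-regular graphs}), and the text explicitly says ``Lemma~\ref{lem:girthregular} is proved in \cite{signature}.'' So there is no in-paper proof to compare against; what you have written is a plan for reproving an external result. Your overall strategy (double counting $a_1+a_2+a_3=2c_g(v)$, then local case analysis on the signature propagated by girth-regularity) is indeed the right kind of argument and is in the spirit of what is done in \cite{signature}, but as written it has at least two genuine gaps.

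First, in the $g=3$ case the claim ``if $a_1\geq 1$ then every pair of neighbours of $v$ is adjacent'' is false. Take neighbours $a,b,c$ of $v$ with edges $ab$ and $bc$ present but $ac$ absent: then every edge at $v$ lies on a triangle (the signature at $v$ is $(1,1,2)$), yet $a\not\sim c$. This configuration must instead be killed by propagation: the vertex $a$ would also need signature $(1,1,2)$, hence a second triangle through $a$, which would require its third neighbour $d$ to be adjacent to $v$ or to $b$, impossible since both already have valence $3$; only then does $a_1\geq 1$ force $K_4$. Second, and more seriously, the $g=5$ case is not a proof at all: you propose to deduce that $\Gamma$ is distance-regular and then invoke the classification of cubic distance-regular graphs, but girth-regularity is a far weaker, purely local condition, and no argument is given for why a non-$(0,1,1)$ $5$-signature forces constant intersection numbers at distance $2$ and beyond. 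The intermediate signatures such as $(0,2,2)$ and $(1,1,2)$ have to be excluded by explicit tracing of forced $5$-cycles (showing they create cycles of length $<5$ or violate cubicity), and the remaining cases $(2,2,2)$ and $(4,4,4)$ identified with the dodecahedron and the Petersen graph directly; you correctly flag this as the hard step, but flagging it does not discharge it. As it stands the proposal is an outline with a false intermediate claim and an unjustified appeal to heavy machinery, not a complete proof.
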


\begin{lemma}
\label{lem:smallgirth1}
If $\Gamma$ is a cubic arc-transitive graph of girth smaller than $6$, then $\Gamma$ is isomorphic to one of the following: $K_4$, $K_{3,3}$, the three-dimensional cube $Q_3$, the Petersen Graph or the Dodecahedron.
\end{lemma}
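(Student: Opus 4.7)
The plan is to combine Lemma \ref{lem:girthregular} with the observation that arc-transitivity implies edge-transitivity, and then identify by direct inspection which of the remaining candidates are in fact arc-transitive. Since $\Gamma$ is arc-transitive it is vertex-transitive, hence girth-regular, so Lemma \ref{lem:girthregular} applies with $g \leq 5$: either the $g$-signature of $\Gamma$ equals $(0,1,1)$, or $\Gamma$ is isomorphic to $K_4$ (when $g=3$), to a prism or a Möbius ladder (when $g=4$), or to the Petersen graph or the Dodecahedron (when $g=5$).

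The $(0,1,1)$ alternative is ruled out by edge-transitivity. Arc-transitivity forces $\Aut(\Gamma)$ to act transitively on the edge set of $\Gamma$, so the number of $g$-cycles through an edge is a graph invariant that must be constant across all edges; but a $(0,1,1)$ signature at a vertex forces some edges incident to that vertex to lie on no $g$-cycle and others to lie on exactly one, a contradiction.

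It remains to determine which graphs in the remaining exceptional families are arc-transitive. $K_4$, the Petersen graph, and the Dodecahedron are well-known to be arc-transitive. For the prism on $2n$ vertices with $n \geq 4$, a short $4$-cycle count shows that each rung edge lies on $2$ four-cycles while each cycle edge lies on only one when $n \geq 5$, so the two edge types fall into distinct orbits of $\Aut(\Gamma)$; only the case $n = 4$, which gives the cube $Q_3$, is edge-transitive (and indeed arc-transitive). An analogous $4$-cycle count at a single vertex of the Möbius ladder $\Cay(\ZZ_{2n}; \{\pm 1, n\})$ shows that for $n \geq 4$ the edge corresponding to the generator $n$ lies in strictly more $4$-cycles than the other two incident edges, breaking edge-transitivity; only $n = 3$ survives, and this yields $K_{3,3}$. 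Collecting these cases produces exactly the five graphs listed in the statement.

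The main obstacle is the small case analysis distinguishing the two edge orbits in prisms and Möbius ladders outside of the sporadic arc-transitive values of $n$; everything else is a direct application of Lemma \ref{lem:girthregular} together with the edge-transitivity argument above.
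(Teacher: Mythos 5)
Your proof is correct and follows exactly the route the paper intends: the paper gives no written proof of this lemma, remarking only that it ``follows as a corollary of Theorem 5 of \cite{signature}'' (i.e.\ Lemma~\ref{lem:girthregular}), and your argument supplies precisely the missing details --- ruling out the $(0,1,1)$ signature by edge-transitivity and eliminating the prisms and M\"obius ladders other than $Q_3$ and $K_{3,3}$ by counting $4$-cycles on the two edge types.
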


\begin{lemma}
\label{lem:smallgirth2}
\cite[Proposition 4.2]{girth7}
If $\Gamma$ is a cubic arc-transitive graph of girth $6$ then every edge of $\Gamma$ lies in exactly $2$ cycles of length $6$, or $\Gamma$ is isomorphic to one of the following graphs: the Heawood graph, the Pappus graph or the generalised Petersen graph $\rm{GP}(i,3)$ with $i = 8,10$.
\end{lemma}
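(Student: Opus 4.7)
The plan is to use arc-transitivity to reduce the problem to a local count of $6$-cycles through a single edge, establish an upper bound on this count, and then argue that exceeding the threshold forces a global structure rigid enough to pin $\Gamma$ down.

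Since $\Aut(\Gamma)$ acts transitively on arcs (hence on edges), the number $c := \epsilon_6(e)$ is independent of the edge $e$. To bound $c$, fix an edge $uv$ and let $a_1, a_2$ be the other neighbours of $u$ and $b_1, b_2$ those of $v$. Because the girth is $6$, the six vertices $u, v, a_1, a_2, b_1, b_2$ are distinct and the only edges among them are $uv, u a_1, u a_2, v b_1, v b_2$. Every $6$-cycle through $uv$ has the form $(u, a_i, x, y, b_j, v)$ with $x \sim y$, $x$ a neighbour of $a_i$ other than $u$, and $y$ a neighbour of $b_j$ other than $v$; the girth-$6$ hypothesis forces $x, y \notin \{u, v, a_1, a_2, b_1, b_2\}$. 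For any fixed pair $(i,j)$, at most one such cycle can exist, since two of them would yield a $4$-cycle among the second neighbours of $a_i$ (or of $b_j$). Therefore $c \le 4$.

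If $c \le 2$ we are in the first alternative, so assume $c \in \{3, 4\}$. Arc-transitivity implies that the edge-stabiliser of $\{u,v\}$ acts on the four pairs $(a_i, b_j)$: when $\Gamma$ is $2$-arc-transitive this action is transitive, forcing $c \in \{0, 4\}$ and hence $c = 4$; when $\Gamma$ is arc-transitive but not $2$-arc-transitive, the orbits on the four pairs have sizes $1, 1, 2$, still permitting $c = 3$ but only through a very restricted local pattern. In either case the second-neighbour adjacencies at $uv$ are completely prescribed by $c$ and by the symmetry, so the ball of radius $2$ around every edge is determined up to isomorphism.

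This local rigidity is the crux. Propagating the forced second-neighbour adjacencies around neighbouring edges, and repeatedly using the fact that two second-neighbours cannot coincide without creating a cycle shorter than $6$, one obtains a bound on the diameter and hence on $|\V(\Gamma)|$ (concretely, $|\V(\Gamma)| \le 20$). Inspecting the known cubic arc-transitive graphs of girth exactly $6$ and order at most $20$ (e.g.\ via the Foster census of cubic symmetric graphs) leaves exactly the four exceptional graphs: the Heawood graph, the M\"obius--Kantor graph $\mathrm{GP}(8,3)$, the Pappus graph, and the Desargues graph $\mathrm{GP}(10,3)$. The main obstacle I anticipate is this final step: converting the local rigidity imposed by $c \ge 3$ into a clean global order bound purely combinatorially; the cleanest route is probably to combine the local count with the classification of cubic symmetric graphs of small girth, which is essentially the route taken in the original source \cite{girth7}.
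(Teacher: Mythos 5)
First, a point of comparison: the paper does not prove this lemma at all --- it is imported verbatim from Feng and Nedela \cite[Proposition 4.2]{girth7} (the paper explicitly says ``Lemma \ref{lem:smallgirth2} is proved in \cite{girth7}''), so there is no internal proof to measure your attempt against; it has to stand on its own. It does not. The quantitative core of your argument --- ``for any fixed pair $(i,j)$, at most one such cycle can exist, since two of them would yield a $4$-cycle'' --- is false. Two $6$-cycles $(u,a_i,x,y,b_j,v)$ and $(u,a_i,x',y',b_j,v)$ with $x\neq x'$ \emph{and} $y\neq y'$ coexist without creating any cycle of length less than $6$: the configuration they force is the $6$-cycle $(a_i,x,y,b_j,y',x')$, not a $4$-cycle. (A $4$-cycle arises only when exactly one of the two coordinates coincides.) Since $a_i$ and $b_j$ each have two neighbours outside $\{u,v\}$, up to a perfect matching between these two pairs of second neighbours is possible, so the correct local bound is \emph{two} $6$-cycles per pair $(i,j)$, i.e.\ $c\leq 8$, not $c\leq 4$. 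The Heawood graph shows this is sharp: it has $28$ hexagons on $21$ edges, so every edge lies on exactly $28\cdot 6/21=8$ of them. Your bound therefore excludes one of the four exceptional graphs the lemma is supposed to produce, and the subsequent case split ``$c\in\{3,4\}$'' omits the cases $c\in\{5,6,7,8\}$ that actually occur.

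Second, even granting a corrected bound, the step that does all the work --- propagating the local adjacency pattern to conclude $|\V(\Gamma)|\leq 20$ and then invoking the Foster census --- is asserted rather than proved, and you acknowledge as much. This is not a cosmetic gap: a cubic arc-transitive graph of girth $6$ can be arbitrarily large (e.g.\ the graphs $\Gamma_{22}(m;2,1)$ in this very paper are arc-transitive hexagonal toroidal maps of girth $6$ for large $m$), so any order bound must be derived \emph{from} the hypothesis $c\geq 3$, and the derivation is exactly the content of the lemma. The argument sketched in the middle paragraph (orbits of the edge-stabiliser on the four pairs $(a_i,b_j)$) is also not airtight: for a $2$-arc-transitive graph the edge-stabiliser is transitive on $\{a_1,a_2\}$ and on $\{b_1,b_2\}$ separately, but its orbits on the four \emph{pairs} can have sizes $2+2$, so $c\in\{0,4\}$ does not follow. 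A workable elementary route would be to fix the corrected local count, show that $c\geq 3$ forces enough coincidences among vertices at distance $3$ from the edge to bound the order, and only then consult the census; as written, the proposal establishes neither the bound on $c$ nor the bound on the order.
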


Before we state and prove Lemma \ref{lem:loop}, we need to look briefly into the set of exceptional graphs $E$ and the possible dart-labelled graphs from which they arise. This will also be used in the proofs of Claims \ref{cl:ex} and \ref{cl:noVT}. For each graph $\Gamma$ in $E$, we have determined, with the aid of a computer programme written in SAGE \cite{sage}, the values of $i\in\{1,\ldots,25\}$ for which $\Gamma$ is a cyclic generalised cover of $\Delta_i$. For instance, $K_4$ is a cyclic generalised cover of $\Delta_1$ and $\Delta_3$ (and of no other $\Delta_i$). Indeed, $K_4$ is isomorphic to $\Gamma_1(4;1)$ and to $\Gamma_3(2;1)$. The results are displayed in Table \ref{tab:quo} below.

\begin{table}[H]
\begin{center}
\begin{tabular}{l l l l}
\hline
Graph & Cover of $\Delta_i$ & Graph & Cover of $\Delta_i$\\
\hline
$K_4$ & $i = 1,3$ &  $\textrm{GP}(8,3)$ & $i = 2,6$\\
$K_{3,3}$ & $i= 1,4,5,16$ \qquad \qquad \qquad & Pappus & $i = 5,22$\\
$Q_3$ & $i = 2,4,6$ & $\textrm{GP}(10,2)$ & $i =2$\\
Petersen & $i = 2,19,21$ & $\textrm{GP}(10,3)$ & $i=2$\\
Heawood &  $i = 4,13,18$ & & \\
\hline
\end{tabular}
\caption{The exceptional graphs in $E$ and the values of $i$ for which they are a ccv-cover of $\Delta_i$}
\label{tab:quo}
\end{center} 
\end{table}

\begin{remark}
\label{rem:E}
Each graph in $E$ is a ccv-cover of $\Delta_i$ for some $i \in I_M$, and thus is a $k$-multicirculant for some $i \in \{1,2,3\}$.
\end{remark}

\begin{remark}
\label{rem:label3}
Let $(\Delta, \lambda, \iota, \zeta)$ be a ccv-graph with an edge of type $[1,3]$ or $[2,3]$. If $\Gamma:=\Cyc(\Delta, \lambda, \iota, \zeta)$ is vertex-transitive, then it is arc-transitive. Indeed, suppose $\lambda(x)=3$ for some $x \in \D(\Delta)$, and let $u = \beg x$. Then all three darts in $\fib(x)$ beginning at a fixed vertex $u_i \in \fib(u)$ belong to the same orbit under the action of $\Aut(\Gamma)$ (recall that $\Aut(\Gamma)$ contains a copy of $\ZZ_n$, with $n = \lcm\{\iota(\beg x)\lambda(x) \mid x \in \D(\Delta)\}$, that acts transitively on the fibres of $\Gamma$). Since $\Gamma$ is vertex-transitive, this must be true for all vertices. This is, all the three darts beginning at any vertex belong to the same $\Aut(\Gamma)$-orbit. Thus $\Gamma$ is edge-transitive. Finally, it is well known that graph with odd valency that is both vertex- and edge-transitive must also be arc-transitive.  
\end{remark}

\begin{lemma}
\label{lem:loop}
Let $(\Delta, \lambda, \iota, \zeta)$ be a $\ccv$-graph. Let $u,v \in \V(\Delta)$ be adjacent through a $[1,i]$-edge, $i \in \{2,3\}$ and suppose $u$ is incident to a loop. If $\Gamma:=\Cyc(\Delta, \lambda, \iota, \zeta)$ is vertex-transitive then it isomorphic to the Heawood graph or the Pappus graph.
\end{lemma}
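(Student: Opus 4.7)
The plan is to pin down the local structure at $u$, produce a short cycle through every loop edge of $\Gamma$ by lifting a carefully chosen closed walk of $\Delta$, and then invoke the girth/signature dichotomies of Lemmas~\ref{lem:girthregular}, \ref{lem:smallgirth1}, and \ref{lem:smallgirth2}.

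First, by Proposition~\ref{prop:consistentccv}(2) the two darts $\ell,\ell^{-1}$ underlying the loop at $u$ both carry label $1$, so the loop accounts for valence $2$ at $u$. The dart $x$ from $u$ to $v$ is the ``$1$-side'' of the $[1,i]$-edge, so $\lambda(x)=1$, $\lambda(x^{-1})=i$, and $u$ is incident to no other darts of $\Delta$. Writing $\zeta(\ell)=r$ and, after $T$-normalising along a spanning tree containing $x$ (Theorem~\ref{theo:tnormalised}), $\zeta(x)=0$, the compatibility relation \eqref{eq:ratio} forces $\iota(u)=i\cdot\iota(v)$. If $i=3$, Remark~\ref{rem:label3} upgrades the hypothesised vertex-transitivity of $\Gamma$ to arc-transitivity, which will be used later.

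Next, I would exhibit a reduced closed walk of length six through any loop edge of $\Gamma$ by lifting the $\Delta$-walk $(\ell,x,x^{-1},\ell^{-1},x,x^{-1})$, choosing each lift of $x^{-1}$ so as to land on a preimage of $u$ distinct from the one just visited; by Lemma~\ref{lem:adjacency} this is always possible. The resulting walk, starting at $u_j$, has the form
\[
u_j\to u_{j+r}\to v_{(j+r)\bmod\iota(v)}\to u_{j+r+\iota(v)}\to u_{j+\iota(v)}\to v_{j\bmod\iota(v)}\to u_j,
\]
which is a genuine $6$-cycle provided the six listed vertices are distinct. Distinctness fails only when $r\equiv 0\pmod{\iota(v)}$, and in that degenerate case a short case analysis (using Lemma~\ref{lem:simplified}(4) to exclude the forbidden values $r\in\{0,\iota(u)/2\}$ and Theorem~\ref{theo:concyc} to enforce connectivity) reduces $\Gamma$ to a handful of very small candidates that can be dispatched directly. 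Otherwise, $\mathrm{girth}(\Gamma)\le 6$.

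To finish I combine girth-regularity of $\Gamma$ (automatic for vertex-transitive cubic graphs) with the cited lemmas. If $\mathrm{girth}(\Gamma)\le 5$, Lemma~\ref{lem:girthregular} (together with Lemma~\ref{lem:smallgirth1} in the $i=3$ case where arc-transitivity has already been secured) leaves only a short list of candidates or forces a $(0,1,1)$-signature; the latter is eliminated because the lifting construction above shows that each of the three edges at $u_j$ lies on a strictly positive number of shortest cycles. If $\mathrm{girth}(\Gamma)=6$, Lemma~\ref{lem:smallgirth2} leaves two alternatives: either every edge lies in exactly two $6$-cycles, or $\Gamma\in\{\text{Heawood},\,\text{Pappus},\,\mathrm{GP}(8,3),\,\mathrm{GP}(10,3)\}$. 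By choosing different non-zero shifts modulo $\iota(v)$ for the two $x^{-1}$-lifts in the previous construction, one obtains strictly more than two $6$-cycles through each loop edge, eliminating the first alternative; cross-referencing with Table~\ref{tab:quo} and checking that $\mathrm{GP}(8,3)$ and $\mathrm{GP}(10,3)$ do not admit a cyclic quotient with a loop at the prescribed vertex then leaves only the Heawood and Pappus graphs. The main obstacle will be the careful bookkeeping of the small-girth case, and the precise verification that $\mathrm{GP}(8,3)$ and $\mathrm{GP}(10,3)$ cannot arise from a $\Delta$ of this shape.
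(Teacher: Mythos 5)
Your setup (the labels at $u$, the relation $\iota(u)=i\cdot\iota(v)$, and the lifted walk $(\ell,x,x^{-1},\ell^{-1},x,x^{-1})$ producing $6$-cycles through the loop edges) matches the paper's, but the argument has two genuine gaps, both concentrated in the case $i=2$. First, you secure arc-transitivity only when $i=3$ (via Remark~\ref{rem:label3}), yet every tool you invoke afterwards needs it for $i=2$ as well: Lemmas~\ref{lem:smallgirth1} and~\ref{lem:smallgirth2} are statements about arc-transitive graphs, and the $(0,1,1)$ alternative in Lemma~\ref{lem:girthregular} is impossible for arc-transitive graphs but entirely possible for merely vertex-transitive ones. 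Your attempt to exclude the $(0,1,1)$ signature by exhibiting cycles of length $6$ is a non sequitur when the girth is at most $5$: a $6$-cycle through an edge says nothing about how many girth-cycles pass through it. The paper closes exactly this hole with a separate dart-orbit counting argument in the $[1,2]$ case, showing that the two darts of $\fib(x)$ emanating from a vertex of $\fib(v)$ (where $\lambda(x)=2$) lie in one $\Aut(\Gamma)$-orbit, that every dart shares its orbit with its inverse, and hence that all three darts at a vertex are equivalent; this is the step your proposal is missing.

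Second, your $6$-cycle count in the girth-$6$ case does not hold up. In the walk $(\ell,x,x^{-1},\ell^{-1},x,x^{-1})$ only the first lift of $x^{-1}$ is free; the second is forced by the requirement that the walk close up at $u_j$. For $i=2$ the first lift has a single non-returning choice, so the construction yields exactly one such $6$-cycle through each loop edge, and for $i=3$ it yields two --- never ``strictly more than two''. The paper instead counts $6$-cycles through the spoke $v_0u_0$: the two cycles $C_{0,\pm1}$ pass through both $v_0u_0$ and $v_0u_k$, and arc-transitivity forces a $6$-cycle through the third edge at $v_0$ (an edge your argument never uses), which must revisit one of the other two; this pigeonhole is what produces an edge lying on at least three $6$-cycles. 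Finally, be aware that your degenerate cases cannot be ``dispatched directly'' so as to leave only the Heawood and Pappus graphs: for $i=3$ with $r\equiv 0\pmod{\iota(v)}$ one obtains $K_4$, for $\iota(v)\le 2$ one obtains $K_4$ or $Q_3$, and ${\rm GP}(8,3)$ is itself a ccv-cover of $\Delta_6$ (Table~\ref{tab:quo}); the paper's own proof concedes these outcomes, and the conclusion actually used downstream (Claim~\ref{cl:ex}) is only that $\Gamma$ lies in the exceptional set $E$.
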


\begin{proof}
First, suppose $uv$ is a $[1,2]$-edge consisting of two darts: $x$ beginning at $v$, and $x^{-1}$ beginning at $u$. Let $\{y,y^{-1}\}$ be a loop incident at $u$. Let $m$ be the number of $\Aut(\Gamma)$-orbits on darts. Since $\lambda(x) = 2$, then for every vertex $u_i \in \fib(u)$ there are precisely two darts in $\fib(x)$ incident to $u_i$. These two darts must belong to the same $\Aut(\Gamma)$-orbit, since $\Gamma$ has a cyclic subgroup of automorphism acting transitively in each fibre. Since $\Gamma$ is vertex transitive, the orbits of $\Aut(\Gamma)$ on darts are precisely the orbits of $\Aut(\Gamma)_{u_i}$, the stabiliser of $u_i$ in $\Aut(\Gamma)$, on its action on the darts beginning at $u_i$. Then, $\Aut(\Gamma)$ has at most $2$ orbits on darts. Suppose $m=2$. Then $\Aut(\Gamma)$ has two orbits on darts, say $O_1$ and $O_2$, and since $\Gamma$ is vertex-transitive, every vertex of $\Gamma$ is incident to precisely one dart in $O_1$ and two darts in $O_2$. Hence $|O_2| = 2\cdot|O_1|$. Moreover, for every dart $z \in \D(\Gamma)$ both $z$ and its inverse $z^{-1}$ belong to the same $\Aut(\Gamma)$-orbit (for otherwise every edge of $\Gamma$ has one dart in $O_1$ and the other in $O_2$, which implies that $|O_1|=|O_2|$, a contradiction). In particular, this implies that both $\fib(y)$ and $\fib(y^{-1})$ are subsets of $O_2$. On the other hand, $\fib(x) \subseteq O_2$ as $\lambda(x)=2$, and so $\fib(x^{-1}) \subseteq O_2$. Then, all three darts incident to a vertex $\bar{u} \in \fib(u)$ lie in the same $\Aut(\Gamma)$-orbit, contradicting that $m=2$. Therefore $m=1$ and $\Gamma$ is arc-transitive.

Let $k = \iota(v)$ (that is, $k = |\fib(v)|$) and recall that $x$ is the dart beginning at $v$ and ending at $u$. Since $\lambda(x)=2$ there must be another dart, say $z$, incident to $v$. Since $uv$ is a $[1,2]$-edge, we see that $\iota(u) = 2k$. Recall that $\zeta(x) = 0$, as $uv$ is a $[1,2]$-edge (see Lemma~\ref{lem:simplified}), and let $\zeta(y)=r$. Notice that for all $i \in \ZZ_{2k}$ and all $j \in \{-1,1\}$ 
\begin{equation}
\label{eq:cycle}
C_{i,j}:=(v_i,u_i,u_{i+jr},v_{i+jr},u_{i+jr +k},u_{i+k}) 
\end{equation}
is a $6$-cycle in $\Gamma$ (see Figure \ref{fig:loop}, left). In particular $v_0u_0$ lies in $2$ distinct $6$-cycles, namely $C_{0,1}$ and $C_{0,-1}$. Furthermore, $v_0u_k$ also lies on both $C_{0,1}$ and $C_{0,-1}$. Since $\Gamma$ is arc-transitive, $z_0 \in \fib(z)$ must lie in a $6$-cycle $C$. It is plain to see that $C$ must visit either $v_0u_0$ or $v_0u_m$. This is, one of the edges incident to $v_0$ lies in at least $3$ distinct $6$-cycles and by Lemma \ref{lem:smallgirth2}, $\Gamma$ is isomorphic to the Heawood graph, the Pappus graph or the generalised Petersen graph $\rm{GP}(i,3)$ with $i = 8,10$. However the generalised Petersen graphs $\rm{GP}(i,3)$ with $i \in \{8,10\}$ are ccv-covers of $\Delta_i$ only when $i=2$ (see Table \ref{tab:quo}) and $\Delta_2$ has no edges of type $[1,2]$. Therefore $\Gamma$ must be isomorphic to the Pappus graph or the Heawood graph.

Now, suppose $uv$ is a $[1,3]$-edge. Then, $\Gamma$ is arc-transitive. If $\iota(v)$ equals $1$ or $2$, $\Gamma$ is isomorphic to the $K_4$ or the cube graph $Q_3$ respectively (see Figure \ref{fig:examples}). If $\iota(v) \geq 3$, the edge $u_0v_0$ lies in $4$ distinct $6$-cycles (see Figure \ref{fig:loop}, right) and the result follows from Lemma \ref{lem:smallgirth2}.
\end{proof}

\begin{figure}[h!]
\includegraphics[width=0.6\textwidth]{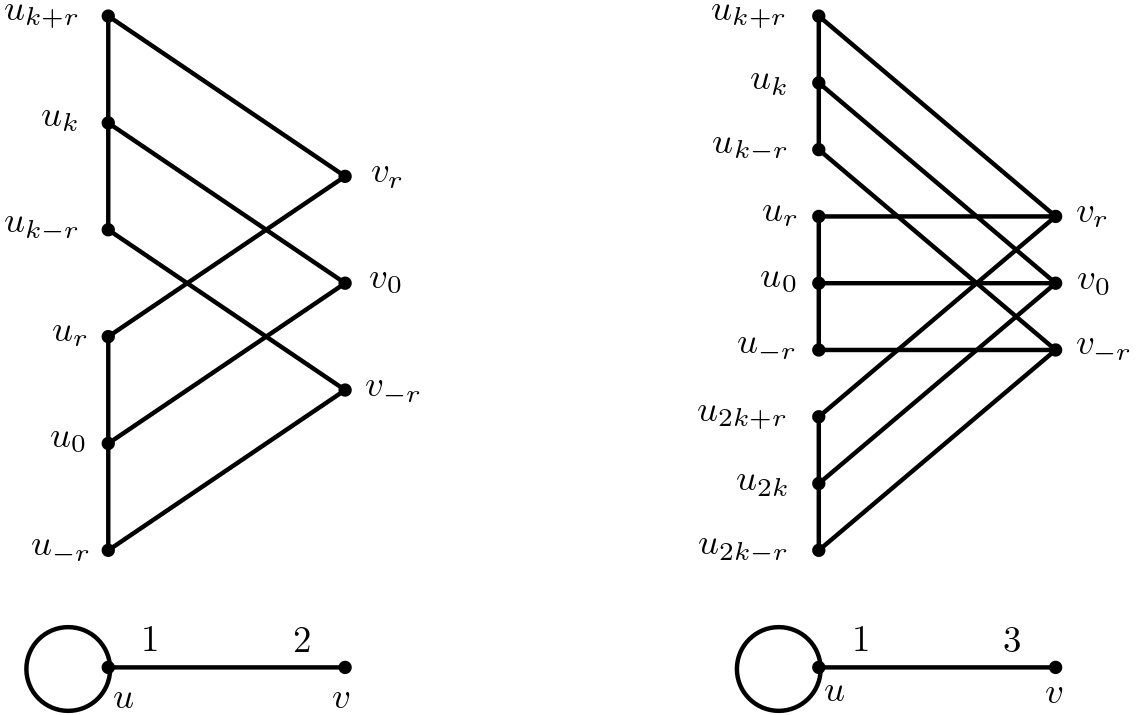}
\caption{A subgraph of $\Gamma$ with vertices in $\fib(u) \cup \fib(v)$ for each of the two cases considered in the proof of Lemma \ref{lem:loop}. Underneath each subgraph, the subgraph of $(\Delta,\lambda)$ to which it projects.}
\label{fig:loop}
\end{figure}

\begin{figure}[h!]
\centering
\includegraphics[width=0.6\textwidth]{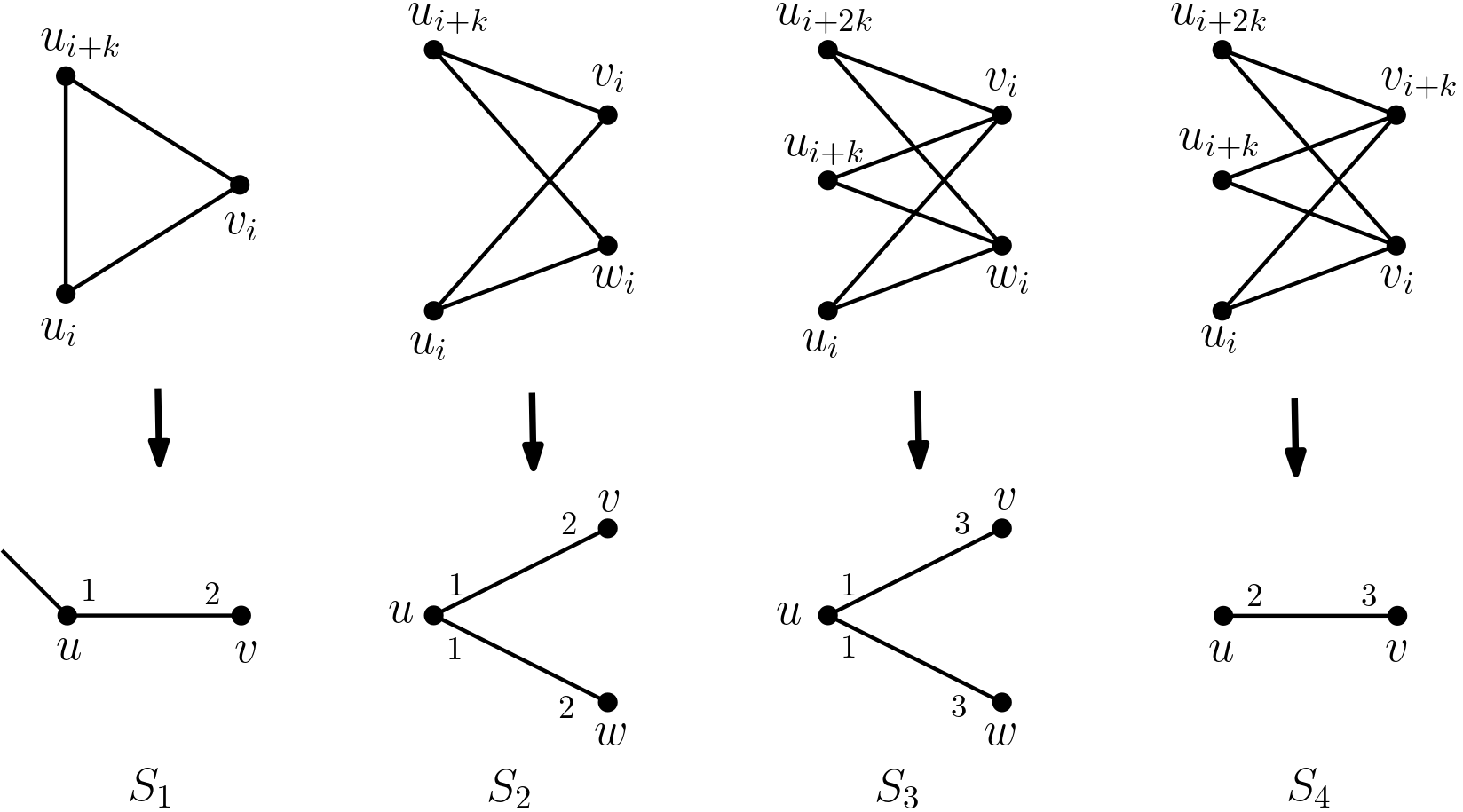}
\caption{In the bottom row, the four subgraphs $S_i$, $i \in \{1,2,3,4\}$. Above each, a connected component of $\pi^{-1}(S_i)$. In the first three cases we assume $\iota(v)=k$ while in the fourth we take $\iota(v)=2k$.}
\label{fig:cycles}
\end{figure}

Let $\Delta$ and $\Delta'$ be two graphs and let $\lambda$ and $\lambda'$ be two dart labellings for $\Delta$ and $\Delta'$ respectively. We say the dart labelled graphs $(\Delta,\lambda)$ and $(\Delta',\lambda')$ are isomorphic if is there is a graph isomorphism $\phi:\Delta \to \Delta'$ such that $\lambda(x)=\lambda'(\phi(x))$. 

For $i \in \{1,2,3,4\}$, let $S_i$ be the corresponding dart-labelled graphs shown in the bottom row of Figure \ref{fig:cycles}, so $S_1$ is a a $[1,2]$ edge with a loop attached to one of its endpoints and so on. Recall that if $(\Delta, \lambda,\iota,\zeta)$ is a ccv-graph and $\Gamma = \Cyc(\Delta, \lambda,\iota,\zeta)$, then the covering projection $\pi$ maps every $x_i \in \fib(x)$ to $x$, for $x \in \D(\Delta) \cup \V(\Delta)$. In addition, recall that if $(\Delta,\lambda)$ and $(\Delta',\lambda')$ are dart-labelled graphs, then say $(\Delta',\lambda')$ is a labelled-subgraph of $(\Delta,\lambda)$  (and we write $(\Delta',\lambda')<(\Delta,\lambda)$) if $\Delta'$ is a subgraph of $\Delta$ and $\lambda'=\lambda\mid_{\D(\Delta')}$.

\begin{lemma}
\label{lem:cycles}
Let $(\Delta, \lambda,\iota,\zeta)$ be a ccv-graph and let $\Gamma = \Cyc(\Delta, \lambda,\iota,\zeta)$. Let $(\Delta',\lambda') \leq (\Delta,\lambda)$ and suppose $(\Delta',\lambda') \cong S_i$ for some $i \in \{1,2,3,4\}$.
\begin{enumerate}
\item If $i =1 $, then $\pi^{-1}(\Delta',\lambda')$ is a union of $3$-cycles.
\item If $i =2$, then $\pi^{-1}(\Delta',\lambda')$ is a union of $4$-cycles.
\item If $i \in \{3,4\}$ then $\pi^{-1}(\Delta',\lambda')$ is a union of copies of $K_{3,2}$.
\end{enumerate}
\end{lemma}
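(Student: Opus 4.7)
The plan is to verify each of the three claims by a direct computation of $\pi^{-1}(\Delta',\lambda')$ using the adjacency rule of Lemma \ref{lem:adjacency}. Since $(\Delta',\lambda')\cong S_i$, the restriction of $\iota$ to $\V(\Delta')$ is determined, via the ratio identity (\ref{eq:ratio}), by the single scalar $k=\iota(v)$ (or $2k$ in case $i=4$), and by Lemma \ref{lem:simplified} the voltages on $\D(\Delta')$ are essentially forced: semi-edge darts carry voltage $\iota(\beg x)/2$, darts on $[i,j]$-edges with $i\neq j$ belonging to a spanning tree carry voltage $0$, and any loop dart carries a voltage $r\in\{1,\dots,\iota(\beg x)-1\}\setminus\{\iota(\beg x)/2\}$. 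The task thus reduces to a finite check for each of the four values of $i$.

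For each case I would enumerate the vertex set of $\pi^{-1}(\Delta',\lambda')$ as $\bigcup_{v\in\V(\Delta')}\fib(v)$, fix a typical vertex $v_j$ in one of the fibres, and use Lemma \ref{lem:adjacency} to compute, dart by dart of $\Delta'$, the neighbours of $v_j$ lying in $\pi^{-1}(\Delta',\lambda')$. For $i=1$ this adjacency pattern traces out a triangle through $v_j$; for $i=2$ it traces out a 4-cycle; and for $i\in\{3,4\}$ it exhibits one part of a $K_{3,2}$, with the complementary part read off by a single further step. Since the cyclic group $\ZZ_n$ acts by automorphisms of $\Gamma$ permuting each fibre transitively (Lemma \ref{lem:fiberperm}) and preserving $\pi^{-1}(\Delta',\lambda')$ setwise, every vertex of $\pi^{-1}(\Delta',\lambda')$ lies in a subgraph of the same isomorphism type, so $\pi^{-1}(\Delta',\lambda')$ is a disjoint union of copies of the claimed graph.

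The main obstacle, which is bookkeeping rather than conceptual, is the identification of the connected components in cases $i\in\{3,4\}$, where the two parts of $K_{3,2}$ have unequal sizes $2$ and $3$. To streamline this I would first invoke Remark \ref{rem:reduced} and replace every voltage on $\D(\Delta')$ by its reduction modulo the $\gcd$ of the indices of its endpoints, which makes the congruences appearing in Lemma \ref{lem:adjacency} as transparent as possible. The case $i=4$ is the subtlest since $\iota(v)=2k$ rather than $k$: the fibre $\fib(v)$ splits into two classes modulo a suitable $\gcd$, and one has to confirm that each class, together with its counterpart in the other fibre, spans its own copy of $K_{3,2}$. Once the indexing has been organised in this way, the verifications in all four cases become routine case-by-case checks.
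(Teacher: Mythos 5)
Your proposal is correct and follows essentially the same route as the paper: the paper also computes $\pi^{-1}(\Delta',\lambda')$ directly from the adjacency rule of Lemma~\ref{lem:adjacency}, using the fact that the voltages on $\D(\Delta')$ are forced (trivial on $[i,j]$-edges with $i\neq j$, equal to $\iota(\beg x)/2$ on semi-edges), working out case $i=1$ explicitly and declaring the remaining cases analogous. Your additional appeal to the fibre-transitive $\ZZ_n$-action to justify that every component has the same isomorphism type is a reasonable way to make precise what the paper dismisses as "plain to see".
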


\begin{proof}
Suppose $S_i \cong (\Delta',\lambda')$. If $i=1$ then $(\Delta,\lambda)$ contains a $[1,2]$-edge, say $uv$, where $u$ is incident to a semi-edge. Recall that all $[i,j]$-edges with $i \neq j$ have trivial voltage in a ccv-graph. Then by Lemma~\ref{lem:adjacency}, every $v_i \in \fib(v)$ is adjacent to $u_i$ and $u_{i+k}$ where $k = \iota(v)$. Moreover, every $u_i \in \fib(u)$ is adjacent to $u_{i+k}$. Hence for all $i \in \ZZ_{\iota(v)}$, $(v_i,u_i,u_{i+k})$ is a $3$-cycle (see Figure~\ref{fig:cycles}). It is plain to see that $\pi^{-1}(\Delta',\lambda')$ is the union of all such $3$-cycles. The cases when $i$ equals $2$, $3$ and $4$ follow from an analogous argument.
\end{proof}


{\bf Proof of Claim \ref{cl:ex}.}
Let $ i \in I_E:=\{5,6,8,16,18,19,21\}$ and suppose $\Gamma$ is a vertex-transitive ccv-cover of $\Delta_i$. 
If $i \in \{5,6,18,19\}$ then by Lemma \ref{lem:loop} we have $\Gamma \in E$. If $i = 16$ then  by Lemma \ref{lem:cycles} the girth of $\Gamma$ is at most $4$ and by Remark \ref{rem:label3} $\Gamma$ is arc-transitive. Then, by Lemma \ref{lem:smallgirth1}, $\Gamma \in E$. If $i = 8$, then by Lemma \ref{lem:cycles} $\Gamma$ has $3$-signature $(\epsilon_1,\epsilon_2,\epsilon_3)$, where $\epsilon_1, \epsilon_2, \epsilon_3 > 0$, and by Lemma \ref{lem:girthregular} $\Gamma$ is isomorphic to $K_4$, which is in $E$. It remains to see what happens when $i = 21$.
Suppose $\Gamma$ is a vertex-transitive ccv-cover of $\Delta_{21}$. Then, by Theorem \ref{the:main}, $\Gamma$ is isomorphic to $\Gamma_{21}(m;r)$ for two integer $m,r>0$ such that $m$ is even and $\gcd(\frac{m}{2},r)=1$. With the notation of Figure \ref{fig:cases} (right), let $\iota(w)=m$ and let $g$ be the girth of $\Gamma_{21}(m;r)$. Observe that the vertex $u_0$ lies on two distinct $5$-cycles, namely $(w_0,u_0,v_0,v_m,u_m)$ and $(w_0,u_0,v_r,v_{r+m},u_m)$. This is, every edge incident to $u_0$ lies on a $5$-cycle. If $g=5$ then by Lemma \ref{lem:girthregular}, $\Gamma_{21}(m;r)$ is isomorphic to the Petersen graph or the dodecahedron, and we are done since both graphs are in $E$. Clearly, $g \neq 3$ as no vertex in $\fib(w)$ lies on a $3$-cycle (vertices in $\fib(w)$ are only adjacent to vertices in $\fib(u)$, which is an independent set). If $g=4$ then $w$ must lie on a $4$-cycle. It is not too difficult to see that this can only happen if $v_m = v_r$ and $v_0 = v_{r+m}$. However, $(u_0,v_m,v_0)$ is then a $3$-cycle, contradicting $g \neq 3$. We conclude that if $i = 21$, then $\Gamma \in E$. We have shown that if $i \in I_E$, then a vertex-transitive ccv-cover of $\Delta_i$ must be an exceptional graph belonging to $E$. 

\begin{figure}[h!]
\includegraphics[width=0.7\textwidth]{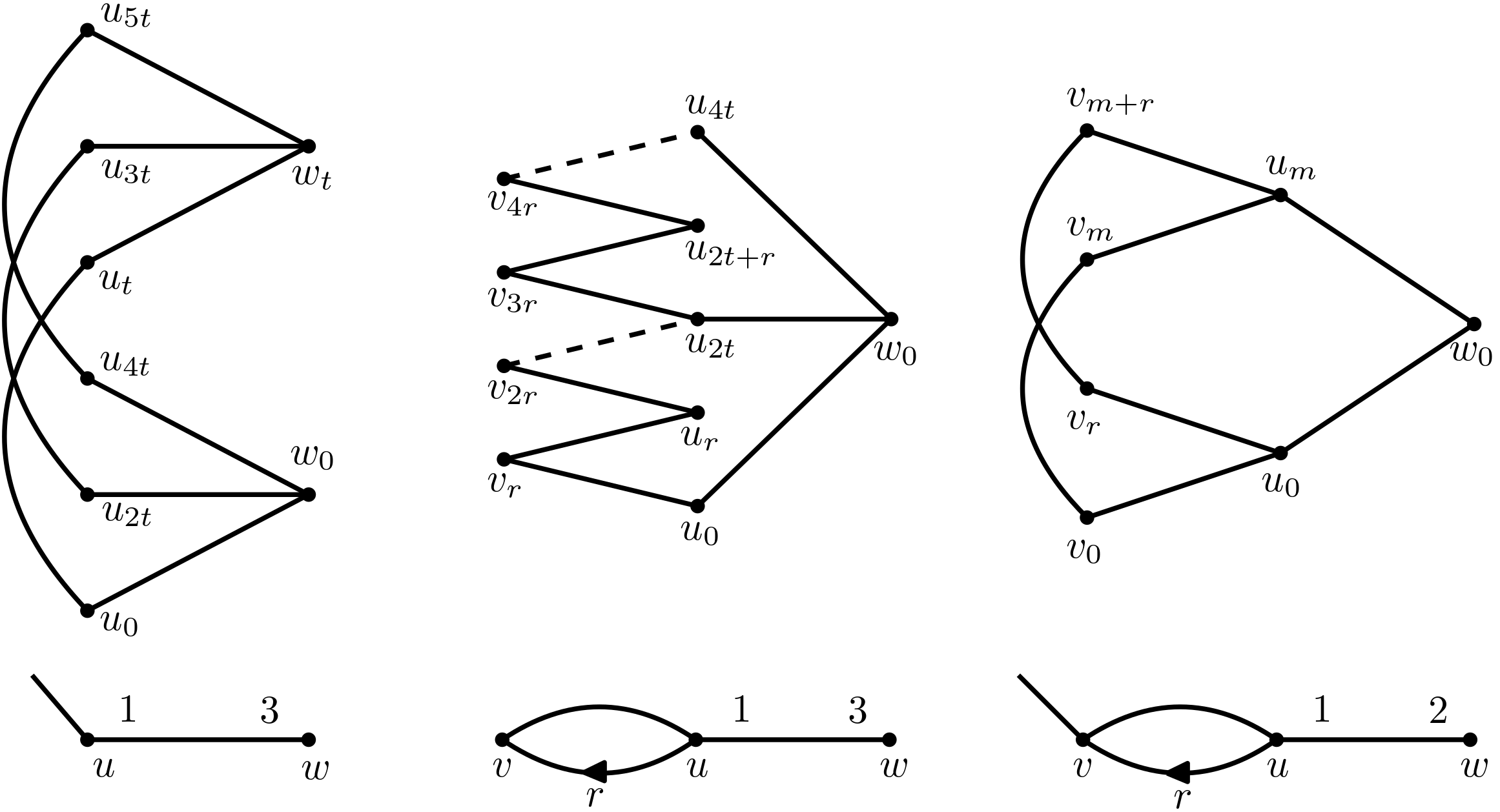}
\caption{On the top row, from left to write, a subgraph of $\Gamma_{13}(m;r)$, $\Gamma_{20}(m;r)$ and $\Gamma_{21}(m;r)$, respectively. Underneath each, the subgraph of $\Delta_i$ to which it projects.}
\label{fig:cases}
\end{figure}
\medskip

{\bf Proof of Claim \ref{cl:noVT}.}
Let $i \in I_C:=\{7,9,10,11,12,13,14,15,17,20\}$ and suppose $\Gamma$ is a vertex-transitive ccv-cover of $\Delta_i$. Since no element of $E$ is a ccv-cover of $\Delta_i$ for $i \in I_C$ (see Table \ref{tab:quo}) we see that $\Gamma \notin E$. Suppose $i \in \{7,14,15\}$. Then by Lemma \ref{lem:cycles}, the girth of $\Gamma$ is $3$ or $4$ and by Remark \ref{rem:label3}, $\Gamma$ is arc-transitive. It then follows from Lemma~\ref{lem:smallgirth1} that $\Gamma \in E$, a contradiction. If $i = 9$ then by Lemma \ref{lem:cycles}, $\Gamma$ has $3$-signature $(\epsilon_1,\epsilon_2,\epsilon_3)$, with $\epsilon_1,\epsilon_2,\epsilon_3, > 0$, and by Lemma \ref{lem:girthregular} we see that $\Gamma$ is isomorphic to $K_4$, which is a element of $E$, leading us again to a contradiction. If $i \in \{10,11\}$ then by Lemma~\ref{lem:cycles}, Remark~\ref{rem:label3} and Lemma~\ref{lem:smallgirth1} we see, once more, that $\Gamma \in E$. If $i = 17$, $\Gamma \in E$ by Lemma \ref{lem:loop}.
The cases when $i \in \{12,13,20\}$ require slightly deeper analysis. In the subsequent paragraphs, we will assume the vertices of the graphs $\Delta_i$, with $i \in \{12,13,20\}$, are named as follows: in Figure \ref{fig:quotients}, let $u$ be the leftmost vertex; $v$ the top vertex; $w$ the bottom vertex, so that $w$ is in fact the distinguished vertex of $\Delta_i$. 

Suppose $i =12$. Then $\Gamma \cong \Gamma_{12}(m;r)$ for some integers $m,r >0$ such that $m$ is even and $\gcd(m/2,r)=1$. Let $m = \iota(w)$. By Lemma \ref{lem:cycles} every vertex in $\fib(u)$ lies on a $3$-cycle. Then $v_0$ must also lie on a $3$-cycle $C$. Note that since a vertex $u_0$ is the only neighbour of $v_0$ in $\fib(u)$, all three vertices in $C$ must belong to $\fib(v)$. This is, $C=(v_0,v_r,v_{2r})$ or $C=(v_0,v_{-r},v_{-2r})$. Then $3r \equiv 0$ ($\mod 2m$). However, by item (1) of Lemma \ref{lem:simplified}, $r < \iota(\beg v) = 2m$ which implies that either $3r = 2m$ or $3r = 4m$. Moreover, since $\gcd(m/2,r) = 1$, then necessarily $r=4$ and $m=6$. The reader can verify that the graph $\Gamma_{12}(6,4)$ is not vertex-transitive.

Now, suppose $i = 13$. Then $\Gamma \cong \Gamma_{13}(m;r)$ for some integers $m,r >0$ such that $m$ is even and $\gcd(m/2,r)=1$. Let $\iota(w)= m = 2t$ and let $g$ be the girth of $\Gamma$. Note that $\Gamma$ must be arc-transitive as $\Delta_{13}$ has a $[1,3]$-edge. If $g < 6$ then by Lemma \ref{lem:smallgirth1},  $\Gamma \in E$, a contradiction. Suppose $g \geq 6$. Let $\Gamma' \leq  \Gamma$ be the subgraph induced by $\fib(u) \cup \fib(w)$. It is plain to see that $\Gamma'$ contains a $6$-cycle (see Figure \ref{fig:cases}, left) and thus $g=6$. Moreover, every edge incident to $w_i$, $i \in \ZZ_{\iota(w)}$, lies on two distinct $6$-cycles of $\Gamma'$. The edge $u_0u_{3t}$ also lies on two distinct $6$-cycles, both contained in $\Gamma'$. Since $\Gamma_{13}(m;r)$ is arc-transitive, the edge $u_0v_0$ must lie on a $6$-cycle $C$. However, $C$ must necessarily visit either $u_0w_0$ or $u_0u_{3t}$. This is, one of $u_0w_0$ or $u_0u_{3t}$ lies on three distinct $6$-cycles. It follows that the $6$-signature of $\Gamma$ is not $(2,2,2)$ and by Lemma \ref{lem:smallgirth2}, $\Gamma \in E$, a contradiction.

Finally, suppose $i = 20$ and $\Gamma$ is isomorphic to $\Gamma_{20}(m;r)$ where $m$ is even, $r \neq 0$ and $\gcd(m/2,r)=1$. Let $\iota(w)=m=2t$ and let $g$ be the girth of $\Gamma$. Observe that $\Gamma$ must be arc-transitive since $\Delta_{20}$ has a $[1,3]$-edge. If $g < 6$, then by Lemma \ref{lem:smallgirth2}, $\Gamma \in E$, a contradiction. Suppose $g \geq 6$ and see that if $r \equiv 0$ ($\mod 2t$) then $(w_0,u_0,v_r,u_r)$ is a $4$-cycle, contradicting our assumption that $g \geq 6$. Hence we may assume $r \not \equiv 0$ ($\mod 2t$). Now, note that $\iota(v) = \iota(u) = 3 \cdot \iota(w) = 6t$. Then the voltage of the semi-edge at $v$ must be $\iota(v) /2 = 3t$. This implies that each $v_i$ is adjacent to $v_{i+3t}$, and in particular that $(u_0,v_0,v_{3t},u_{3t},v_{3t+r},v_r)$ is a $6$-cycle of $\Gamma_{20}(m;r)$. Then $w_0$ must also lie on a $6$-cycle $C$. Since both $\fib(w)$ and $\fib(u)$ are independent sets, if a cycle visits $\fib(w)$ twice, it must have length at least $8$. It follows that $w_0$ is the only vertex in $C$ belonging to $\fib(w)$. It is straightforward to see that a $6$-cycle through $w_0$ must necessarily be $C_i=(w_0,u_{it},v_{it+r},u_{it+r},v_{it+2r},u_{it+2r})$, for some $i \in \{0,2,4\}$. This is, in Figure \ref{fig:cases} (middle), the dotted lines must be edges. This implies that $2r \equiv 0$ ($\mod 2t$). Moreover, since $0 < r < 3t$ we see that $2r = 2t$. Then the edge $u_0v_0$ lies on both $C_0$ and $C_4$, which are distinct cycles since $r \not \equiv 0$ ($\mod 2t$). Since $u_0v_0$ also lies in $C$, it follows from Lemma \ref{lem:smallgirth2} that $\Gamma \in E$, once more a contradiction.  
This completes the proof of Claim \ref{cl:noVT}.
\medskip

{\bf Proof of Theorem~\ref{the:vt}.}
Let $\Gamma$ be a simple, connected, cubic graph admitting a cyclic group of automorphism $G$ with at most $3$ orbits on vertices. Suppose $\Gamma$ is vertex-transitive. Then, $\Gamma$ is a ccv-cover of $\Delta_i$ for some $i \in \{1,\ldots,25\}$. By Claim \ref{cl:noVT}, $i \notin I_C$. If $i \in I_E$, then by Claim \ref{cl:ex}, $\Gamma$ must be one of the exceptional graphs in $E$. However, every graph in $E$ is either isomorphic to the Tutte-Coxeter graph or belongs to one of the five infinite families described in Theorem \ref{the:vt}. Indeed, $K_4 \cong \Gamma_i(4;1)$, $K_{3,3}\cong\Gamma_4(3;1,2)$, $Q_3\cong\Gamma_2(4;1,1)$, $\textrm{GP}(5,2)\cong\Gamma_2(5;2,1)$, the Heawood graph is isomorphic to $\Gamma_4(7;1,3)$, $\textrm{GP}(8,3)\cong\Gamma_2(8;3,1)$, the Pappus graph is isomorphic to $\Gamma_{22}(6;2,1)$ and $\textrm{GP}(10,3)\cong\Gamma_2(10;3,1)$. It remains to see what happens when $i \in I_M=\{1,2,3,4,22,23,24,25\}$.

If $i = 1$, then $\Gamma \cong \Gamma_1(m;r)$ for some integers $m$ and $r$ where $m \geq 4$, $m$ is even, $r \neq 0$ and $\gcd(\frac{m}{2},r)$ (see Theorem~\ref{the:main}). Observe that $\gcd(m,r) \in \{1,2\}$. If $\gcd(m,r) = 1$ then $r$ is a unit in the ring $\ZZ_m$. That is, $r$ has a multiplicative inverse $r^{-1}$ in $\ZZ_m$, and $\gcd(m,r^{-1})=1$. Then by Lemma~\ref{lem:coprimemulti}, $\Gamma_1(m;r) \cong \Gamma_1(m;1)$. On the other hand, if $\gcd(m,r)=2$, then $\frac{r}{2}$ is a unit in $\ZZ_m$ and, once again by Lemma~\ref{lem:coprimemulti}, $\Gamma_1(m;r) \cong \Gamma_1(m;2)$.

Suppose $i \in \{2,3,4\}$. Note that for integers $m$, $r$ and $s$, the graphs $\Gamma_2(m;r,s)$ and $\Gamma_3(m;r,s)$ correspond, respectively, to the graphs $I(m,r,s)$ and $H(m,r,s)$ described in \cite{bic}. It then follows from Proposition 4 of \cite{bic} and Theorem~\ref{the:main}, that if $i =2$ then $\Gamma \cong \Gamma_2(m;r,1)$, for some $m \geq 3$ and $r^2 \equiv \pm1$ ($\mod m$), or $m=10$ and $r=2$. By the same token, if $i=4$ then $\Gamma \cong \Gamma_4(m;r,s)$ where $m \geq 3$, $r \neq s$ and $\gcd(m,r,s)=1$. If $i=3$ then by \cite[Section 3]{bic}, $\Gamma$ is a circulant and so is also a ccv-cover of $\Delta_1$. 

Now, suppose $i \in \{22,23,24,25\}$. Observe that the graphs $\Gamma_{22}(m;r,s)$, $\Gamma_{23}(m;r,s)$, $\Gamma_{24}(m;r,s)$ and $\Gamma_{25}(m;r,s)$ correspond to the graphs $T_1(\frac{m}{2},r,s)$, $T_2(\frac{m}{2},r,s)$, $T_3(\frac{m}{2},r)$ and $T_4(\frac{m}{2},r,s)$ described in \cite{MPtricirc}. By Theorem $1$ of \cite{MPtricirc}, if $i = 22$ then $\Gamma$ is isomorphic to $\Gamma_{22}(m;2,1)$ with $\frac{m}{2}$ odd. By the same theorem if $i = 23$ then $\Gamma \cong \Gamma_{23}(m;r,1)$ with $\frac{m}{2} \equiv 1$ $(\mod 4)$ and $r = (\frac{m}{2}+3)/2$,  or $\frac{m}{2} \equiv 3$ $(\mod 4)$ and $r = (\frac{3m}{2}+3)/2$, or $m=2$ and $r=0$. If $i = 24$ then by Theorem~50 of \cite{MPtricirc} $\Gamma$ is also a cover of $\Delta_1$. Finally if $i = 25$ then by  Theorem 52 of \cite{MPtricirc} $\Gamma$ has order smaller than $54$ and by Section 3 of \cite{MPtricirc}, $\Gamma$ isomorphic to Tutte-Coxeter graph (referred to as Tutte's 8-cage in \cite{MPtricirc}).

For the converse, suppose $\Gamma$ is the Tutte-Coxeter graph or belongs to one of the $5$ infinite families described in Theorem~\ref{the:vt}. It is well known that the Tutte-Coxeter graph is arc-transitive and thus also vertex-transitive. A cover of $\Delta_1$ must be vertex-transitive as it has a cyclic group of automorphism with a single orbit on vertices. It is shown in Theorem 4 of \cite{bic}, that the graphs belonging to the families of items (2) and (3) of Theorem~\ref{the:vt} are vertex-transitive. Finally, the graphs in items (4) and (5) of Theorem~\ref{the:vt} are vertex-transitive by Theorem~1 of \cite{MPtricirc}. This completes the proof.

\end{document}